\documentclass[12pt,a4paper,twoside]{article}

\usepackage[a4paper,margin=1in]{geometry}
\usepackage{amsmath,amssymb,amsthm,mathtools}
\usepackage{mathrsfs}
\usepackage{booktabs}
\usepackage{enumitem}
\usepackage{xcolor}
\usepackage{appendix}
\usepackage[
  colorlinks=true,
  linkcolor=blue!60!black,
  citecolor=blue!60!black,
  urlcolor=blue!60!black
]{hyperref}
\allowdisplaybreaks
\numberwithin{equation}{section}
\newtheorem{theorem}{Theorem}[section]
\newtheorem{proposition}[theorem]{Proposition}
\newtheorem{lemma}[theorem]{Lemma}
\newtheorem{corollary}[theorem]{Corollary}
\newtheorem{definition}[theorem]{Definition}

\newtheorem{remark}[theorem]{Remark}

\newcommand{\D}{\mathbb D}
\newcommand{\E}{\mathbb E}

\newcommand{\Pp}{\mathbb P}

\newcommand{\CPS}{\mathrm{CPS}}
 \newcommand{\dd}{\,\mathrm{d}}

\renewcommand{\boxed}[1]{#1}

\title{Poissonian Pair Correlation for 
Bernoulli Orbits}

\author{Bo Tan, \ \  Zhen Zhang, \ \ Qing-Long Zhou}

\date{}

\begin{document}

\maketitle

\begin{abstract}
Let \(\alpha, \beta \in \mathbb{T} = \mathbb{R}/\mathbb{Z}\), and let \((\xi_n)_{n \ge 1}\) be independent random variables taking the values \(\alpha\) and \(\beta\) with equal probability. We study the random walk \[X_n = \xi_1 + \cdots + \xi_n \pmod 1\] and present  a deterministic Diophantine condition on \((\alpha, \beta)\) that implies almost sure Poisson pair correlation. To the best of our knowledge, this is the first result providing explicit examples in this framework where the increment measure is purely discrete. The classical Fourier contraction method fails in this setting; a different approach is required. Our proof uses the parabolic geometry of returns to \(1\) and an exact covariance formula for overlapping blocks.
\end{abstract}

\medskip

\noindent
\textbf{Keywords:}
Poissonian pair correlation; Bernoulli Orbits; Diophantine
approximation.

\medskip

\noindent
\textbf{2020 Mathematics Subject Classification:}
11K06, 11J83, 60G50.

\section{Introduction}
\subsection{Bernoulli orbits}

Let \(\mathbb{T}=\mathbb{R}/\mathbb{Z}\) denote the circle group. For \(\alpha,\beta\in\mathbb{T}\), a nonempty closed set \(K\subseteq\mathbb{T}\) is called an \(\alpha\beta\)-set if
\[
K\subset (K-\alpha)\cup(K-\beta),
\]
or equivalently, if for every \(x\in K\), either \(x+\alpha\in K\) or \(x+\beta\in K\). A sequence \((t_n)_{n\in\mathbb{N}}\) is called an \(\alpha\beta\)-orbit if
\[
t_{n+1}-t_n\in\{\alpha,\beta\},
\]
where all sums and differences are taken in \(\mathbb{T}\), i.e., modulo one.

The notions of \(\alpha\beta\)-sets and \(\alpha\beta\)-orbits were introduced by Engelking \cite{E61} and Katznelson \cite{K79}. The two notions are closely related; indeed, every \(\alpha\beta\)-orbit is readily seen to be an \(\alpha\beta\)-set. Katznelson proved that if \(\alpha\) and \(\beta\) are rationally independent modulo one, then there exists a nowhere dense \(\alpha\beta\)-set; moreover, for certain choices of \(\alpha, \beta\), such a set may have Hausdorff dimension zero.

Motivated by problems on affine embeddings of Cantor sets, Feng and Xiong \cite{FX18} showed that for rationally independent \(\alpha,\beta\), every \(\alpha\beta\)-set \(K\) satisfies either \(K-K=\mathbb{T}\) or \(K\) has nonempty interior; consequently, its box dimension satisfies
\[
\dim_{\mathrm{B}}K\ge \frac12.
\]
Subsequently, Yu \cite{Y19} extended these estimates to settings involving \(k\) algebraic parameters, and Baker \cite{B21} proved that if \(\alpha\) is Diophantine and \(\beta\) is Liouvillian, then every \(\alpha\beta\)-set is of box dimension one. These findings led to the conjecture, explicitly stated in Yu's work, that every \(\alpha\beta\)-set has box dimension one. However, Hochman \cite{H25b} recently showed that this is not the case, demonstrating instead that the Feng--Xiong lower bound \(1/2\) is sharp.

We focus on the statistical structure of \(\alpha\beta\)-orbits. To this end, we define the symbol space
\[
\Sigma=\bigl\{\boldsymbol{\omega}=(\omega_1,\omega_2,\dots):\omega_n\in\{0,1\}\text{ for }n\in\mathbb{N}\bigr\}.
\]
For \(\boldsymbol{\omega}=(\omega_1,\omega_2,\dots)\in\Sigma\), the Bernoulli \(\alpha\beta\)-orbit \(\{t_n(\boldsymbol{\omega})\}_{n\ge1}\) is defined by
\[
t_n(\boldsymbol{\omega})=\sum_{i=1}^n \xi_i(\boldsymbol{\omega}),
\]
where
\[
\xi_i(\boldsymbol{\omega})=
\begin{cases}
\alpha, & \omega_i=0,\\
\beta, & \omega_i=1.
\end{cases}
\]
Thus \(t_n(\boldsymbol{\omega})\) records the sum of the first \(n\) steps, each of which is either \(\alpha\) or \(\beta\), according to the symbol \(\omega_i\). We endow \(\Sigma\) with the product probability measure \(\mathbb{P}\) under which the random variables \(\{\xi_i\}_{i\in\mathbb{N}}\) are independent and identically distributed, with
\[
\mathbb{P}(\xi_i=\alpha)=\mathbb{P}(\xi_i=\beta)=\frac12.
\]
Thus each infinite sequence \(\boldsymbol{\omega}\in\Sigma\) corresponds to a random walk on \(\mathbb{T}\) with steps \(\alpha\) or \(\beta\), chosen independently with equal probability.

The study of Bernoulli \(\alpha\beta\)-orbits is related to the Collatz conjecture.
Recall the \(3x+1\) function \(T\colon\mathbb{N}\to\mathbb{N}\), defined by
\[
T(n)=
\begin{cases}
\frac{3n+1}{2}, & n\text{ odd},\\[6pt]
\frac{n}{2}, & n\text{ even}.
\end{cases}
\]
The Collatz conjecture asserts that for every positive integer \(n\), there exists some \(k\) such that \(T^{(k)}(n)=1\); despite extensive study, the conjecture remains open. Motivated by this conjecture, a substantial body of work has been devoted to the behavior of its iterates, including various statistical investigations.

For a fixed \(N\), Lagarias and Soundararajan \cite{LagariasSoundararajan2006} studied the first \(N\) iterates \(x_k=T^{(k)}(x_0)\), \(1\le k\le N\), and showed that, for most initial values \(x_0\), the finite sequence \((x_k)_{1\le k\le N}\) approximately satisfies Benford's law\footnote{Benford's law to base \(B\) for an infinite sequence \(\{x_k:k\ge1\}\) asserts that \(\{\log_B x_k:k\ge1\}\) is uniformly distributed modulo one.} to integer base \(B\). A key ingredient in their proof is the following dichotomy: the Bernoulli \(\alpha\beta\)-orbit \(\{t_n(\boldsymbol{\omega})\}\) is, with probability one, uniformly distributed modulo one (or equidistributed modulo one; see Section \ref{sub1.2} for the definition) if and only if at least one of \(\alpha\) and \(\beta\) is irrational. This criterion is fundamental, as it precisely characterizes when the additive structure of the random walk ensures equidistribution on the torus \(\mathbb{T}\), thereby linking the probabilistic dynamics to the logarithmic equidistribution required for Benford's law.

From the perspective of Hausdorff dimension, Chen, Wang and Wen \cite{ChenWangWen2021} refined this picture by showing that the exceptional set in the probability space where equidistribution fails actually has full Hausdorff dimension. Peng, Wu and Xu \cite{PengWuXu2024} strengthened this result by proving that certain non-dense sets associated with random \(\alpha\beta\)-orbits also have Hausdorff dimension one. This provides a finer quantitative description of the failure of equidistribution beyond the almost-sure setting.

Beyond global equidistribution and the size of exceptional sets, a more refined understanding of Bernoulli \(\alpha\beta\)-orbits requires the study of their fine-scale local statistics. In this paper, we further investigate such local properties, aiming to capture the microscopic gap structure and spacing distribution of the sequence \(\{t_n(\boldsymbol{\omega})\}_{n\ge1}\) on \(\mathbb{T}\).

\subsection{Equidistribution and Poissonian pair correlation}\label{sub1.2}

A real sequence \(\{x_n\}_{n\in\mathbb{N}}\) is said to be equidistributed (or uniformly distributed modulo one) if for every interval \([a,b]\subseteq[0,1)\),
\begin{equation*}
 \lim_{N\to\infty} \frac{\# \{1\le n\le N : x_n \bmod 1 \in [a,b]\}}{N}=b-a.
\end{equation*}
This concept, introduced by Weyl \cite{W16}, has attracted considerable attention due to its connections with number theory, fractal geometry, dynamical systems, and probability theory.
A fundamental result is Weyl's criterion \cite[Theorem 3.2]{KN74}, which implies, for instance,  that the orbit of every irrational rotation is equidistributed.   For a comprehensive treatment, we refer to \cite{B12,KN74}.

A finer-scale understanding of the distribution of a sequence is provided by the pair correlation statistics. For \(s>0\), the pair correlation function is defined by
\[
R_N(s,\{x_n\})
=\frac1N\#\left\{1\le m,n\le N:\ m\ne n,\ 
\|x_n-x_m\|\le \frac{s}{N}\right\},
\]
where \(\#\) denotes the cardinality of a finite set, and \(\|\cdot\|\) denotes the distance to the nearest integer. We say that \(\{x_n\}\) has Poissonian pair correlation (PPC for short) if, for every \(s>0\),
\begin{equation}\label{ppc}
\lim_{N\to\infty}R_N(s,\{x_n\})=2s.
\end{equation}
This notion has its origins in quantum physics and plays a central role in the Berry--Tabor conjecture; we refer to \cite{Marklof2001} for further background.

Like equidistribution, PPC may be viewed as a pseudorandomness property: indeed, an i.i.d. sequence of random variables uniformly distributed on \(\mathbb{T}\) satisfies \eqref{ppc} almost surely. Nevertheless, the two properties are fundamentally different in nature. Equidistribution is a large-scale statistic, in the sense that the test interval is fixed, whereas PPC is a local statistic, since the interval size shrinks proportionally to \(1/N\). The two notions are related: PPC implies equidistribution \cite{ALP18,GL17,S20}, while the converse is generally false. A standard example is the sequence \((n\alpha)_{n\ge1}\), which is equidistributed if and only if \(\alpha\notin\mathbb{Q}\), but fails to have PPC for every \(\alpha\); see \cite{HK21,PS19} for related results.

The study of pair correlations for monomial sequences of the form
\[
(\alpha n^\theta \bmod 1)_{n\ge1}, \quad \theta>0,\quad \alpha>0,
\]
has attracted considerable attention, yet verifying the PPC property under explicit conditions on \(\alpha\) and \(\theta\) has proved challenging. A notable case is the sequence \((n^2\alpha)_{n\ge1}\), which is conjectured to have PPC under mild Diophantine conditions on \(\alpha\); however, only partial results in this direction are known \cite{H10,M18,RSZ01,T10}. From the perspective of Lebesgue measure, Rudnick and Sarnak \cite{RudnickSarnak1998} initiated the metric theory by proving that for almost all \(\alpha\), the sequence \((a_n\alpha)_{n\ge1}\) has PPC whenever \((a_n)\) is a polynomial of degree at least two with integer coefficients. This framework was later extended, via additive-energy criteria, to both integer sequences \cite{ALL17} and real sequences \cite{AEM21}. For monomial sequences \((\alpha n^\theta \bmod 1)_{n\ge1}\) with \(\theta>0\) and \(\alpha>0\), explicit results have also been obtained: El-Baz, Marklof and Vinogradov \cite{EMV15} established PPC for \((\sqrt{n}\bmod 1)_{n\in\mathbb{N}\setminus\square}\), where \(\square\) denotes the set of perfect squares; Technau and Yesha \cite{TY23} proved PPC for \((n^\theta \bmod 1)_{n\ge1}\) for \(\theta>7\); and for sequences \((\alpha n^\theta \bmod 1)_{n\ge1}\) with \(\alpha>0\), the admissible range of \(\theta\) was extended from \(0<\theta<14/41\) \cite{LST25} to \(0<\theta<43/117\) \cite{RS24}. Lutsko and Technau \cite{LT26} showed that for every \(m\ge3\), the sequence \((\alpha n^\theta \bmod 1)_{n\ge1}\) has Poissonian \(m\)-correlations for all \(0<\theta<1/(m^2+m-1)\) and all \(\alpha>0\). Here, Poissonian \(m\)-correlations are a higher-order generalization of PPC: for \(m=2\) they reduce to the usual pair correlation condition. More recently, Hauke \cite{H25a} proved that for sequences \((a_n)_{n\in\mathbb{N}}\) arising from the set of rough numbers with explicit roughness parameters, and for any badly approximable \(\alpha\), the sequence \((a_n\alpha)_{n\in\mathbb{N}}\) has PPC.

Turning to the random setting, the natural probabilistic counterpart is a sequence of independent and identically distributed random variables uniformly distributed on the torus, which almost surely has PPC. However, for random walks on the torus with discrete step distributions---such as the Bernoulli \(\alpha\beta\)-orbits considered in this paper---the situation is far more subtle.

\subsection{Exceptional points and Diophantine condition}
\label{subsec:DC}

Before stating our main results, we first introduce a Diophantine condition on the parameter \(\boldsymbol{\theta}\), which will control the arithmetic structure of the orbit.

We write $e(x) = e^{2\pi i x}$ for the complex character. 
Define
\[
Z(\mathbf{x}) := e(x)\cos(2\pi y), \quad \mathbf{x} = (x,y) \in \mathbb{T}^2,
\]
and set
\[
h\mathbf{x}=(hx,hy), \quad h\in\mathbb{Z}.
\]
The function $Z$ attains its peak  value $1$ precisely at the two  points
\[
 \mathbf{k}_1=(0,0), \quad \mathbf{k}_2=\left( \frac{1}{2}, \frac{1}{2} \right).
\] 
We call $\mathcal K:=\left\{\mathbf{k}_1,\mathbf{k}_2 \right\}$ the exceptional set. Note that $\mathcal K$ is a subgroup of $\mathbb T^2$ (of order $2$), and consequently,
$\mathcal K+\mathcal K
=
\mathcal K-\mathcal K
=
\mathcal K.
$
This group property will be used later to simplify certain arithmetic identities involving returns to the exceptional set.

Let $\boldsymbol{\theta} = (\sigma, \delta) \in \mathbb{T}^2$. For $h \in \mathbb{Z}$, we define
\[
\rho_h(\boldsymbol{\theta})
= \min_{m,n \in \mathbb{Z}} \left\{ \left| h\sigma - \frac{n}{2} - m \right|+|2h\delta - n|^2 \right\};
\]
this quantity measures the distance from \(h\boldsymbol{\theta}\) to the exceptional set \(\mathcal K\) in the anisotropic metric \(|x| + y^2\). The following elementary comparison shows that \(\rho_h(\boldsymbol{\theta})\asymp|1 - Z(h\boldsymbol{\theta})|\)\footnote{
Throughout, we use Vinogradov's notation: \(A\ll B\) means that \(A\le CB\) for some   constant \(C>0\); if the implied constant \(C\) depends on a parameter, say \(\varepsilon\), we indicate this by writing \(A\ll_{\varepsilon}B\). We write \(A\gg B\) if \(B\ll A\), and \(A\asymp B\) if \(A\ll B\ll A\). We also occasionally use the standard Landau notation \(O\) and \(o\).} whenever the latter is small.

\begin{theorem}
\label{thm:local}
There exist constants $\eta_{\mathrm{loc}}>0,$
such that whenever
\[h\boldsymbol{\theta}  \in B(\mathbf{k}_1, \eta_{\mathrm{loc}})
\quad \text{or}\quad 
h\boldsymbol{\theta}  \in B(\mathbf{k}_2, \eta_{\mathrm{loc}})\]
we have
\begin{equation*}
|1-Z(h\boldsymbol{\theta})|\asymp  \rho_h(\boldsymbol{\theta}).
\end{equation*}

\end{theorem}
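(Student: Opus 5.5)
Write $h\boldsymbol{\theta}=(x,y)$ and reduce to a neighbourhood of $\mathbf{k}_1$. If $h\boldsymbol{\theta}$ is close to $\mathbf{k}_2$, translate by $\mathbf{k}_2$. We have
\[
Z\bigl((x,y)+\mathbf{k}_2\bigr)=e(x+\tfrac12)\cos(2\pi y+\pi)=(-e(x))(-\cos 2\pi y)=Z(x,y),
\]
so $Z$ is $\mathcal K$-invariant. The quantity $\rho_h(\boldsymbol{\theta})$ is also invariant: replacing $n$ by $n\pm1$ absorbs the shift of $(hx,hy)$ by $(\tfrac12,\tfrac12)$. Here we read the $\delta$-term as measuring $\|h\delta-n/2\|$, with the factor $2$ and the exponent understood up to constants. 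Hence it suffices to treat $(x,y)$ in a small ball around $(0,0)$. We choose representatives $x,y\in(-\eta,\eta)$ with $\eta=\eta_{\mathrm{loc}}<1/8$. In that range the minimum defining $\rho_h$ is attained at $m=n=0$: any other choice of $(m,n)$ makes either the first term at least $1/4$ or the second term bounded below by an absolute constant. So $\rho_h\asymp |x|+y^2$.

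Next compute $1-Z$ directly. Write $c=\cos 2\pi y$, so that $1-c=2\sin^2(\pi y)\asymp y^2$ for $|y|\le 1/4$. Then
\[
|1-Z|^2=|1-c\,e(x)|^2=1-2c\cos 2\pi x+c^2=(1-c)^2+2c\,(1-\cos 2\pi x)=(1-c)^2+4c\sin^2(\pi x).
\]
For $|x|,|y|$ small we have $c\in[1/2,1]$ and $\sin^2(\pi x)\asymp x^2$. This gives
\[
|1-Z|^2\asymp y^4+x^2\asymp (|x|+y^2)^2 ,
\]
and taking square roots yields $|1-Z|\asymp |x|+y^2\asymp\rho_h(\boldsymbol\theta)$, with absolute implied constants.

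The only delicate point is bookkeeping, not analysis. We must check that the anisotropic minimum over $(m,n)$ in $\rho_h$ genuinely selects the nearest point of $\mathcal K$, including the coupling that forces the same $n$ in both coordinates. We must also choose the normalisation of the $\delta$-term consistently with $Z$'s second coordinate. I would handle this by choosing $\eta_{\mathrm{loc}}$ small enough that the two balls are disjoint and each lies in a fundamental domain, and then verifying case by case that competing $(m,n)$ give values $\gg 1$. Both $|1-Z|$ and $\rho_h$ are $\le 2$ and $\gg\eta$ outside the balls, so no further uniformity is needed.
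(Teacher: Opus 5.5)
Your proposal is correct and follows essentially the paper's route: a local comparison at $\mathbf{k}_1$, transported to $\mathbf{k}_2$ by translation. The differences are in execution. You use the exact identity $|1-Z|^2=(1-c)^2+4c\sin^2(\pi x)$ where the paper uses Taylor expansions. You use the $\mathcal K$-invariance of $\rho$ where the paper splits $n$ into even and odd cases. Your explicit check that $(m,n)=(0,0)$ attains the minimum makes precise a step the paper leaves implicit.

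One small slip in your closing aside, which plays no role in the statement: outside the balls, $|1-Z|$ and $\rho_h$ are only $\gg\eta^2$, not $\gg\eta$ (take $x=0$, $|y|=\eta$).
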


\begin{remark}\label{rem:local}
We note that the conclusion of Theorem~\ref{thm:local} remains valid if the condition \(h\boldsymbol{\theta}  \in B(\mathbf{k}_1, \eta_{\mathrm{loc}})
\text{~or~}
h\boldsymbol{\theta}  \in B(\mathbf{k}_2, \eta_{\mathrm{loc}})\) is replaced by \(|1 - Z(h\boldsymbol{\theta})| \le \eta_{\mathrm{loc}}\).
\end{remark}

\begin{definition}[$\lambda$-Diophantine type]
Let $\lambda > 0$. We say that $\boldsymbol{\theta} \in \mathbb{T}^2$ is of \emph{$\lambda$-Diophantine type} if for every $\varepsilon > 0$, there exists a constant $c(\boldsymbol{\theta},\varepsilon ) > 0$ such that 
\[
\rho_h(\boldsymbol{\theta}) \ge c(\boldsymbol{\theta},\varepsilon )\cdot |h|^{-(\lambda+\varepsilon)} \quad (|h| \ge 1).
\]
\end{definition}

We now study the above Diophantine condition. The first result shows that it rules out linear dependencies; the second shows that it holds for a full-measure set of parameters.

\begin{theorem}\label{LI}
Suppose  $0 < \lambda < 1$, and   $\boldsymbol{\theta} = (\sigma, \delta)$ is of $\lambda$-Diophantine type. Then
\[
1, ~\sigma,~ \delta
 \text{ are linearly independent over } \mathbb Q.
\]
\end{theorem}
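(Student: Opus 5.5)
The plan is to argue by contradiction. Suppose $1,\sigma,\delta$ are linearly dependent over $\mathbb Q$, i.e.\ there are integers $(p,q,r)\neq(0,0,0)$ with $p+q\sigma+r\delta=0$. I will produce infinitely many $h\ge1$ with $\rho_h(\boldsymbol\theta)\le C/h$, where $C$ depends only on $p,q,r$. Fix $\varepsilon>0$ with $\lambda+\varepsilon<1$. Then the lower bound $\rho_h(\boldsymbol\theta)\ge c(\boldsymbol\theta,\varepsilon)\,h^{-(\lambda+\varepsilon)}$ cannot hold along a sequence $h\to\infty$ on which $\rho_h\le C/h$, which is the desired contradiction. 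The only tool needed is Dirichlet's approximation theorem: for any real $\gamma$ and any $Q\ge1$ there is an integer $1\le k\le Q$ with $k\gamma=N+e$, where $N\in\mathbb Z$ and $|e|\le 1/Q$.

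\emph{Case $q\neq0$.} Apply Dirichlet to $\gamma=2\delta$ and set $h=2qk$ (up to sign, so that $h\ge1$). Then
\[
2h\delta=2q\,k\gamma=2qN+2qe .
\]
Using $q\sigma=-p-r\delta$, we get
\[
h\sigma=2k\,q\sigma=-2kp-r\,k\gamma\equiv -rN-re \pmod 1 .
\]
Take $n=2qN$ in the definition of $\rho_h$. Then $n/2=qN$ is an integer, and suitable choices of $n$ and $m$ give
\[
\rho_h(\boldsymbol\theta)\le |r||e|+4q^2e^2\ll 1/Q .
\]
Since $1\le h\le 2|q|Q$, we have $1/Q\le 2|q|/h$, and hence $\rho_h(\boldsymbol\theta)\ll_{q,r}h^{-1}$. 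It remains to see that infinitely many distinct $h$ arise.
\begin{itemize}
\item If $\delta$ is irrational, then $e\neq0$ always. Letting $Q\to\infty$ forces $\|k\gamma\|\to0$, so $k\to\infty$ and therefore $h\to\infty$.
\item If $\delta$ is rational, then $\sigma$ is rational too, and for every $h$ divisible by a common denominator we have $\rho_h(\boldsymbol\theta)=0$. This contradicts the lower bound immediately.
\end{itemize}

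\emph{Case $q=0$.} Then $r\neq0$ and $\delta=-p/r$ is rational. Take $h=2rk$. This gives $2h\delta=-4kp$, an even integer, so we may take $n=-4kp$, for which $n/2$ is an integer and the $\delta$-term of $\rho_h$ vanishes. Then $\rho_h(\boldsymbol\theta)\le\|2rk\sigma\|$. Applying Dirichlet to $\gamma=2r\sigma$ gives $\rho_h\le 1/Q\le 2|r|/h$. As before, infinitely many $h$ occur: either $\sigma$ is irrational, or $\sigma$ is rational and $\rho_h=0$ for infinitely many $h$.

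The only point needing care is the bookkeeping in the case $q\neq0$. The choice $h=2qk$, with the factor $2$ built in, makes $n=2qN$ even, so $n/2$ is an integer. Then the congruence class of $h\sigma$ is controlled by the single small error $re$, while the quadratic term $(2h\delta-n)^2=4q^2e^2$ is harmless. Everything else is routine. The argument also shows why $\lambda<1$ is exactly the threshold: a rational relation only forces $\rho_h\asymp h^{-1}$ along a subsequence, and that is precisely the Dirichlet exponent.
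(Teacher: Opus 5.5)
Your proof is correct and follows essentially the same route as the paper. In both arguments a rational relation confines $h\boldsymbol{\theta}$ to a one-dimensional subtorus, Dirichlet's theorem then gives $\rho_h(\boldsymbol{\theta})\ll h^{-1}$ for infinitely many $h$ (or $\rho_h(\boldsymbol{\theta})=0$ in the fully rational case), and this contradicts the lower bound since $\lambda<1$. The only difference is bookkeeping: the paper reduces to a primitive pair $(a,b)$ and parametrizes the subtorus by a single $t$, while you apply Dirichlet directly to $2\delta$ (or to $2r\sigma$ when $q=0$) and build the factor $2$ into $h$ so that $n/2$ is an integer.
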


\begin{theorem}\label{full}
For $\lambda \ge \frac{2}{3}$, the set
\[
D_\lambda := \left\{ \boldsymbol{\theta} \in \mathbb{T}^2\colon \boldsymbol{\theta} \text{ is of } \lambda\text{-Diophantine type} \right\}
\]
has full Lebesgue measure in \(\mathbb{T}^2\).
\end{theorem}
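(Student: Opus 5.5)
Since $D_\lambda\supseteq D_{2/3}$ for $\lambda\ge 2/3$ (a larger exponent gives a weaker requirement), it suffices to treat $\lambda=2/3$. Fix $\varepsilon>0$. The plan is a Borel--Cantelli argument. For $|h|\ge1$ let
\[
E_h:=\Bigl\{\boldsymbol{\theta}\in\mathbb{T}^2:\ \rho_h(\boldsymbol{\theta})<|h|^{-(2/3+\varepsilon)}\Bigr\}.
\]
If $\sum_h \mathrm{Leb}(E_h)<\infty$, then almost every $\boldsymbol{\theta}$ lies in only finitely many $E_h$. For such $\boldsymbol{\theta}$ we then have $\rho_h(\boldsymbol{\theta})\ge |h|^{-(2/3+\varepsilon)}$ for all large $|h|$. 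For the finitely many remaining $h$ we only need $\rho_h(\boldsymbol{\theta})>0$, and the set where $\rho_h=0$ for some $h$ is a countable union of null sets (points with $h\sigma\in\frac12\mathbb{Z}$), hence null. Taking $c(\boldsymbol{\theta},\varepsilon)$ to be the minimum of $1$ and the finitely many positive values of $\rho_h$, and then intersecting over $\varepsilon=1/k$, gives full measure for $D_{2/3}$.

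To estimate $\mathrm{Leb}(E_h)$, note that $\rho_h<r$ requires both $|h\sigma-n/2-m|<r$ and $|2h\delta-n|^2<r$ for some integers $m,n$. Work with the map $\boldsymbol{\theta}\mapsto(h\sigma,2h\delta)$ on $\mathbb{T}\times\mathbb{R}/2|h|\mathbb{Z}$, and describe the target set in the coordinates $u=h\sigma \bmod 1$, $v=2h\delta \bmod 2|h|$.

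For each $n$ in a fundamental range, the constraint on $v$ is $|v-n|<r^{1/2}$. The constraint on $u$ is that $u$ lies within $r$ of $n/2 \bmod 1$, which is one of the two points $0$ or $1/2$.

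The preimage of an $r$-neighbourhood of a point under $\sigma\mapsto h\sigma$ has measure $2r$. Similarly, $\delta$ satisfies $\|2h\delta-n\|<r^{1/2}$ for some integer $n$ with measure about $2r^{1/2}$, the measure being independent of which $n$ occurs. There is a coupling between the parity of $n$ and which point $u$ must be near. Even so, by independence of $\sigma$ and $\delta$ under Lebesgue measure,
\[
\mathrm{Leb}(E_h)\le \sum_{\text{parity}}\mathrm{Leb}\{\sigma:\|h\sigma-\tfrac{p}{2}\|<r\}\cdot \mathrm{Leb}\{\delta: 2h\delta\text{ within }r^{1/2}\text{ of an integer of parity }p\}\ll r\cdot r^{1/2}=r^{3/2}.
\]

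With $r=|h|^{-(2/3+\varepsilon)}$ this gives $\mathrm{Leb}(E_h)\ll |h|^{-1-3\varepsilon/2}$, which is summable in $h$. This is exactly why the threshold is $2/3$: the anisotropic neighbourhood $\{|x|<r,\ y^2<r\}$ has area $r^{3/2}$, and $r^{3/2}$ is summable precisely when $r=|h|^{-\lambda-\varepsilon}$ with $\lambda\ge 2/3$.

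The only delicate step is the measure computation. One has to check carefully that the pullback of the neighbourhood of $\mathcal K$ under $\boldsymbol{\theta}\mapsto h\boldsymbol{\theta}$ has the claimed measure uniformly in $h$. The point is that multiplication by $h$ is measure-preserving on $\mathbb{T}$, so the count of branches and the shrinking factor cancel. One also has to handle the min over $m,n$ by a union bound without losing more than a constant factor. The rest is routine.
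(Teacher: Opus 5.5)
Your proposal is correct and is essentially the paper's argument: Borel--Cantelli applied to the sets $\{\rho_h(\boldsymbol{\theta})<|h|^{-(\lambda+\varepsilon)}\}$, each covered (according to the parity of $n$) by two anisotropic neighbourhoods of measure $\ll r\cdot r^{1/2}=|h|^{-3(\lambda+\varepsilon)/2}$, which is summable when $\lambda\ge 2/3$. You differ in two small ways. You read this covering directly off the definition of $\rho_h$ rather than invoking Theorem~\ref{thm:local}. You also correctly absorb the finitely many exceptional $h$ into $c(\boldsymbol{\theta},\varepsilon)$, using that $\rho_h>0$ off a null set. The paper's write-up skips that step: it identifies the complement of $D_\lambda^*(\varepsilon_0)$ with a limsup set, but that set (with constant $1$) is in fact empty, because $\rho_1\le 3/4$ everywhere.
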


This \(\lambda\)-Diophantine condition is the only arithmetic requirement in the variance argument. It rules out non-trivial exact returns to \(\mathcal{K}\), and is sufficient to establish the expectation and variance estimates used in the proof below. 

The proofs of all theorems in this subsection are deferred to Appendix.

\subsection{Main results}
We are now ready to state our main results.

For \(\alpha,\beta\in\mathbb{T}\), we write
\[
\alpha=\sigma-\delta,\quad \beta=\sigma+\delta
\]
for the symmetric parameters \(\sigma,\delta\in\mathbb{T}\), and set \(\boldsymbol{\theta}=(\sigma,\delta)\).

\medskip

Our main result establishes the existence of deterministic parameters $\boldsymbol{\theta}$ for which the Bernoulli $\alpha\beta$-orbits almost surely exhibit PPC.

\begin{theorem}
\label{thm:intro-deterministic-PPC}
If $\boldsymbol{\theta} = (\sigma, \delta)$ is of $2/3$-Diophantine type, then the Bernoulli $\alpha\beta$-orbit $\{t_n(\boldsymbol{\omega})\}$ has PPC with probability one, where the probability is taken over the Bernoulli sequence $(\xi_i)_{i \ge 1}$.
\end{theorem}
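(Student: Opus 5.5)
The plan is to run the standard second-moment argument on the Fourier side. Fix $s>0$ and write
\[
R_N(s)=\frac1N\sum_{m\ne n\le N}\mathbf 1_{[-s/N,s/N]}(t_n-t_m).
\]
Replace the indicator by smooth minorants and majorants $f_N(x)=\sum_h \widehat f_N(h)e(hx)$. Here $|\widehat f_N(h)|\ll s/N$, and $\widehat f_N$ is essentially supported on $|h|\le N^{1+\varepsilon}$ with rapidly decaying tails. The constant Fourier mode gives $\frac{N(N-1)}{N}\widehat f_N(0)\to 2s$, so it suffices to show that
\[
S_N:=\frac1N\sum_{h\ne0}\widehat f_N(h)\sum_{m\ne n\le N}e\bigl(h(t_n-t_m)\bigr)
\]
tends to $0$ almost surely.

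The key observation is that for $n>m$ the increment $t_n-t_m$ is a sum of $n-m$ i.i.d.\ steps, and
\[
\E\, e(h\xi)=\tfrac12\bigl(e(h\alpha)+e(h\beta)\bigr)=e(h\sigma)\cos(2\pi h\delta)=Z(h\boldsymbol\theta).
\]
Hence $\E e(h(t_n-t_m))=Z(h\boldsymbol\theta)^{n-m}$. When $h\boldsymbol\theta$ is near $\mathcal K$, Theorem~\ref{thm:local} and Remark~\ref{rem:local} give $|1-Z(h\boldsymbol\theta)|\asymp\rho_h$, and in all other cases $|1-Z|\gg1$.

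\textbf{Expectation.} The step is to bound
\[
\E S_N\ll \frac1N\sum_{0<|h|\le N^{1+\varepsilon}}\frac sN\,\Bigl|\sum_{1\le d<N}(N-d)Z(h\boldsymbol\theta)^d\Bigr|.
\]
The geometric sum is at most $\ll N\min\bigl(N,|1-Z|^{-1}\bigr)$. The parabolic shape here matters: $Z^d$ has modulus $\le 1-c\,\|2h\delta\|^2$ and phase rotating by $h\sigma$, so the anisotropic $\rho_h$ is exactly the right denominator. The target is then $\sum_{h\le H}\min(N,\rho_h^{-1})=o(N)$ for $H=N^{1+\varepsilon}$. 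I would prove this by a dyadic counting argument. Decompose according to $\rho_h\in[2^{-j-1},2^{-j}]$ and count such $h\le H$ by a pigeonhole/spacing argument: differences $h-h'$ of two such $h$ have $\rho_{h-h'}\ll 2^{-j}$, because $\mathcal K$ is a group and $|x|+y^2$ is quasi-subadditive. The Diophantine condition then forces such $h$ to be $\gg 2^{j/(\lambda+\varepsilon)}$-separated, giving the count $\ll 1+H2^{-j/(\lambda+\varepsilon)}$. Summing over $j$ with $\lambda=2/3$ gives roughly $H\cdot H^{-1/\dots}$-type savings, and I expect the exponent $2/3$ to be exactly what is needed here, together with the variance bound below.

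\textbf{Variance.} Next I would compute $\E|S_N-\E S_N|^2$ via the exact covariance of $e(h(t_n-t_m))$ and $e(h'(t_{n'}-t_{m'}))$. This is where the paper's "overlapping blocks" covariance formula enters. Split $[1,N]$ into the intervals cut by $m,n,m',n'$. On disjoint pieces the factors are independent and contribute powers of $Z(h\boldsymbol\theta)$ or $Z(h'\boldsymbol\theta)$. On the overlap they contribute $Z((h-h')\boldsymbol\theta)^{\ell}$ or $Z((h+h')\boldsymbol\theta)^{\ell}$, according to orientation. The covariance is nonzero only through the overlap term minus the product of expectations. Summing the resulting geometric series gives a bound of the form
\[
\frac{s^2}{N^4}\sum_{h,h'}N\min(N,\rho_h^{-1})\min(N,\rho_{h'}^{-1})\min(N,\rho_{h\pm h'}^{-1}).
\]
The goal is to show this is $\ll N^{-\gamma}$ for some $\gamma>0$. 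Once that holds, Chebyshev along a sparse subsequence $N_k=\lfloor k^{A}\rfloor$ plus Borel--Cantelli gives almost sure convergence along $N_k$. Monotonicity of the pair count in $N$ and $s$, together with $N_{k+1}/N_k\to1$, then interpolates to all $N$.

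\textbf{Main obstacle.} I expect the hardest part to be the triple-sum variance estimate. Resonances where $h$, $h'$ and $h\pm h'$ are simultaneously close to $\mathcal K$ must be controlled using only the lower bound $\rho_h\gg|h|^{-2/3-\varepsilon}$. I would first apply Cauchy--Schwarz to separate the factor $\min(N,\rho_{h\pm h'}^{-1})$, and then reuse the dyadic counting of $\{h\le H:\rho_h\le 2^{-j}\}$ from the expectation step for each factor. I would check that the exponent balance closes precisely at $\lambda=2/3$. If Cauchy--Schwarz loses too much, I would instead fix $g=h\pm h'$ and count the pairs $(h,g)$ directly.
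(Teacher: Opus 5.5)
\textbf{The expectation step fails as proposed.} You bound the nonzero frequencies in absolute value and reduce to $\sum_{0<|h|\le H}\min(N,\rho_h^{-1})=o(N)$. That target is false. Taking $n$ nearest to $2h\delta$ and then $m$ nearest to $h\sigma-n/2$ gives $\rho_h(\boldsymbol\theta)\le 3/4$ for every $h$, so the sum is at least $\tfrac{8}{3}H\gg N$.

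The problem is the mechanism, not a lossy count. For a positive proportion of $|h|\le cN/s$ one has $|1-Z(h\boldsymbol\theta)|\asymp 1\asymp |Z(h\boldsymbol\theta)|$. For these $h$ the Fej\'er-type sum is $\approx N\,Z/(1-Z)$, so each contributes $\asymp s/N$ in modulus, and these moduli add up to $\asymp s$. Any bound that takes absolute values frequency by frequency is therefore stuck at $O(s)$, whatever Diophantine input is used. Heuristically the regular part tends to a constant times $\int_{\mathbb T^2}Z/(1-Z)\,\dd\mathbf x$, so its vanishing is an identity, not an estimate.

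The paper writes $\mathcal F_N(z)=\frac{z}{1-z}-\frac1N\sum_{m=1}^{N}\frac{z^m}{1-z}$ (Lemma~\ref{FTK}). It then shows $\frac1N\sum_h\widehat f(h/N)\,g_N(h\boldsymbol\theta)\,\frac{Z(h\boldsymbol\theta)^m}{1-Z(h\boldsymbol\theta)}\to0$ from two facts:
\begin{enumerate}
\item $\int_{\mathbb T^2}Z^m/(1-Z)\,\dd\mathbf x=0$, obtained by expanding $\sum_{r\ge m}e(rx)\cos(2\pi y)^r$ and integrating in $x$ (Lemma~\ref{Ab});
\item the points $(h\boldsymbol\theta)$ equidistribute against the weights $\widehat f(h/N)/N$, which comes from the $\mathbb Q$-linear independence of $1,\sigma,\delta$ (Theorem~\ref{LI}) via Poisson summation (Lemma~\ref{0}).
\end{enumerate}
In the expectation, the Diophantine condition only pushes the frequencies with $|1-Z(h\boldsymbol\theta)|<1/N$ out to $|h|\gg N^{1/\lambda}$, where $\widehat f(h/N)$ is negligible. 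Any $\lambda<1$ works there, so $2/3$ is not what the expectation needs.

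\textbf{The variance step follows the paper's framework, but your key bound is wrong and the hard case is left open.} Your route matches the paper's: exact covariances for overlapping blocks, geometric sums, dyadic counting of resonant $h$ through Diophantine separation, then Chebyshev and Borel--Cantelli along $N_k$ followed by interpolation.

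However, the exact formula does not give $|C_N(h,h')|\ll N\min(N,\rho_h^{-1})\min(N,\rho_{h'}^{-1})\min(N,\rho_{h-h'}^{-1})$. The nested-interval configurations produce the squared factor $\bigl(\frac{1}{1-z_{h_1}t}+\frac{1}{1-z_{-h_2}t}-1\bigr)^2$ of Lemma~\ref{c2}. So if only $h_1$ is resonant, with $N|1-z_{h_1}|\gg1$ and $h_2,h_1-h_2$ regular, then $C_N\approx N\,D\,(1-z_{h_1})^{-2}$ with $|D|\asymp|\sin(2\pi h_1\delta)\sin(2\pi h_2\delta)|$. This can be of size $N|1-z_{h_1}|^{-3/2}$, exceeding your bound. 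These terms are still summable (Lemma~\ref{diag}).

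The real difficulty is the region where $h_1$ and $h_2$ are both resonant, which you defer to an unspecified Cauchy--Schwarz step. There, bounding the overlap factor trivially is too weak for the exponents to close. The paper's saving is the identity $z_{h_1-h_2}-z_{h_1}z_{-h_2}=e((h_1-h_2)\sigma)\sin(2\pi h_1\delta)\sin(2\pi h_2\delta)$. This yields $|z_{h_1-h_2}^n-(z_{h_1}z_{-h_2})^n|\ll|v_1v_2|/(v_1^2+v_2^2+N^{-1})$ (Lemma~\ref{lco-1}). Combined with Lemma~\ref{4b} and the anisotropic cell count of Lemma~\ref{Omega-b}, it gives Lemma~\ref{var:2-2}, whose terms require $2+r>3\lambda$. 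That is where $\lambda=2/3$ is genuinely used.
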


We make the following remarks regarding Theorem \ref{thm:intro-deterministic-PPC}$\colon$
\begin{itemize}

\item
Fiedler, Gnewuch and Wei\ss\ \cite{FiedlerGnewuchWeiss2026} proved almost sure PPC for torus random walks with absolutely continuous increment laws. Their argument uses the Fourier contraction method from \cite[Theorem 3]{S84}. Let \(Y\) be a random variable on \(\mathbb{T}\) with cumulative distribution function \(F(\cdot)\), and let \((c_r)_{r\in\mathbb{Z}}\) be its Fourier coefficients, defined by \(c_r=\int_{\mathbb{T}}e^{2\pi i r x}\dd F(x)\). If \(Y\) has an integrable Lebesgue density, then \(\sup_{r\ne0}|c_r|<1\), which implies exponential mixing of the convolution powers and, in turn, controls the pair correlation statistic via a second-moment argument (see \cite[Theorems 1.1 and 2.1]{FiedlerGnewuchWeiss2026} for more details).

In our setting, however, the increment distribution is purely discrete:
\[
\mu=\frac12\delta_\alpha+\frac12\delta_\beta,
\]
where \(\delta_x\) denotes the Dirac measure at \(x\). For such measures, the Fourier coefficients are
\[
c_r=\frac{e^{2\pi i r\alpha}+e^{2\pi i r\beta}}{2}
     =e^{2\pi i r\sigma}\cos(2\pi r\delta).
\]
Hence \(|c_r|=|\cos(2\pi r\delta)|\). When \(\delta\) is irrational, this quantity can return arbitrarily close to \(1\); when \(\delta\) is rational, it equals \(1\) for some \(r\ne0\). In either case, \(\sup_{r\ne0}|c_r|=1\), so the uniform contraction mechanism is unavailable. The PPC problem in this setting therefore requires a different approach, one in which the arithmetic nature of \(\sigma\) and \(\delta\) becomes decisive.

\item We further note that PPC is sensitive to the probability weights of the Bernoulli mixture. In the degenerate case where one step is chosen with probability \(1\)---for instance, \(\mathbb{P}(\xi_i=\alpha)=1\)---the Bernoulli \(\alpha\beta\)-orbit reduces to the Kronecker sequence \((n\alpha)_{n\ge1}\), which is known to fail PPC. It is therefore rather surprising that in the non-degenerate setting, where the weights are equal, the orbit does exhibit PPC.

\item For simplicity, we restrict our attention to the symmetric case \(\mathbb{P}(\xi_i=\alpha)=\mathbb{P}(\xi_i=\beta)=1/2\). This is the standard setting for Bernoulli \(\alpha\beta\)-orbits and already captures the main arithmetic difficulties. The asymmetric case, while of interest, introduces additional technicalities and is left for future work.


   \end{itemize}

 We  recall some preliminaries needed for the   proof.
Let \(f\in C_c^\infty(\mathbb{R})\) be real-valued, and denote its Fourier transform by
\[
\widehat f(\xi)=\int_{\mathbb{R}} f(x)e(-\xi x)\dd x.
\]
The Poisson summation formula yields the periodised function
\[
F_N(t):=\sum_{k\in\mathbb{Z}}f\bigl(N(t+k)\bigr)
=\frac1N\sum_{h\in\mathbb{Z}}\widehat f\!\left(\frac{h}{N}\right)e(ht).
\]
For a sequence \((x_n)_{n\ge1}\subset\mathbb{T}\), define the smooth pair-correlation statistic
\begin{equation}\label{pc}
R_N(f,\{x_n\}):=\frac1N\sum_{\substack{1\le m,n\le N\\ m\ne n}}F_N(x_n-x_m)
=\frac{1}{N^2}\sum_{h\in\mathbb{Z}}\widehat f\!\left(\frac{h}{N}\right)
\sum_{\substack{1\le m,n\le N\\ m\ne n}}e\bigl(h(x_n-x_m)\bigr).
\end{equation}

For an integrable function \(F\) on a probability space, we denote its expectation by \(\mathbb{E}F\) and its variance by \(\operatorname{Var}(F)\). The following theorem provides the key moment estimates needed in the proof of Theorem~\ref{thm:intro-deterministic-PPC}.

\begin{theorem}
\label{thm:intro-smooth-moments}
Let \(\boldsymbol{\theta}\) satisfy the \(2/3\)-Diophantine condition. Then for  \(0 < \varepsilon < 1/18\) and a real-valued even function \(f \in C_c^\infty(\mathbb R)\), we have that
\[
\mathbb{E} R_N(f, \{t_n(\boldsymbol{\omega})\}) \to \int_{\mathbb R} f(x) \dd x \text{ as }N\to\infty,\]
and 
\[\operatorname{Var}\bigl(R_N(f, \{t_n(\boldsymbol{\omega})\})\bigr) \ll_{f,\boldsymbol{\theta}} N^{-\varepsilon}.
\]
\end{theorem}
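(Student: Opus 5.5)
We expand $R_N(f,\{t_n\})$ via \eqref{pc} and compute moments of the exponential sums. For $n>m$ we have $t_n-t_m=\xi_{m+1}+\dots+\xi_n$, so by independence
\[
\mathbb E\, e\bigl(h(t_n-t_m)\bigr)=c_h^{\,n-m}=Z(h\boldsymbol\theta)^{n-m},
\]
and for $n<m$ we get the conjugate. Hence
\[
\mathbb E R_N=\frac1{N^2}\sum_h \widehat f(h/N)\cdot 2\,\mathrm{Re}\sum_{d=1}^{N-1}(N-d)Z(h\boldsymbol\theta)^d .
\]

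\textbf{Expectation.} The term $h=0$ gives $\widehat f(0)\,N(N-1)/N^2\to\int f$. For $h\neq0$ with $|h|\ll N^{1+\varepsilon'}$ (the rest is negligible by rapid decay of $\widehat f$), we use
\[
\Bigl|\sum_{d\le N}(N-d)Z^d\Bigr|\ll \min\bigl(N^2,\ N/|1-Z|\bigr).
\]
This follows from summing the geometric series when $|Z|\le1$; note $|1-Z|\ge 1-|Z|$ controls the decay. By Theorem~\ref{thm:local} and Remark~\ref{rem:local}, either $|1-Z(h\boldsymbol\theta)|\ge\eta_{\mathrm{loc}}$, or $|1-Z(h\boldsymbol\theta)|\asymp\rho_h(\boldsymbol\theta)\gg |h|^{-2/3-\varepsilon}$ by the $2/3$-Diophantine type. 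Thus the $h\ne0$ contribution is
\[
\ll \frac1{N^2}\sum_{0<|h|\le N^{1+\varepsilon'}} N|h|^{2/3+\varepsilon}\ll N^{-1}\cdot N^{(5/3+\varepsilon)(1+\varepsilon')},
\]
which is too large if bounded crudely. We therefore refine the estimate: $1-|Z|\asymp \|2h\delta\|^2$ near $\mathcal K$ (the parabolic geometry), and we exploit that few $h$ have $\rho_h$ small. Concretely, we dyadically decompose by the size $\rho_h\sim 2^{-j}$ and count $h\le H$ with $\rho_h\le\rho$. By the Diophantine spacing (differences $h-h'$ also satisfy the lower bound, via the group property $\mathcal K-\mathcal K=\mathcal K$), such $h$ are separated by $\gg \rho^{-1/(2/3+\varepsilon)}$. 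This yields a count $\ll 1+H\rho^{3/2-\varepsilon}$, and summing gives $o(1)$ using the true bound $\min(N^2,N/\rho)$.

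\textbf{Variance.} We write $R_N-\mathbb ER_N=N^{-2}\sum_{h\ne0}\widehat f(h/N)(S_h-\mathbb ES_h)$ with $S_h=\sum_{m\ne n}e(h(t_n-t_m))$, so
\[
\operatorname{Var}\ll N^{-4}\sum_{h,h'}\bigl|\widehat f(h/N)\widehat f(h'/N)\bigr|\,\bigl|\mathrm{Cov}(S_h,S_{h'})\bigr|.
\]
The covariance of $e(h(t_n-t_m))$ and $e(-h'(t_{n'}-t_{m'}))$ vanishes unless the intervals $(m,n]$ and $(m',n']$ overlap. On overlapping blocks, independence gives an exact product formula: the expectation is a product of powers of $Z(h\boldsymbol\theta)$, $Z(-h'\boldsymbol\theta)$ and $Z((h-h')\boldsymbol\theta)$ over the disjoint pieces, minus the product of the means. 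Summing the four lengths as geometric series gives bounds of the form
\[
N^2\min\bigl(N,|1-Z_{h-h'}|^{-1}\bigr)\cdot\min\bigl(N,|1-Z_h|^{-1}\bigr)\cdots,
\]
and the diagonal $h=h'$ (where $Z_0=1$) is the dominant piece, contributing roughly $N^{-4}\sum_h N^3\min(N,\rho_h^{-1})$. We then apply the same dyadic counting of near-returns to $\mathcal K$, for $h$, $h'$ and $h-h'$, together with $\rho\gg|h|^{-2/3-\varepsilon}$. This should give the saving $N^{-\varepsilon}$ for $\varepsilon<1/18$; the exponent $1/18$ presumably arises from balancing $H=N^{1+\varepsilon'}$ against the $2/3$ exponent.

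\textbf{Main obstacle.} I expect the main obstacle to be the variance near-diagonal terms where both $h\boldsymbol\theta$ and $(h-h')\boldsymbol\theta$ lie close to $\mathcal K$. Here the mismatch between $|1-Z|$ (which drives cancellation in the oscillatory sums) and $1-|Z|$ (which drives decay, and is only quadratic in $\|2h\delta\|$) must be handled carefully via Theorem~\ref{thm:local}. The counting of simultaneous near-returns, via the subgroup structure of $\mathcal K$, is where the precise threshold $2/3$ will be needed.
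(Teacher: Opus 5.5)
\textbf{Expectation.} This part has a genuine gap: you bound everything in absolute value, and that cannot give $o(1)$. Consider the bulk of frequencies, where $|1-Z(h\boldsymbol\theta)|\asymp 1$; there are $\asymp N$ of them in the effective range $|h|\lesssim N$. For these, $\sum_{d=1}^{N-1}(N-d)Z^d=N\frac{Z}{1-Z}+O(1)$, so they contribute
\[
\frac{2}{N}\sum_{h}\widehat f\!\left(\frac hN\right)\mathrm{Re}\,\frac{Z(h\boldsymbol\theta)}{1-Z(h\boldsymbol\theta)}+O(N^{-1}).
\]
Bounded in absolute value, this is of order $1$, not $o(1)$. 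Your dyadic counting of small $\rho_h$ is fine for the near-exceptional $h$; the paper instead notes that $|1-Z_h|<1/N$ forces $|h|\gg N^{1/\lambda}$. But the counting does nothing for this bulk term. The term tends to $0$ only through cancellation. It is a weighted Weyl average of $Z/(1-Z)$ along the orbit $h\boldsymbol\theta$, so it tends to $\int\widehat f$ times $\int_{\mathbb T^2}Z/(1-Z)$. That integral vanishes because $Z^m/(1-Z)=\sum_{r\ge m}e(rx)\cos^r(2\pi y)$ has zero mean in $x$. This is the mechanism of the paper's regular-part argument: the exact Fej\'er-kernel identity, the smooth cutoff $g_N$, and Lemmas~\ref{Ab} and~\ref{0}. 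It requires equidistribution of $h\boldsymbol\theta$ in $\mathbb T^2$, i.e.\ the linear independence of $1,\sigma,\delta$ from Theorem~\ref{LI}, which your proposal never uses.

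\textbf{Variance.} Your sketch has the same defect: the bounds you write down give no power saving.
\begin{itemize}
\item Your diagonal estimate $N^{-4}\sum_h N^3\min(N,\rho_h^{-1})$ is already of order $1$ from the generic $h$ with $\rho_h\asymp 1$.
\item An off-diagonal bound of the form $N^2\min(N,|1-Z_{h-h'}|^{-1})\cdots$ is about $N^2$ for generic pairs, giving $N^{-4}\cdot N^2\cdot N^2\asymp 1$.
\end{itemize}
In both cases you need essentially one full power of $N$ less. For the diagonal this is $C_N(h,h)\ll N^2/|1-z_h|^2$ (Lemma~\ref{c-d}); the counting of Lemma~\ref{diag} then gives $N^{-1/3}$. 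For the off-diagonal it is $C_N(h_1,h_2)\ll N^{1+12r}$ whenever $z_{h_1},z_{-h_2},z_{h_1-h_2},z_{h_1}z_{-h_2}$ are all at distance $>N^{-r}$ from $1$. That bound rests on cancellation in the overlap factor $\sum_R\bigl(z_{h_1-h_2}^R-(z_{h_1}z_{-h_2})^R\bigr)$. The paper extracts it from the exact generating function of Lemma~\ref{c2} and the coefficient bounds of Lemmas~\ref{XY} and~\ref{XYY}, with separate treatment when $\varpi$ is within $N^{-4r}$ of $z_{h_1}$ or $z_{-h_2}$.

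For the remaining bad pairs, the decisive inputs are:
\begin{itemize}
\item the relation $|z_{h_1-h_2}-z_{h_1}z_{-h_2}|=|\sin(\pi v_1)\sin(\pi v_2)|$ in the local parabolic coordinates, which yields the factor $|v_1v_2|/(v_1^2+v_2^2+N^{-1})$ of Lemma~\ref{lco-1};
\item the separation of the $h_2$ with $|1-z_{h_1}z_{-h_2}|$ small (Lemma~\ref{df});
\item the two-scale count of Lemma~\ref{Omega-b}.
\end{itemize}
You locate the difficulty in the right place, but none of these estimates is supplied. Saying ``this should give the saving $N^{-\varepsilon}$'' does not replace them.
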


\begin{remark}
Put \(N_j=\lfloor j^K\rfloor\) with \(K\varepsilon>1\). Theorem~\ref{thm:intro-smooth-moments} implies
\[
\sum_{j=1}^{\infty}\operatorname{Var}\bigl(R_{N_j}(f,\{t_n(\boldsymbol{\omega})\})\bigr)<\infty,
\]
and Chebyshev's inequality together with the first Borel--Cantelli lemma yields
\[
R_{N_j}(f,\{t_n(\boldsymbol{\omega})\})\longrightarrow \int_{\mathbb{R}}f(x)\dd x
\]
with probability one. 

Since \(N_{j+1}/N_j\to1\), we may pass from this subsequence to all \(N\). To this end, we approximate the indicator function \(\mathbf{1}_{[-s,s]}\) by smooth compactly supported functions, verify the interpolation step for such smooth functions, and then pass to the limit (see \cite[Section 7]{TY23} for the standard interpolation argument). This proves Theorem~\ref{thm:intro-deterministic-PPC}.
\end{remark}

We  provide a concrete family of parameters satisfying the assumptions of our main results.

\begin{theorem}[Algebraic parameters]
\label{thm:ap}
Let $\sigma, \delta$ be real algebraic numbers such that $1, \sigma, \delta$ are linearly independent over $\mathbb Q$. Then $\boldsymbol{\theta} = (\sigma, \delta) \in \mathbb{T}^2$ is of ${2}/{3}$-Diophantine type.
\end{theorem}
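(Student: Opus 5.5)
The plan is to reduce the claim to W.~M.~Schmidt's subspace theorem applied to three linear forms in the integer variables $(h,n,m)$. Fix $h$ with $|h|\ge1$ and a minimising pair $(m,n)$ in the definition of $\rho_h(\boldsymbol{\theta})$. Put
\[
u:=2h\delta-n,\qquad v:=h\sigma-\tfrac n2-m ,
\]
so that $\rho_h(\boldsymbol{\theta})=|v|+u^2$. If $|u|\ge 1/2$, then $\rho_h\ge 1/4$ and there is nothing to prove. We may therefore assume $|u|<1/2$. This forces $|n|\ll|h|$, and, using $|v|\le\rho_h\le 1/4$, also $|m|\ll|h|$. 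Hence $H:=\max(|h|,|n|,|m|)\asymp|h|$.

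The key intermediate bound I aim for is
\begin{equation*}
|u|\,|v|\gg_{\boldsymbol{\theta},\varepsilon}|h|^{-1-\varepsilon}.
\end{equation*}
Granting it, the conclusion follows by a dichotomy. If $|u|\ge |h|^{-1/3-\varepsilon/2}$, then $u^2\ge|h|^{-2/3-\varepsilon}$. Otherwise $|v|\gg |h|^{-1-\varepsilon}/|u|\ge|h|^{-2/3-\varepsilon/2}$. In both cases $\rho_h(\boldsymbol{\theta})\gg_{\varepsilon}|h|^{-(2/3+\varepsilon)}$, which is $2/3$-Diophantine type (after renaming $\varepsilon$).

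To get the product bound, consider the three linear forms in $\mathbf{x}=(h,n,m)\in\mathbb{Z}^3$:
\[
L_1=2\delta h-n,\qquad L_2=\sigma h-\tfrac12 n-m,\qquad L_3=h .
\]
They have real algebraic coefficients. They are linearly independent, since the coefficient matrix has determinant $\pm1\ne0$. The subspace theorem says that, apart from the integer points lying in finitely many proper rational subspaces $T_1,\dots,T_r\subset\mathbb{Q}^3$,
\[
|L_1L_2L_3|\ge H^{-\varepsilon}.
\]
Since $L_3=h\asymp H$, this gives $|u||v|\ge|h|^{-1-2\varepsilon}$ off the exceptional subspaces.

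The remaining work, which I expect to be the main obstacle, is to treat points in a fixed exceptional plane $ah+bn+cm=0$, with $(a,b,c)\in\mathbb{Z}^3\setminus\{0\}$. Here the hypothesis that $1,\sigma,\delta$ are linearly independent over $\mathbb{Q}$ is used.
\begin{itemize}
\item If $c=0$ and $b\neq0$, then $n=-ah/b$ and $u=h(2\delta+a/b)$. Since $\delta\notin\mathbb{Q}$, this gives $|u|\gg_{a,b}|h|$, contradicting $|u|<1/2$ for large $|h|$.
\item If $c=0=b$, then $h=0$, which is excluded.
\item If $c\ne0$, eliminate $m=-(ah+bn)/c$. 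Then $L_1$ and $L_2$ become two linear forms in $(h,n)$ with algebraic coefficients. These forms are linearly independent: otherwise some relation would force a rational dependence among $1,\sigma,\delta$. They also do not vanish at nonzero integer points, again by linear independence.
\end{itemize}
In the case $c\ne0$ I would apply the subspace theorem (Roth's theorem) in dimension two. This gives $|L_1L_2|\ge H^{-\varepsilon}$ outside finitely many lines. On each such line, say $(h,n)=k(h_0,n_0)$, the form $L_1$ is a fixed nonzero multiple of $k$, so $|u|\gg|h|$ and the case is excluded as before. This yields an even stronger bound on each exceptional plane.

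Finally, the finitely many excluded $h$ are absorbed into the constant $c(\boldsymbol{\theta},\varepsilon)$. For each of them $\rho_h(\boldsymbol{\theta})>0$, since $h\boldsymbol{\theta}\notin\mathcal K$ by the linear independence of $1,\sigma,\delta$. This gives the result.
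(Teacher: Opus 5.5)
Your argument is correct and is essentially the paper's proof. Your three forms are the paper's forms $x_0,\ \delta x_0-x_1,\ \sigma x_0-x_2$ evaluated at $(2h,n,n+2m)$, giving $2h$, $u$, $2v$, and your dichotomy is the paper's minimisation of $x^2+L/(2x)$. The paper is somewhat leaner: it first uses $|u|\ge\|2h\delta\|$ and $|v|\ge\tfrac12\|2h\sigma\|$, and then disposes of every exceptional plane at once via $\mathbf{x}_q/q\to(1,\delta,\sigma)$. The same limit argument applied to your points, $(h,n,m)/h\to(1,2\delta,\sigma-\delta)$, would replace your case analysis and the two-dimensional Roth step. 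Two harmless slips: a priori one only has $\rho_h\le 3/4$, so you should assume $\rho_h<1/4$ rather than assert $\rho_h\le1/4$; and the restricted $L_2$ can vanish at $(0,n)$ when $c=2b$, which is irrelevant because $h\neq0$.
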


\paragraph{Example.}
In particular, take
\[
\boldsymbol{\theta}=(\sigma,\delta)=(\sqrt{3},\sqrt{2}),\quad 
\alpha=\sqrt{3}-\sqrt{2},\quad 
\beta=\sqrt{3}+\sqrt{2},
\]
where all quantities are understood modulo \(1\). The conditions of Theorem~\ref{thm:intro-deterministic-PPC} are then satisfied.

\medskip

The remainder of the paper is organized as follows. The next two sections are devoted to proving the expectation and variance estimates in Theorem~\ref{thm:intro-smooth-moments}, respectively. In the final section, we give the proof of Theorem~\ref{thm:ap}.  The proofs of the theorems in Subsection ~\ref{subsec:DC}   are deferred to the appendix, as mentioned above.

\section{Expectation estimate}
\label{FM}
Let \(\boldsymbol{\theta}=(\sigma,\delta)\in\mathbb{T}^2\) be of \(2/3\)-Diophantine type. Then, for  $0<\varepsilon<\frac{1}{3}$,
\[
\rho_h(\boldsymbol{\theta}) \gg_{\boldsymbol{\theta},\varepsilon} |h|^{-(2/3+\varepsilon)} \quad (|h| \ge 1).
\]
To simplify the notation, we write \(\lambda = {2}/{3} + \varepsilon\). %
Recall that 
\begin{equation*}
\alpha=\sigma-\delta,
\quad
\beta=\sigma+\delta,
\end{equation*}
and that $\{\xi_n(\boldsymbol{\omega})\}_{n\ge1}$  is independent identically distributed random
variables satisfying
\begin{equation*}
\Pp(\xi_n(\boldsymbol{\omega})=\alpha)
=
\Pp(\xi_n(\boldsymbol{\omega})=\beta)
=
\frac12,
\end{equation*}
and 
\begin{equation*}
t_n(\boldsymbol{\omega})
=
\xi_1(\boldsymbol{\omega})+\cdots+\xi_n(\boldsymbol{\omega})
\pmod1.
\end{equation*}
For $m < n$, the independence of the steps yields
\[
\mathbb{E}\, e\bigl(h(t_n(\boldsymbol{\omega}) - t_m(\boldsymbol{\omega}))\bigr)
=
\left( \frac{e(h\alpha) + e(h\beta)}{2} \right)^{n-m}
=
\big(Z(h\boldsymbol{\theta})\big)^{n-m}.
\]
Combining this estimate with (\ref{pc}) gives 
\begin{align*}
   \E R_N(f,\{t_n(\boldsymbol{\omega})\})
   &=\frac2{N^2}\sum_{h\in\mathbb Z}\widehat f\left(\frac hN\right)\sum_{1\le m<n\le N}\big(Z(h\boldsymbol{\theta})\big)^{n-m}\\
   &=\frac2{N^2}\sum_{h\in\mathbb Z}
\widehat f\left(\frac hN\right)
\sum_{r=1}^{N-1}
(N-r)
\big(Z(h\boldsymbol{\theta})\big)^{r}\\
&=\frac{N-1}{N}\widehat f(0)+\frac2{N^2}
\sum_{h\in\mathbb Z\setminus\{0\}}
\widehat f\left(\frac hN\right)\sum_{r=1}^{N-1}
(N-r)
\big(Z(h\boldsymbol{\theta})\big)^{r}. 
\end{align*}
Thus, to obtain the expectation estimate, it suffices to prove that the error term
\begin{equation*}
\mathcal{E}_N(f)
:= \frac{2}{N^2}\sum_{h\in\mathbb{Z}\setminus\{0\}}\widehat{f}\left(\frac{h}{N}\right)
\sum_{r=1}^{N-1}
(N-r)
\bigl(Z(h\boldsymbol{\theta})\bigr)^{r}
\longrightarrow 0
\quad\text{as }N\to\infty.
\end{equation*}

Let \(N\) be sufficiently large that \(1/N<\eta_{\mathrm{loc}}\), where \(\eta_{\mathrm{loc}}>0\) is the constant from Theorem~\ref{thm:local}. We define the near-exceptional set
\[
\Omega_{N}^{(1)}
:= \left\{ h\in\mathbb{Z}\setminus\{0\} : |1-Z(h\boldsymbol{\theta})|<\frac{1}{N} \right\},
\]
and the regular set
\[
\Omega_{N}^{(2)}
:= \left\{ h\in\mathbb{Z}\setminus\{0\} : |1-Z(h\boldsymbol{\theta})|\ge\frac{1}{N} \right\}.
\]
We now split  the summation in $\mathcal E_N(f)$ into two parts, according as $h\in \Omega_{N}^{(1)}$ or $\Omega_{N}^{(2)},$ i.e.,
\begin{align*}
   \mathcal E_N(f)&=\frac2{N^2}\sum_{h\in \Omega_{N}^{(1)}}\widehat f\left(\frac hN\right)
\sum_{r=1}^{N-1}
(N-r)
\big(Z(h\boldsymbol{\theta})\big)^{r} + \frac2{N^2}\sum_{h\in \Omega_{N}^{(2)}}\widehat f\left(\frac hN\right)
\sum_{r=1}^{N-1}
(N-r)
\big(Z(h\boldsymbol{\theta})\big)^{r}\\
&=:\mathcal{E}_N^{(1)}(f) + \mathcal{E}_N^{(2)}(f).
\end{align*}

\subsection{Bounding the near-exceptional part}

For $z \in \mathbb C$ with $|z| \le 1$, define the Fejér-type kernel
\[
\mathcal{F}_N(z):= \frac{1}{N} \sum_{r=1}^{N-1} (N-r) z^r.
\]
Then 
\[\mathcal{E}_N^{(1)}(f)=\frac{1}{N}\sum_{h\in \Omega_{N}^{(1)}}\widehat f\left(\frac hN\right)\mathcal{F}_N\big(Z(h\boldsymbol{\theta})\big).\]

We begin with an auxiliary lemma that provides both an exact identity and a uniform bound for the Fejér-type kernel.

\begin{lemma}\label{FTK}
For \(z\ne1\),
\begin{equation*}
\mathcal{F}_N(z)
=\frac z{1-z}-\frac{z(1-z^N)}{N(1-z)^2}.
\end{equation*}
Moreover, uniformly for \(|z|\le1\),
\begin{equation*}
\boxed{
|\mathcal{F}_N(z)|
\ll
\min\left\{
N,\frac1{|1-z|}
\right\},
}
\end{equation*}
 with the convention that $\frac{1}{|1-z|} = +\infty$ when $z = 1$.

\end{lemma}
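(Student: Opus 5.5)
We first prove the exact identity by a direct generating-function computation, and then derive the uniform bound from the identity in two regimes.

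For the identity, write \(N\mathcal F_N(z)=\sum_{r=1}^{N-1}(N-r)z^r=\sum_{k=1}^{N-1}\sum_{r=1}^{k}z^r\). The inner sum is \(\frac{z(1-z^k)}{1-z}\). Summing over \(k\) gives
\[
N\mathcal F_N(z)=\frac{z}{1-z}\Bigl((N-1)-\sum_{k=1}^{N-1}z^k\Bigr)
=\frac{z}{1-z}\Bigl(N-\frac{1-z^N}{1-z}\Bigr),
\]
since \(\sum_{k=0}^{N-1}z^k=\frac{1-z^N}{1-z}\). Dividing by \(N\) yields the stated formula. An equivalent check is to multiply by \((1-z)^2\) and compare coefficients.

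For the bound, the trivial estimate \(|z|\le1\) gives \(|\mathcal F_N(z)|\le \frac1N\sum_{r=1}^{N-1}(N-r)\le N/2\). This covers \(z=1\) and every \(z\) with \(|1-z|\le 1/N\), since there \(N\le 1/|1-z|\).

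When \(|1-z|>1/N\), we use the identity. The first term has modulus at most \(1/|1-z|\). In the second term, \(|z(1-z^N)|\le 2\), so it is at most \(\frac{2}{N|1-z|^2}=\frac{2}{|1-z|}\cdot\frac{1}{N|1-z|}<\frac{2}{|1-z|}\). Hence \(|\mathcal F_N(z)|\le 3/|1-z|\) in this regime, and in the first regime \(N/2\le \frac12\min\{N,1/|1-z|\}\). Combining the two cases gives \(|\mathcal F_N(z)|\ll\min\{N,1/|1-z|\}\) with an absolute implied constant.

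The only delicate point is that the second term of the identity is of size \(1/(N|1-z|^2)\), which is not a priori bounded by \(1/|1-z|\). This is why the case split is taken exactly at \(|1-z|=1/N\); below that threshold the trivial bound is used instead.
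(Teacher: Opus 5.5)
Your proof is correct, and it shares the paper's starting point: the rewriting $\sum_{r=1}^{N-1}(N-r)z^r=\sum_{k=1}^{N-1}\sum_{r=1}^{k}z^r$ and the trivial bound $|\mathcal F_N(z)|\le (N-1)/2$.

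The only difference is in how you get the $1/|1-z|$ bound.
\begin{itemize}
\item You use the closed form. This forces your case split at $|1-z|=1/N$, because the term $\frac{z(1-z^N)}{N(1-z)^2}$ is not a priori $O(1/|1-z|)$.
\item The paper stays with the double sum. It bounds each inner partial geometric sum by $|z(1-z^k)/(1-z)|\le 2/|1-z|$, which gives $|\mathcal F_N(z)|\le 2/|1-z|$ for every $z\ne 1$ in the closed disc. No case distinction is needed.
\end{itemize}
The two results are equivalent up to the absolute constant. The paper's version is simply a line shorter.

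You could also remove your case split while keeping the closed form: the estimate $|1-z^N|=|1-z|\,\bigl|\sum_{j=0}^{N-1}z^j\bigr|\le N|1-z|$ shows that the second term is at most $1/|1-z|$ for all $z$.
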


\begin{proof}
For \(z\ne1\), a direct computation yields 
\[
\mathcal{F}_N(z)
=
\frac{(N-1)z-Nz^2+z^{N+1}}
{N(1-z)^2}
=
\frac z{1-z}
-
\frac{z(1-z^N)}
{N(1-z)^2},
\]
and
\[
\sum_{r=1}^{N-1}(N-r)z^r
=
\sum_{k=1}^{N-1}\sum_{r=1}^{k}z^r.
\]
From this, we obtain that
\[\left|\mathcal{F}_N(z)\right|\le \frac{1}{N}\sum_{k=1}^{N-1}\sum_{r=1}^{k}1=\frac{N-1}{2} \quad 
\text{and} \quad 
\left|\mathcal{F}_N(z)\right|\le\frac{1}{N}\sum_{k=1}^{N-1}\frac{2}{|1-z|}=\frac{2}{|1-z|}.\]

\end{proof}

Since \(f\in C_c^\infty(\mathbb R)\), for every \(M > 0\) there exists a constant \(C_M > 0\) such that
\[
\left| \widehat f\!\left( \frac{h}{N} \right) \right|
\le C_M \left( 1 + \frac{|h|}{N} \right)^{-M}.
\]
Furthermore, if \(h \in \Omega_N^{(1)}\), then by Remark~\ref{rem:local},
\[
|h|^{-\lambda} \ll_{\boldsymbol{\theta}} \rho_h(\boldsymbol{\theta}) \ll \frac{1}{N}.
\]
Hence
\[
|h| \gg_{\boldsymbol{\theta}} N^{1/\lambda}.
\]
Consequently, by Lemma~\ref{FTK} and the rapid decay of \(\widehat f\),
\[
\mathcal{E}_N^{(1)}(f)
\ll \frac{1}{N}
\sum_{|h| \gg_{\boldsymbol{\theta}} N^{1/\lambda}}
\left| \widehat f\!\left( \frac{h}{N} \right) \right| \cdot N
\ll_M N^{(1 - \frac{1}{\lambda})M + \frac{1}{\lambda}}.
\]
Choosing \(M > 1/(1-\lambda)\) makes the exponent negative, and therefore
\[
\mathcal{E}_N^{(1)}(f) \longrightarrow 0 \quad (N \to \infty).
\]

\subsection{Bounding the regular part}
 
In this subsection, we devote to estimating
\[\mathcal{E}_N^{(2)}(f)=\frac{1}{N}\sum_{h\in \Omega_{N}^{(2)}}\widehat f\left(\frac hN\right)\mathcal{F}_N\big(Z(h\boldsymbol{\theta})\big).\]
By Lemma \ref{FTK}, we get 
\begin{align*}
\mathcal{E}_N^{(2)}(f)&=\frac{1}{N}\sum_{h\in \Omega_{N}^{(2)}}\widehat f\left(\frac hN\right)\mathcal{F}_N\big(Z(h\boldsymbol{\theta})\big)\\
&=
\frac{1}{N}\sum_{h\in \Omega_{N}^{(2)}}\widehat f\left(\frac hN\right)\left(\frac {Z(h\boldsymbol{\theta})}{1-Z(h\boldsymbol{\theta})}-\frac{Z(h\boldsymbol{\theta})(1-Z(h\boldsymbol{\theta})^N)}{N(1-Z(h\boldsymbol{\theta}))^2}\right)\\
&=\frac{1}{N}\sum_{h\in \Omega_{N}^{(2)}}\widehat f\left(\frac hN\right)\frac {Z(h\boldsymbol{\theta})}{1-Z(h\boldsymbol{\theta})}-\frac{1}{N^{2}}\sum_{m=1}^{N}\sum_{h\in \Omega_{N}^{(2)}}\widehat f\left(\frac hN\right)\frac{Z(h\boldsymbol{\theta})^{m}}{1-Z(h\boldsymbol{\theta})}\\
&=\frac{1}{N}
\sum_{h \in \mathbb Z}
\widehat f\!\left(\frac{h}{N}\right)
\mathbf{1}_N(h\boldsymbol{\theta})
\frac{Z(h\boldsymbol{\theta})}{1 - Z(h\boldsymbol{\theta})}-\frac{1}{N^{2}}\sum_{m=1}^{N}\sum_{h\in \mathbb{Z}}\widehat f\left(\frac hN\right)\mathbf{1}_N(h\boldsymbol{\theta})\frac{Z(h\boldsymbol{\theta})^{m}}{1-Z(h\boldsymbol{\theta})},
\end{align*}
where
\[
\mathbf{1}_N(\mathbf{x})
=
\begin{cases}
1,
& |1 - Z(\mathbf{x})| \ge \dfrac{1}{N}, \\[6pt]
0,
& \text{otherwise}.
\end{cases}
\]

The following lemma allows us to replace the discontinuous cutoff \(\mathbf{1}_N\) by a smooth one.

\begin{lemma}\label{smooth}
Let \(N\in\mathbb{N}\) be sufficiently large that \(1/N+1/N^2<\eta_{\mathrm{loc}}\). Then there exists a smooth cutoff function \(g_N\in C^\infty(\mathbb{T}^2)\) such that
\[
\left|
\frac{1}{N}
\sum_{h\in\mathbb{Z}}
\widehat{f}\!\left(\frac{h}{N}\right)
\mathbf{1}_N(h\boldsymbol{\theta})
\frac{Z(h\boldsymbol{\theta})^m}{1-Z(h\boldsymbol{\theta})}
-
\frac{1}{N}
\sum_{h\in\mathbb{Z}}
\widehat{f}\!\left(\frac{h}{N}\right)
g_N(h\boldsymbol{\theta})
\frac{Z(h\boldsymbol{\theta})^m}{1-Z(h\boldsymbol{\theta})}
\right|
\ll_M
N^{(1-1/\lambda)M+1/\lambda},
\]
where the implied constant depends on the parameter \(M\) governing the rapid decay of \(\widehat{f}\).
\end{lemma}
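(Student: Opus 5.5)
We construct \(g_N\) as a smoothed version of \(\mathbf{1}_N\) that agrees with it away from a thin transition layer. Fix a smooth function \(\chi:[0,\infty)\to[0,1]\) with \(\chi(t)=0\) for \(t\le 1\) and \(\chi(t)=1\) for \(t\ge 1+1/N\); it can be chosen depending on \(N\), or obtained by rescaling a fixed profile. Set
\[
g_N(\mathbf{x}) := \chi\bigl(N\,|1-Z(\mathbf{x})|^2\bigr)^{1/2}\ \text{or more simply}\ g_N(\mathbf{x}):=\chi\bigl(N^2|1-Z(\mathbf{x})|^2\bigr)\text{-type profile}.
\]
Since \(|1-Z|^2=(1-Z)(1-\overline Z)\) is a trigonometric polynomial on \(\mathbb{T}^2\), any smooth function of it is smooth on \(\mathbb{T}^2\). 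We arrange that \(g_N(\mathbf{x})=\mathbf{1}_N(\mathbf{x})\) unless \(1/N-1/N^2\le |1-Z(\mathbf{x})|<1/N+1/N^2\). Precisely, we take \(g_N=\psi(|1-Z|)\), where \(\psi\) is smooth, vanishes on \([0,1/N-1/N^2]\), equals \(1\) on \([1/N,\infty)\), and satisfies \(0\le\psi\le1\). Because \(\psi\) is constant near \(0\), the composition \(\psi(|1-Z|)\) is smooth.

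The two sums then differ only over those \(h\) with \(|1-Z(h\boldsymbol\theta)|<1/N\) (for \(h\ne0\)) at which \(g_N(h\boldsymbol\theta)\ne0\). In particular \(|1-Z(h\boldsymbol\theta)|\ge 1/N-1/N^2\ge 1/(2N)\) there. The term \(h=0\) contributes nothing, since \(Z(0)=1\) and both \(\mathbf 1_N\) and \(g_N\) vanish at \(\mathbf{0}\). For each remaining \(h\), the difference of summands is bounded by
\[
\Bigl|\widehat f\!\left(\tfrac hN\right)\Bigr|\cdot\frac{|Z|^m}{|1-Z|}\le 2N\Bigl|\widehat f\!\left(\tfrac hN\right)\Bigr|,
\]
using \(|Z|\le1\).

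It remains to control which \(h\) can occur. Any such \(h\) satisfies \(|1-Z(h\boldsymbol\theta)|<1/N<\eta_{\mathrm{loc}}\), so Remark~\ref{rem:local} together with the \(2/3\)-Diophantine hypothesis gives \(|h|^{-\lambda}\ll\rho_h(\boldsymbol\theta)\ll 1/N\). Hence \(|h|\gg_{\boldsymbol\theta}N^{1/\lambda}\), exactly as in the bound for \(\mathcal E_N^{(1)}(f)\). The total difference is therefore
\[
\ll \frac1N\sum_{|h|\gg N^{1/\lambda}} N\,C_M\Bigl(1+\frac{|h|}N\Bigr)^{-M}\ll_M N\cdot N^{(1/\lambda-1)(1-M)}\cdot\frac{1}{N}\cdots,
\]
and summing the tail gives \(\ll_M N^{(1-1/\lambda)M+1/\lambda}\) after renaming \(M\). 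This matches the claimed bound; the same computation was already carried out for \(\mathcal E_N^{(1)}(f)\).

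The only delicate step is making sure the construction is genuinely smooth and does not introduce new \(h\) into the error set. The first concern is handled by the constancy of \(\psi\) near \(0\). For the second, note that the transition layer lies entirely inside \(\{|1-Z|<1/N\}\), so the Diophantine separation applies to every \(h\) in the difference set. The hypothesis \(1/N+1/N^2<\eta_{\mathrm{loc}}\) guarantees that Remark~\ref{rem:local} is applicable throughout the layer, including if one prefers a symmetric layer around \(1/N\).
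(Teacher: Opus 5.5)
Your proposal is correct and is essentially the paper's argument. It uses a smooth cutoff of $|1-Z|$ with a transition layer of width $1/N^2$ at $1/N$, the bound $|Z^m/(1-Z)|\ll N$ on that layer, Remark~\ref{rem:local} plus the Diophantine condition to force $|h|\gg N^{1/\lambda}$, and the rapid decay of $\widehat f$; your garbled tail display should simply read $\frac1N\cdot 2N\cdot N^{M}(N^{1/\lambda})^{1-M}$. The only difference is cosmetic: you put the layer in $(1/N-1/N^2,1/N)$, where $\mathbf 1_N=0$, while the paper's $g_N=\psi(N^2|1-Z|-N)$ transitions on $[1/N,1/N+1/N^2]$, which is why the paper needs $1/N+1/N^2<\eta_{\mathrm{loc}}$ rather than just $1/N<\eta_{\mathrm{loc}}$.
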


\begin{proof}

Let \(\psi \in C^\infty(\mathbb R)\) be a fixed function satisfying
\[
\psi(t) =
\begin{cases}
0, & t \le 0, \\
1, & t \ge 1,
\end{cases}
\quad \text{ and }
0 \le \psi(t) \le 1 \quad (t \in \mathbb R).
\]
Define
\begin{equation}\label{ap}
 g_N(\mathbf{x}) := \psi\!\left( \frac{N |1 - Z(\mathbf{x})| - 1}{1/N} \right)
= \psi\!\left( N^2 |1 - Z(\mathbf{x})| - N \right).   
\end{equation}
Then \(g_N \in C^\infty(\mathbb T^2)\) and
\[
g_N(\mathbf{x}) =
\begin{cases}
1, & |1 - Z(\mathbf{x})| \ge \dfrac{1}{N} + \dfrac{1}{N^2}, \\[6pt]
0, & |1 - Z(\mathbf{x})| \le \dfrac{1}{N} - \dfrac{1}{N^2},
\end{cases}
\]
with \(0 \le g_N \le 1\) everywhere. In the transition region
\[
T_N:=\left\{\mathbf{x}\in\mathbb{T}^{2}:\frac{1}{N}-\frac{1}{N^2}<|1-Z(\mathbf{x})|<\frac{1}{N}+\frac{1}{N^2}\right\},
\]
we have \(|g_N(\mathbf{x})-\mathbf{1}_N(\mathbf{x})|\le1\); outside this region, \(g_N(\mathbf{x})=\mathbf{1}_N(\mathbf{x})\).

Since \(1/N+1/N^2<\eta_{\mathrm{loc}}\), Remark~\ref{rem:local} implies that the difference between the sharp and smooth cutoffs is supported only in the transition region \(T_N\). Consequently,
\begin{align*}
&{}~~~~\left|
\frac{1}{N}
\sum_{h \in \mathbb Z}
\widehat f\!\left(\frac{h}{N}\right)
\mathbf{1}_N(h\boldsymbol{\theta})
\frac{Z(h\boldsymbol{\theta})^m}{1 - Z(h\boldsymbol{\theta})}
-
\frac{1}{N}
\sum_{h \in \mathbb Z}
\widehat f\!\left(\frac{h}{N}\right)
g_N(h\boldsymbol{\theta})
\frac{Z(h\boldsymbol{\theta})^m}{1 - Z(h\boldsymbol{\theta})}
\right|
\\
& = \frac{1}{N}
\left|
\sum_{h : h\boldsymbol{\theta} \in T_N}
\widehat f\!\left(\frac{h}{N}\right)
\Big(\mathbf{1}_N(h\boldsymbol{\theta})-g_N(h\boldsymbol{\theta})\Big)
\frac{Z(h\boldsymbol{\theta})^m}{1 - Z(h\boldsymbol{\theta})}
\right|
\\
&\ll_M
\frac{1}{N}
\sum_{h : |h| \gg N^{1/\lambda}}
\left( 1 + \frac{|h|}{N} \right)^{-M}
\cdot N
\ll_M
N^{(1 - 1/\lambda)M + 1/\lambda}.
\end{align*}

\end{proof}

Before proceeding with the estimates, we define the truncated Abel function, which will be used in the two auxiliary lemmas below. For \(m, N \in \mathbb{N}\), set
\[
\Phi_{N,m}(\mathbf{x})
=
\begin{cases}
g_N(\mathbf{x}) \, \dfrac{Z(\mathbf{x})^m}{1 - Z(\mathbf{x})},
& \mathbf{x} \in \mathbb T^2 \setminus \mathcal K, \\[6pt]
0,
& \mathbf{x} \in \mathcal K,
\end{cases}
\]
where \(g_N \in C^\infty(\mathbb T^2)\) is the smooth cutoff function defined in (\ref{ap}). Then
\[
\Phi_{N,m} \in C^\infty(\mathbb T^2).
\]

\begin{lemma}\label{Ab} We have 
\begin{equation*}
\lim_{N\to\infty}\int_{\mathbb{T}^{2}}\Phi_{N,m}(\mathbf{x})\dd \mathbf{x}=0.
\end{equation*}
\end{lemma}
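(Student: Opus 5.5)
The plan is dominated convergence, reducing the claim to the vanishing of \(\int_{\mathbb T^2} Z(\mathbf{x})^m/(1-Z(\mathbf{x}))\dd\mathbf{x}\). That vanishing then follows from the geometric series \(\sum_{k\ge m}Z^k\) and the orthogonality \(\int_{\mathbb T}e(kx)\dd x=0\) for \(k\ge1\). Throughout, \(m\ge1\) is fixed, as in the application where \(1\le m\le N\).

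\emph{Step 1: integrability of \(1/|1-Z|\).} I will show that \(1/|1-Z|\in L^1(\mathbb T^2)\). Away from \(\mathcal K\), \(|1-Z|\) is continuous and vanishes only on \(\mathcal K\), so it is bounded below outside the balls \(B(\mathbf{k}_i,\eta_{\mathrm{loc}})\). Inside these balls, Theorem~\ref{thm:local} gives \(|1-Z(\mathbf{x})|\asymp |x'|+|y'|^2\), where \((x',y')\) are local coordinates centred at \(\mathbf{k}_i\); this is the anisotropic distance with \(h=1\) applied at the point \(\mathbf{x}\). It then suffices to check that
\[
\int_{|y'|\le\eta}\int_{|x'|\le\eta}\frac{\dd x'\,\dd y'}{|x'|+y'^2}\ll\int_{|y'|\le\eta}\Bigl(1+\log\frac{1}{y'^2}\Bigr)\dd y'<\infty .
\]
I expect this to be the main (though mild) obstacle. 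The singularity is only logarithmic in \(y'\) after integrating in \(x'\), precisely because of the parabolic geometry \(|x|+y^2\) near the exceptional points. One must also make sure the comparison of Theorem~\ref{thm:local} is applied to the point \(\mathbf{x}\) itself, i.e.\ in the form of Remark~\ref{rem:local}.

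\emph{Step 2: passage to the limit.} Since \(0\le g_N\le1\), we have \(|\Phi_{N,m}|\le 1/|1-Z|\), which is integrable by Step 1. Moreover, for \(\mathbf{x}\notin\mathcal K\) we have \(g_N(\mathbf{x})=1\) as soon as \(1/N+1/N^2\le|1-Z(\mathbf{x})|\), so \(\Phi_{N,m}\to Z^m/(1-Z)\) pointwise off the null set \(\mathcal K\). Dominated convergence then yields
\[
\lim_{N\to\infty}\int_{\mathbb T^2}\Phi_{N,m}(\mathbf{x})\dd\mathbf{x}=\int_{\mathbb T^2}\frac{Z(\mathbf{x})^m}{1-Z(\mathbf{x})}\dd\mathbf{x}.
\]

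\emph{Step 3: the limit integral vanishes.} For \(K>m\), write
\[
\frac{Z^m(1-Z^{K-m})}{1-Z}=\sum_{k=m}^{K-1}Z^k .
\]
The left side is dominated by \(2/|1-Z|\). Also \(|Z(\mathbf{x})|=|\cos 2\pi y|<1\) for almost every \(\mathbf{x}\), so \(Z^{K-m}\to0\) almost everywhere. By dominated convergence again,
\[
\int_{\mathbb T^2} \frac{Z^m}{1-Z}=\lim_{K\to\infty}\sum_{k=m}^{K-1}\int_{\mathbb T^2} Z^k .
\]
By Fubini,
\[
\int_{\mathbb T^2} Z^k=\Bigl(\int_{\mathbb T}e(kx)\dd x\Bigr)\Bigl(\int_{\mathbb T}\cos^k(2\pi y)\dd y\Bigr),
\]
and the first factor is zero for every \(k\ge1\). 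Hence every partial sum is zero, and the lemma follows.
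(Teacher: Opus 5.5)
Your proof is correct and follows the same route as the paper: integrability of $1/|1-Z|$ from the parabolic comparison $|1-Z|\asymp|x|+y^2$ near $\mathcal K$, vanishing of $\int_{\mathbb T^2}Z^m/(1-Z)$ via the geometric series and $\int_{\mathbb T}e(kx)\dd x=0$, and then removal of the cutoff $g_N$. The main difference is that the paper controls $\int(g_N-1)Z^m/(1-Z)$ quantitatively, by $\iint_{|x|+y^2\le 2/N}(|x|+y^2)^{-1}\dd x\dd y\ll N^{-1/2}$ uniformly in $m$, whereas you use dominated convergence. Your argument is also uniform in $m$ if you bound the error by the $m$-independent quantity $\int(1-g_N)/|1-Z|$, and this is worth recording because the paper later averages the lemma over $1\le m\le N$.
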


\begin{proof}
Consider the untruncated Abel function
\begin{equation*}
G_{N,m}(\mathbf{x})
=
\frac{Z(\mathbf{x})^m}{1-Z(\mathbf{x})}.
\end{equation*}

We claim that \(G_{N,m} \in L^1(\mathbb T^2)\). Indeed, away from the two exceptional points \((0,0)\) and \((1/2,1/2)\), the function \(G_{N,m}\) is bounded. Near \((0,0)\), the Taylor expansion
\[
|1 - Z(\mathbf{x})| \asymp |x| + y^2
\]
yields
\[
|G_{N,m}(\mathbf{x})| \ll \frac{1}{|x| + y^2}.
\]
The singularity \(1/(|x|+y^2)\) is locally integrable near \((0,0)\) in \(\mathbb{T}^2\). By translation symmetry, the same local integrability holds near \((1/2,1/2)\). Hence \(G_{N,m}\in L^1(\mathbb{T}^2)\).

For almost every fixed \(y\), we have \(|\cos(2\pi y)|<1\), hence \(|Z(\mathbf{x})|<1\) almost everywhere. Expanding \(1/(1-Z)\) as a geometric series gives
\[
G_{N,m}(\mathbf{x})
=
\frac{Z(\mathbf{x})^m}{1-Z(\mathbf{x})}
=
\sum_{r=0}^\infty Z(\mathbf{x})^{m+r}
=
\sum_{r=m}^\infty Z(\mathbf{x})^r.
\]
Therefore,
\[
G_{N,m}(x,y)
=
\sum_{r=m}^\infty e(rx)\cos(2\pi y)^r.
\]

For each fixed \(y\) with \(|\cos(2\pi y)|<1\), termwise integration in \(x\) gives
\[
\int_0^1 G_{N,m}(x,y)\,dx
=
\sum_{r=m}^\infty \cos(2\pi y)^r \int_0^1 e(rx)\dd x
=
0,
\]
since \(\int_0^1 e(rx)\dd x=0\) for \(r\ge1\). Since \(G_{N,m}\in L^1(\mathbb{T}^2)\), Fubini's theorem yileds 
\[
\int_{\mathbb{T}^2} G_{N,m}(\mathbf{x})\dd \mathbf{x}
=
\int_0^1 \left(\int_0^1 G_{N,m}(x,y)\dd x\right)\dd y
=
0.
\]
Recalling that \(\Phi_{N,m}=g_NG_{N,m}\), we deduce that
\[
\int_{\mathbb{T}^2} \Phi_{N,m}(\mathbf{x})\dd{x}
=
\int_{\mathbb{T}^2} \bigl(g_N(\mathbf{x})-1\bigr)G_{N,m}(\mathbf{x})\dd{x}.
\]
The integrand on the right-hand side is supported in the region
\[
\left\{\mathbf{x}\in\mathbb{T}^2:|1-Z(\mathbf{x})|\le\frac{1}{N}+\frac{1}{N^2}\right\}.
\]
By the local comparison estimate in Theorem~\ref{thm:local}, we have
\[
\left|
\int_{\mathbb{T}^2} \Phi_{N,m}(\mathbf{x})\dd {x}
\right|
\ll
\iint_{|x|+y^2\le 2/N}
\frac{\dd x\dd y}{|x|+y^2}
\ll N^{-1/2}.
\]
Letting \(N\to\infty\) completes the proof.
\end{proof}

\begin{lemma}\label{0}
We have
\begin{equation*}
\boxed{
\lim_{N\to\infty}\frac1N
\sum_{h\in\mathbb Z}
\widehat f\left(\frac hN\right)
\Phi_{N,m}(h\boldsymbol{\theta})=0.
}
\end{equation*}
Removing the \(h=0\) term from the sum does not affect the limit. 
\end{lemma}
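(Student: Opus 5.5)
The plan is to expand \(\Phi_{N,m}\) in a Fourier series on \(\mathbb{T}^2\) and apply Poisson summation in \(h\). This turns the weighted sum along the orbit \(h\boldsymbol{\theta}\) into a sum over lattice frequencies \(\mathbf{k}\in\mathbb{Z}^2\), where only near-resonances of the linear form \(\mathbf{k}\cdot\boldsymbol{\theta}\) survive.

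\textbf{Step 1: the expansion.} Since \(\Phi_{N,m}\in C^\infty(\mathbb{T}^2)\), write \(\Phi_{N,m}(\mathbf{x})=\sum_{\mathbf{k}\in\mathbb{Z}^2}c_{\mathbf{k}}(N)e(\mathbf{k}\cdot\mathbf{x})\), with absolutely convergent coefficients. The inverse form of the Poisson formula already used for \(F_N\) gives
\[
\frac1N\sum_{h\in\mathbb{Z}}\widehat f\Bigl(\frac hN\Bigr)e(hx)=\sum_{j\in\mathbb{Z}}f\bigl(N(x+j)\bigr).
\]
Hence
\[
\frac1N\sum_{h}\widehat f\Bigl(\frac hN\Bigr)\Phi_{N,m}(h\boldsymbol{\theta})=\sum_{\mathbf{k}\in\mathbb{Z}^2}c_{\mathbf{k}}(N)\sum_{j\in\mathbb{Z}}f\bigl(N(\mathbf{k}\cdot\boldsymbol{\theta}+j)\bigr).
\]

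\textbf{Step 2: the zero frequency.} For \(\mathbf{k}=0\), the inner sum equals \(f(0)\) once \(N\) exceeds the support radius of \(f\). This term is therefore \(f(0)\int_{\mathbb{T}^2}\Phi_{N,m}\), which tends to \(0\) by Lemma~\ref{Ab}.

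\textbf{Step 3: bounds on the coefficients.} For \(\mathbf{k}\ne0\), the inner sum is \(O(1)\) and vanishes unless \(\|\mathbf{k}\cdot\boldsymbol{\theta}\|\le C_f/N\). I need two bounds on \(c_{\mathbf{k}}(N)\).
\begin{itemize}
\item A trivial bound: \(|c_{\mathbf{k}}(N)|\le\|\Phi_{N,m}\|_{L^1}\ll\log N\), by the local comparison of Theorem~\ref{thm:local} applied to \(1/(|x|+y^2)\) on the region \(|x|+y^2\gg1/N\).
\item A decay bound from integration by parts: each derivative of \(g_N\) costs at most a factor \(N^2\), and each derivative of \(1/(1-Z)\) on \(\{|1-Z|\ge 1/(2N)\}\) costs at most \(N\). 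This gives \(|c_{\mathbf{k}}(N)|\ll_s N^{A s}|\mathbf{k}|^{-s}\) for a fixed \(A\), say \(A=3\).
\end{itemize}
It follows that frequencies with \(|\mathbf{k}|\ge N^{A+\eta}\) contribute \(O(N^{-10})\) after choosing \(s\) large. What remains is
\[
\ll\log N\cdot\#\bigl\{0<|\mathbf{k}|\le N^{A+\eta}:\ \|k_1\sigma+k_2\delta\|\le C_f/N\bigr\}.
\]
This count must be shown to be \(o(1/\log N)\), and in fact zero for large \(N\). That would require a linear-form Diophantine bound \(\|k_1\sigma+k_2\delta\|\gg|\mathbf{k}|^{-\kappa}\) with \(\kappa(A+\eta)<1\). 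Since that is too strong, I would refine the coefficient bound. The cutoff \(g_N\) is only steep on the thin layer \(T_N\), so \(c_{\mathbf{k}}\) should decay like \(\min\{1,(|k_1|/N+|k_2|/\sqrt N)^{-s}\}\) up to logarithms. This reflects the parabolic shape \(|x|\lesssim1/N\), \(|y|\lesssim N^{-1/2}\) of the singular region. With this, the effective frequency box shrinks to \(|k_1|\le N^{1+\eta}\), \(|k_2|\le N^{1/2+\eta}\).

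\textbf{Main obstacle.} The hard step is excluding near-resonances \(\|k_1\sigma+k_2\delta\|\le C/N\) in this anisotropic box using only the \(2/3\)-Diophantine hypothesis. That hypothesis is phrased through \(\rho_h\), i.e.\ through joint approximation of \((h\sigma,2h\delta)\) to \(\mathcal K\), rather than through a linear form. I would try a transference argument of Khintchine/Mahler type: from a small value of \(\|k_1\sigma+k_2\delta\|\) with \(\mathbf{k}\) in the box, construct an \(h\) of controlled size with \(\rho_h(\boldsymbol{\theta})\) small, and contradict \(\rho_h\gg|h|^{-2/3-\varepsilon}\). The exponent bookkeeping, with \(1/\lambda>1\) against the anisotropic box sizes \(N\) and \(N^{1/2}\), is where I expect the difficulty.

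If transference fails, the fallback is direct. Split the \(h\)-sum dyadically for \(|h|\le N^{1+\eta}\), using the rapid decay of \(\widehat f\) for the rest. Decompose \(\Phi_{N,m}\) into dyadic shells \(|1-Z|\sim2^{-\ell}\), where \(\Phi\sim 2^{\ell}\). Then bound the number of \(h\) landing in each shell by the same \(\rho_h\) lower bound, combined with equidistribution from Theorem~\ref{LI} for the bulk.

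\textbf{The \(h=0\) term.} Since \(\mathbf{0}\in\mathcal K\), we have \(\Phi_{N,m}(0)=0\). So removing \(h=0\) changes nothing.
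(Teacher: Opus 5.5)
Your Steps 1--2 and the remark on $h=0$ agree with the paper. However, the proposal never proves the only nontrivial point, namely that the frequencies $\mathbf{k}\neq\mathbf{0}$ contribute $o(1)$, and the route you set up for it cannot work. Your target is that, for large $N$, no $\mathbf{k}\neq\mathbf{0}$ with $|k_1|\le N^{1+\eta}$, $|k_2|\le N^{1/2+\eta}$ satisfies $\|k_1\sigma+k_2\delta\|\le C_f/N$. This is false for every $\boldsymbol{\theta}$. Minkowski's linear forms theorem, applied with bounds $N^{1+\eta}$, $N^{1/2+\eta}$, $N^{-3/2-2\eta}$ whose product is $1$, always produces such a $\mathbf{k}$ with $\|\mathbf{k}\cdot\boldsymbol{\theta}\|\le N^{-3/2-2\eta}$. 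So no transference from the lower bound on $\rho_h$ can exclude it. Resonances are unavoidable, and a Fourier-side proof would have to show that the coefficients $c_{\mathbf{k}}(N)$ sitting on them are small in aggregate, which you do not attempt.

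For comparison, the paper's proof is exactly your Steps 1--2 followed by a qualitative step. For each fixed $\mathbf{n}\neq\mathbf{0}$, the Poisson-side sum vanishes for large $N$ by Theorem~\ref{LI}, and dominated convergence over $\mathbf{n}$ is then invoked using $\sum_{\mathbf{n}}|c_{\mathbf{n}}|<\infty$. Your concern about the $N$-dependence of the coefficients is justified. We have $\Phi_{N,m}\to Z^m/(1-Z)$ in $L^1(\mathbb{T}^2)$, and the limit has Fourier coefficients $2^{-r}\binom{r}{j}$ at $(r,2j-r)$, $r\ge m$, which are not summable. Hence $\sup_N|c_{\mathbf{n}}(N)|$ is not a summable majorant, and that step is not justified as written.

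What does work is your fallback, but it needs two ingredients the proposal does not supply.

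First, split off an $N$-independent bulk. Fix $\eta_0\in(0,\eta_{\mathrm{loc}}/2)$ and a smooth $\chi$ on $\mathbb{T}^2$ equal to $1$ on $\{|1-Z|\ge 2\eta_0\}$ and to $0$ on $\{|1-Z|\le\eta_0\}$. With $G=Z^m/(1-Z)$, write $\Phi_{N,m}=\chi G+(1-\chi)\Phi_{N,m}$, which is valid once $1/N+1/N^2\le\eta_0$. The first piece does not depend on $N$, so for it the Fourier/Theorem~\ref{LI} argument is legitimate and gives the limit $f(0)\int\chi G$. Since $\int G=0$ (proof of Lemma~\ref{Ab}), this limit is $O(\eta_0^{1/2})$.

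Second, for the near piece the lower bound on $\rho_h$ alone is not enough; you need the separation argument of Lemma~\ref{diag}.
\begin{enumerate}
\item The inequality $\rho_{h-h'}\le 2(\rho_h+\rho_{h'})$ forces two distinct elements of $\{|h|\sim 2^j,\ \rho_h\sim 2^{-\ell}\}$ to be $\gg 2^{\ell/\lambda}$ apart.
\item Hence this set has $\ll 2^j2^{-\ell/\lambda}+1$ elements, and it is empty unless $2^\ell\ll 2^{j\lambda}$.
\item On it $|\Phi_{N,m}(h\boldsymbol{\theta})|\ll 2^{\ell}$, and $\Phi_{N,m}(h\boldsymbol{\theta})=0$ unless $2^\ell\ll N$.
\end{enumerate}
So the near piece is bounded by a constant times
\[
\sum_{j\ge 0}\frac{1}{N}\Bigl(1+\frac{2^j}{N}\Bigr)^{-M}\sum_{\eta_0^{-1}\ll 2^{\ell}\ll \min\{N,\,2^{j\lambda}\}}2^{\ell}\bigl(2^{j}2^{-\ell/\lambda}+1\bigr)\ll \eta_0^{1/\lambda-1}+N^{\lambda-1}.
\]
Since $\lambda<1$, letting $N\to\infty$ and then $\eta_0\to 0$ proves the lemma.
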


\begin{proof}
As usual, for \(\mathbf{x}=(x_1,x_2)\in\mathbb{T}^2\) and \(\mathbf{n}=(n_1,n_2)\in\mathbb{Z}^2\), set \(\mathbf{n}\cdot\mathbf{x}:=n_1x_1+n_2x_2\). Since \(\Phi_{N,m}\in C^\infty(\mathbb{T}^2)\), it has an absolutely convergent Fourier series
\[
\Phi_{N,m}(\mathbf{x})
=
\sum_{\mathbf{n}\in\mathbb{Z}^2} c_{\mathbf{n}} e(\mathbf{n}\cdot\mathbf{x}),
\qquad
\sum_{\mathbf{n}\in\mathbb{Z}^2} |c_{\mathbf{n}}|<\infty,
\]
with
\[
c_{\mathbf{0}}=\int_{\mathbb{T}^2}\Phi_{N,m}(\mathbf{x})\dd \mathbf{x}.
\]
Therefore,
\begin{align*}
\frac{1}{N}
\sum_{h\in\mathbb{Z}}
\widehat{f}\!\left(\frac{h}{N}\right)
\Phi_{N,m}(h\boldsymbol{\theta})
&=
\sum_{\mathbf{n}\in\mathbb{Z}^2} c_{\mathbf{n}}
\left[
\frac{1}{N}
\sum_{h\in\mathbb{Z}}
\widehat{f}\!\left(\frac{h}{N}\right)
e\bigl(h\mathbf{n}\cdot\boldsymbol{\theta}\bigr)
\right]\\
&=
c_{\mathbf{0}}\cdot\frac{1}{N}
\sum_{h\in\mathbb{Z}}
\widehat{f}\!\left(\frac{h}{N}\right)
+
\sum_{\mathbf{n}\neq\mathbf{0}} c_{\mathbf{n}}
\left[
\frac{1}{N}
\sum_{h\in\mathbb{Z}}
\widehat{f}\!\left(\frac{h}{N}\right)
e\bigl(h\mathbf{n}\cdot\boldsymbol{\theta}\bigr)
\right].
\end{align*}

By the elementary bound \(\frac{1}{N}\sum_{h}|\widehat{f}(h/N)|\ll_f1\) and Lemma~\ref{Ab}, the first term satisfies
\[
\lim_{N\to\infty}
c_{\mathbf{0}}\cdot\frac{1}{N}
\sum_{h\in\mathbb{Z}}
\widehat{f}\!\left(\frac{h}{N}\right)
=0.
\]

Now fix \(\mathbf{n}\neq\mathbf{0}\). By Theorem~\ref{LI}, we have
\[
\omega(\mathbf{n}):=\mathbf{n}\cdot\boldsymbol{\theta}
=n_1\sigma+n_2\delta\notin\mathbb{Z}.
\]
Applying Poisson summation gives
\[
\frac{1}{N}
\sum_{h\in\mathbb{Z}}
\widehat{f}\!\left(\frac{h}{N}\right)e(h\omega(\mathbf{n}))
=
\sum_{k\in\mathbb{Z}} f\bigl(N(k+\omega(\mathbf{n}))\bigr).
\]
Since \(\omega(\mathbf{n})\notin\mathbb{Z}\), we have
\[
\operatorname{dist}(\omega(\mathbf{n}),\mathbb{Z})>0.
\]
As \(f\) has compact support, the sum on the right-hand side vanishes for all sufficiently large \(N\). Therefore, for each fixed \(\mathbf{n}\neq\mathbf{0}\),
\[
\lim_{N\to\infty}
\frac{1}{N}
\sum_{h\in\mathbb{Z}}
\widehat{f}\!\left(\frac{h}{N}\right)e(h\omega(\mathbf{n}))
=0.
\]
Together with the absolute summability \(\sum_{\mathbf{n}\neq\mathbf{0}}|c_{\mathbf{n}}|<\infty\), the dominated convergence theorem yields
\[
\lim_{N\to\infty}
\sum_{\mathbf{n}\neq\mathbf{0}} c_{\mathbf{n}}
\left[
\frac{1}{N}
\sum_{h\in\mathbb{Z}}
\widehat{f}\!\left(\frac{h}{N}\right)
e\bigl(h\mathbf{n}\cdot\boldsymbol{\theta}\bigr)
\right]
=0.
\]

Since \(\Phi_{N,m}(\mathbf{0})=0\), the \(h=0\) summand in the original sum is zero and may therefore be omitted without changing the limit. The proof is complete.
\end{proof}

To bound the regular part, we apply Lemma~\ref{smooth}, which gives
\[
\lim_{N\to\infty}\mathcal{E}_N^{(2)}(f)
=
\lim_{N\to\infty}\frac{1}{N}
\sum_{h\in\mathbb{Z}}
\widehat{f}\!\left(\frac{h}{N}\right)
\Phi_{N,1}(h\boldsymbol{\theta})
-
\lim_{N\to\infty}\frac{1}{N}\sum_{m=1}^{N}
\left(
\frac{1}{N}
\sum_{h\in\mathbb{Z}}
\widehat{f}\!\left(\frac{h}{N}\right)
\Phi_{N,m}(h\boldsymbol{\theta})
\right).
\]
We now apply Lemma~\ref{0} to handle both terms, which yields
\[
\lim_{N\to\infty}\mathcal{E}_N^{(2)}(f)=0,
\]
as required.

\section{Variance estimate}
\label{SM}
In this subsection, we devote to estimating the variance. 
Recall that \[
t_n(\boldsymbol{\omega}) = \sum_{i=1}^n \xi_i(\boldsymbol{\omega}),
\]
where
\[
\xi_i(\boldsymbol{\omega}) =
\begin{cases}
\sigma-\delta, & \omega_i = 0, \\
\sigma+\delta, & \omega_i = 1.
\end{cases}
\]
By (\ref{pc}),
\begin{align*}
R_N(f,\{t_n(\boldsymbol{\omega})\})=&\frac{1}{N^{2}}\sum_{h\in\mathbb{Z}}\widehat{f}(\frac{h}{N})\sum_{\substack{1 \le m,n \le N \\ m \neq n}} e\bigl(h(t_n(\boldsymbol{\omega}) - t_m(\boldsymbol{\omega}))\bigr)\\
\asymp&\frac{1}{N^{2}}\sum_{h\in\mathbb{Z}}\widehat{f}(\frac{h}{N})\sum_{1\le m<n\le N} e\bigl(h(t_n(\boldsymbol{\omega}) - t_m(\boldsymbol{\omega}))\bigr)\\
=&\frac{1}{N^{2}}\sum_{h\in\mathbb{Z}}\widehat{f}(\frac{h}{N})\sum_{1\le m<n\le N} e\bigl(h(\xi_{m+1}(\boldsymbol{\omega})+\cdots \xi_{n}(\boldsymbol{\omega}))\bigr).
    \end{align*}

Let \(\mathcal I_N\) denote the family of all nonempty intervals
\[
I = \{m+1, \dots, n\}
\quad (1 \le m < n \le N).
\] 
For each such interval $I$, define
\[
Y_I := \sum_{k \in I} \xi_k(\boldsymbol{\omega}).
\]
Then
\begin{equation*}
R_N(f)
\asymp
\frac{1}{N^2}
\sum_{h \in \mathbb Z}
\widehat f\!\left(\frac{h}{N}\right)
\sum_{I \in \mathcal I_N}
e(h Y_I).
\end{equation*}
 Since \(f\) is even, its Fourier transform satisfies \(\widehat f(-\xi) = \widehat f(\xi)\), and hence
\begin{equation*}
\operatorname{Var}(R_N(f))
\asymp
\frac{1}{N^4}
\sum_{h_1, h_2 \in \mathbb Z}
\widehat f\!\left(\frac{h_1}{N}\right)
\widehat f\!\left(\frac{h_2}{N}\right)
\sum_{I, J \in \mathcal I_{N}}
\operatorname{Cov}\bigl(e(h_1 Y_I), e(-h_2 Y_J)\bigr),
\end{equation*}
where  \(\operatorname{Cov}(X,Y) := \mathbb E[XY] - \mathbb E[X]\,\mathbb E[Y]\) denotes the covariance of two random variables \(X\) and \(Y\). 
Note that if \(h_1=0\), then \(e(h_1Y_I)=1\) is deterministic, so
\[
\operatorname{Cov}\bigl(1,e(-h_2Y_J)\bigr)=0.
\]
The case \(h_2=0\) is analogous, with \(\operatorname{Cov}(e(h_1Y_I),1)=0\). Hence the terms with \(h_1=0\) or \(h_2=0\) contribute nothing to the above sum, and we may restrict both summations to nonzero integers:
\begin{equation*}
\operatorname{Var}(R_N(f))
\asymp
\frac{1}{N^4}
\sum_{h_1,h_2\in\mathbb{Z}\setminus\{0\}}
\widehat{f}\!\left(\frac{h_1}{N}\right)
\widehat{f}\!\left(\frac{h_2}{N}\right)
\sum_{I,J\in\mathcal{I}_{N}}
\operatorname{Cov}\bigl(e(h_1Y_I),e(-h_2Y_J)\bigr).
\end{equation*}

We split the   sum over \((h_1,h_2)\) into the diagonal part \(h_1=h_2\) and the off-diagonal part \(h_1\neq h_2\), and denote the corresponding contributions by \(\operatorname{Var}^{\mathrm{diag}}(R_N(f))\) and \(\operatorname{Var}^{\mathrm{off}}(R_N(f))\), respectively. Thus
\[
\operatorname{Var}(R_N(f))
=
\operatorname{Var}^{\mathrm{diag}}(R_N(f))
+
\operatorname{Var}^{\mathrm{off}}(R_N(f)).
\]
We   estimate the two parts separately. The key ingredient in both parts is the covariance sum
\[
C_N(h_1,h_2):=\sum_{\substack{I,J\in\mathcal{I}_N\\ I\cap J\neq\varnothing}}
\operatorname{Cov}\bigl(e(h_1Y_I),e(-h_2Y_J)\bigr),
\]
which we analyze in the following subsection.

\subsection{Estimating the covariance sum}
Here and below, we write \(z_h := Z(h\boldsymbol{\theta})\) for notational convenience. 
\begin{lemma}\label{c1}
For \(N \in \mathbb{N}\) and  \(h_1, h_2 \in \mathbb{Z} \setminus \{0\}\), we have 
\[
\boxed{
\begin{aligned}
C_N(h_1, h_2)
&=
\sum_{S=1}^{N}
\biggl(
\sum_{R=1}^{S}
\bigl( z_{h_1-h_2}^{R} - z_{h_1}^{R} z_{-h_2}^{R} \bigr)
\biggr)
\biggl[
2  
\sum_{\substack{A, B \ge 1 \\ A + B \le N - S }}
z_{h_1}^{A} z_{-h_2}^{B}
\\
&\qquad \qquad \qquad \qquad 
+
\sum_{A=1}^{N-S}
(A + 1)(z_{h_1}^{A}+ z_{-h_2}^{A})+1
\biggr].
\end{aligned}
}
\]
\end{lemma}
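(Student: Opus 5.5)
The plan is to compute each covariance exactly using independence of the steps, and then to reorganise the double sum over overlapping pairs $(I,J)$ by the combinatorial shape of the overlap.

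\emph{Covariance of one pair.} Fix $I,J\in\mathcal I_N$ with $I\cap J\neq\varnothing$ and decompose $I\cup J$ into the disjoint blocks $I\setminus J$, $I\cap J$ and $J\setminus I$. Then
\[
h_1Y_I-h_2Y_J=\sum_{k\in I\setminus J}h_1\xi_k+\sum_{k\in I\cap J}(h_1-h_2)\xi_k+\sum_{k\in J\setminus I}(-h_2)\xi_k .
\]
Since $\mathbb E\,e(h\xi_k)=z_h$ and the $\xi_k$ are independent,
\[
\mathbb E\,e(h_1Y_I-h_2Y_J)=z_{h_1}^{a}\,z_{h_1-h_2}^{R}\,z_{-h_2}^{b},
\qquad
\mathbb E\,e(h_1Y_I)\,\mathbb E\,e(-h_2Y_J)=z_{h_1}^{a}\,(z_{h_1}z_{-h_2})^{R}\,z_{-h_2}^{b},
\]
where $R=|I\cap J|\ge1$, $a=|I\setminus J|$ and $b=|J\setminus I|$. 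Hence
\[
\operatorname{Cov}\bigl(e(h_1Y_I),e(-h_2Y_J)\bigr)=z_{h_1}^{a}z_{-h_2}^{b}\bigl(z_{h_1-h_2}^{R}-z_{h_1}^{R}z_{-h_2}^{R}\bigr).
\]
So the covariance depends only on the triple $(R,a,b)$, and the factor $z_{h_1-h_2}^R-z_{h_1}^Rz_{-h_2}^R$ appearing in the lemma is isolated.

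\emph{Classifying overlap shapes.} Two overlapping intervals fall into four types.
\begin{enumerate}[label=(\roman*)]
\item $I=J$, so $a=b=0$; this gives the term $+1$.
\item Proper staggered overlap. Either $I$ sticks out on the left by $A\ge1$ and $J$ on the right by $B\ge1$, or the reverse. Both cases give the weight $z_{h_1}^Az_{-h_2}^B$, which explains the factor $2\sum_{A,B\ge1}$.
\item $I\supsetneq J$ with $a=A\ge1$, $b=0$. The excess $A$ splits as $A_1+A_2$ with $A_1,A_2\ge0$ between the two sides, giving $A+1$ shapes. This produces $(A+1)z_{h_1}^A$.
\item $J\supsetneq I$, which symmetrically produces $(A+1)z_{-h_2}^A$.
\end{enumerate}
In every type the union is a single block of length $L=R+a+b$, and its position in the admissible range is free. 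The number of placements is therefore $\#\{\text{admissible positions}\}-L+1$, which is linear in $R$.

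\emph{Matching the stated form.} It remains to rewrite this placement count as the outer double sum. For fixed $R$ and fixed excess lengths, the claimed formula counts $\#\{S:\ R\le S\le N,\ N-S\ge A+B\}$. Interchanging $\sum_{S=1}^N\sum_{R=1}^S=\sum_{R=1}^N\sum_{S=R}^N$, I will interpret $S$ as a position parameter: for example, the index of the right end of $I\cap J$ measured so that $S\ge R$, with the remaining room $N-S$ absorbing the protruding pieces $A,B$. The constraint $A+B\le N-S$, respectively $A\le N-S$, encodes that the union fits inside the index range.

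I will then check that, for each shape type, the map from pairs $(I,J)$ to $(S,R,A,B)$ together with the split index in type (iii)/(iv) is a bijection onto the index set in the lemma. Summing the covariance formula over this bijection gives the identity.

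\emph{Main obstacle.} I expect the only real difficulty to be this bookkeeping step: fixing the convention for $S$ so that the boundary cases match exactly. The intervals live in $\{2,\dots,N\}$ because $m\ge1$, and off-by-one discrepancies between $N-L$ and $N-L+1$ must be resolved. I will first test the chosen parametrisation on tiny $N$ (say $N=2,3$) by direct enumeration, and adjust the meaning of $S$ until the counts agree, before writing the general bijection.
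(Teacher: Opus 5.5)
Your route is the paper's route: the exact covariance $z_{h_1}^{a}z_{-h_2}^{b}\bigl(z_{h_1-h_2}^{R}-z_{h_1}^{R}z_{-h_2}^{R}\bigr)$ from independence, the same four overlap classes with multiplicities $2$, $A+1$, $A+1$, $1$, a placement count, and a re-indexing by $S$. The gap is the bookkeeping step you defer. Tuning the meaning of $S$ cannot close it, because the mismatch is in a total count, not in a labelling.

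As you correctly observe, every $I\in\mathcal I_N$ lies in $\{2,\dots,N\}$. So a union block of length $L=A+B+R$ has $N-L$ placements, whereas the right-hand side gives each shape the weight $\#\{S: R\le S\le N-A-B\}=N-L+1$. The statement already fails at $N=1$. There $\mathcal I_1=\varnothing$, so $C_1(h_1,h_2)=0$, while the right-hand side equals $z_{h_1-h_2}-z_{h_1}z_{-h_2}=e\bigl((h_1-h_2)\sigma\bigr)\sin(2\pi h_1\delta)\sin(2\pi h_2\delta)$. This is nonzero whenever $\delta$ is irrational, which the Diophantine hypothesis forces. At $N=2$ the left side is the single term from $I=J=\{2\}$. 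The right side is the sum over all seven overlapping ordered pairs of subintervals of $\{1,2\}$. So the small-$N$ enumeration you plan would refute the identity rather than calibrate $S$.

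The identity is true if $\mathcal I_N$ is read as all nonempty subintervals of $\{1,\dots,N\}$. This is what the paper's proof tacitly uses, through the counts $2(N-A-B-R+1)$, $(A+1)(N-A-R+1)$ and $N-R+1$. With the literal definition of $\mathcal I_N$, the correct formula has $N-1$ in place of $N$ on the right-hand side. The generating-function lemma then extracts $[t^{N-1}]$ instead of $[t^{N}]$, which changes nothing in the later bounds.

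Once you fix the convention, you need no bijection or positional meaning of $S$. Write $N-A-B-R+1=\sum_{S=R}^{N-A-B}1$, $N-A-R+1=\sum_{S=R}^{N-A}1$ and $N-R+1=\sum_{S=R}^{N}1$, then interchange the sums over $R$ and $S$. The inner ranges that become empty for large $S$ take care of the boundary automatically.
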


\begin{proof}
When \(N=0\), we have \(C_N=0\); the summation on the right-hand side is empty. We assume \(N\ge1\), and we partition all ordered pairs of intersecting intervals \(I,J\) into four disjoint classes:
\[
\text{(a) strict crossing},\qquad
\text{(b) }I\supsetneq J,\qquad
\text{(c) }J\supsetneq I,\qquad
\text{(d) }I=J.
\]
For each class, we compute   its contribution to \(C_N(h_1, h_2)\) separately.

\bigskip
\noindent\textbf{\underline{Class (a): strict crossing.}}\quad 
For $1\le A,B, R\le N$ with $A+B+R\le N$, define
\[
\mathcal S_{A,B,R}
:=
\left\{
(I, J) \in \mathcal I_N \times \mathcal I_N
:\
\# (I \setminus J) = A,\
\# (J \setminus I) = B,\
\# (I \cap J) = R
\right\}.
\]
Fix a pair \((I, J) \in \mathcal S_{A,B,R}\). The covariance contribution of this pair is
\[
\operatorname{Cov}\bigl(e(h_1 Y_I), e(-h_2 Y_J)\bigr)
=
z_{h_1}^{A} \, z_{-h_2}^{B}
\bigl( z_{h_1-h_2}^{R} - z_{h_1}^{R} z_{-h_2}^{R} \bigr).
\]
Hence the total contribution of the class is
\[
\sum_{\substack{A, B, R \ge 1 \\ A + B + R \le N}}
\#  \mathcal S_{A,B,R} \cdot
z_{h_1}^{A} \, z_{-h_2}^{B}
\bigl( z_{h_1-h_2}^{R} - z_{h_1}^{R} z_{-h_2}^{R} \bigr).
\]
For fixed \(A, B, R\), a simple counting argument gives 
\[
\#  \mathcal S_{A,B,R} =2( N - A - B - R + 1).
\]
 Therefore, the contribution of Class (a) equals
\[
2
\sum_{\substack{A, B, R \ge 1 \\ A + B + R \le N}}
(N - A - B - R + 1)
\,
z_{h_1}^{A} \, z_{-h_2}^{B}
\bigl( z_{h_1-h_2}^{R} - z_{h_1}^{R} z_{-h_2}^{R} \bigr).
\]
Applying the elementary identity
\(
N-A-B-R+1
=
\sum_{S=R}^{N-A-B}1
\)
and interchanging the summations over \(R\) and \(S\) yields that the contribution is
\[
2
\sum_{S=1}^{N-2}
\sum_{R=1}^{S}
\bigl(z_{h_1-h_2}^{R}-z_{h_1}^{R}z_{-h_2}^{R}\bigr)
\sum_{\substack{A,B\ge1\\ A+B\le N-S}}
z_{h_1}^{A}z_{-h_2}^{B}.
\]
Here the summation over \(S\) may be extended to \(N\), since for \(S>N-2\) the last summation is empty and therefore vanishes.

\bigskip
\noindent\textbf{\underline{Class (b): \(J \subsetneq I\).}}\quad
For \(A,R\ge1\) with \(A+R\le N\), define
\[
\mathcal{S}_{A,R}
:=
\left\{
(I,J)\in\mathcal{I}_N\times\mathcal{I}_N:
\#(I\setminus J)=A,\
\#(J\setminus I)=0,\
\#(I\cap J)=\#J=R
\right\}.
\]
The covariance contribution of a fixed pair \((I,J)\in\mathcal{S}_{A,R}\) is
\[
\operatorname{Cov}\bigl(e(h_1Y_I),e(-h_2Y_J)\bigr)
=
z_{h_1}^{A}\bigl(z_{h_1-h_2}^{R}-z_{h_1}^{R}z_{-h_2}^{R}\bigr);
\]
counting yields
\[
\#\mathcal{S}_{A,R}=(A+1)(N-A-R+1).
\]
Therefore, the contribution of Class (b) equals
\begin{align*}
&~\sum_{\substack{A,R\ge1\\ A+R\le N}}
(A+1)(N-A-R+1)
\,
z_{h_1}^{A}\bigl(z_{h_1-h_2}^{R}-z_{h_1}^{R}z_{-h_2}^{R}\bigr)\\
=&~\sum_{S=1}^{N}
\sum_{R=1}^{S}
\bigl(z_{h_1-h_2}^{R}-z_{h_1}^{R}z_{-h_2}^{R}\bigr)
\sum_{A=1}^{N-S}
(A+1)z_{h_1}^{A}.
\end{align*}

\bigskip
\noindent\textbf{\underline{Class (c): \(I \subsetneq J\).}}\quad
By symmetry, the contribution equals
\[
\sum_{S=1}^{N}
\sum_{R=1}^{S}
\bigl( z_{h_1-h_2}^{R} - z_{h_1}^{R} z_{-h_2}^{R} \bigr)
\sum_{A=1}^{N-S}
(A + 1) z_{-h_2}^{A}.
\]

\bigskip
\noindent\textbf{\underline{Class (d): \(I = J\).}}\quad The contribution equals
\[
\sum_{R=1}^{N}
(N - R + 1)
\bigl( z_{h_1+h_2}^{R} - z_{h_1}^{R} z_{h_2}^{R} \bigr)=
\sum_{S=1}^{N}
\sum_{R=1}^{S}
\bigl( z_{h_1-h_2}^{R} - z_{h_1}^{R} z_{-h_2}^{R} \bigr).
\]

Combining the four contributions, we complete the proof.
\end{proof}

For a formal power series \(G(t)=\sum_{n\ge0}g_nt^n\), we write
\[
[t^N]G(t):=g_N
\]
for the coefficient of \(t^N\) in \(G(t)\). All rational expressions in \(t\) below are understood as formal power series expanded at \(t=0\). With this notation, the covariance sum admits the following closed form.

\begin{lemma}\label{c2}
For \(N\in\mathbb{N}\) and \(h_1,h_2\in\mathbb{Z}\setminus\{0\}\), we have
\begin{equation*}
C_N(h_1,h_2)
=[t^N]\frac{1}{(1-t)^2}
\left(\frac{1}{1-z_{h_1-h_2}t}-\frac{1}{1-z_{h_1}z_{-h_2}t}\right)
\left(\frac{1}{1-z_{h_1}t}+\frac{1}{1-z_{-h_2}t}-1\right)^2.
\end{equation*}
\end{lemma}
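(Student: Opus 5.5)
The plan is to read the formula of Lemma~\ref{c1} as a discrete convolution in the variable $S$ and then convert it into a product of generating functions. Throughout write $u:=z_{h_1}$, $v:=z_{-h_2}$, $w:=z_{h_1-h_2}$, and put
\[
d_R:=w^R-(uv)^R,\qquad a_S:=\sum_{R=1}^{S}d_R\quad(a_0=0),
\]
\[
b_K:=2\sum_{\substack{A,B\ge1\\A+B\le K}}u^Av^B+\sum_{A=1}^{K}(A+1)(u^A+v^A)+1 .
\]
With this notation Lemma~\ref{c1} reads $C_N(h_1,h_2)=\sum_{S=1}^{N}a_S\,b_{N-S}$. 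Since $a_0=0$, this equals $\sum_{S=0}^{N}a_Sb_{N-S}=[t^N]\,\mathcal A(t)\mathcal B(t)$, where $\mathcal A(t)=\sum_{S\ge0}a_St^S$ and $\mathcal B(t)=\sum_{K\ge0}b_Kt^K$ are formal power series. It then remains to identify $\mathcal A$ and $\mathcal B$ separately.

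\textbf{The series $\mathcal A$.} Since $a_S$ is a partial sum of the $d_R$, multiplication by $(1-t)^{-1}$ gives
\[
\mathcal A(t)=\frac{1}{1-t}\sum_{R\ge1}d_Rt^R=\frac1{1-t}\left(\frac1{1-wt}-\frac1{1-uvt}\right).
\]
Here the $R=0$ terms of the two geometric series cancel, which is why the sum may start at $R=0$ and produce exactly the displayed difference.

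\textbf{The series $\mathcal B$.} Let $P=(1-ut)^{-1}$ and $Q=(1-vt)^{-1}$. The plan is to show that $(1-t)\mathcal B(t)=(P+Q-1)^2$, that is, that the right-hand side has $t^n$-coefficient
\[
b_n-b_{n-1}=\begin{cases}1, & n=0,\\[2pt] 2\sum_{A+B=n,\,A,B\ge1}u^Av^B+(n+1)(u^n+v^n), & n\ge1.\end{cases}
\]
To see this, expand
\[
(P+Q-1)^2=1+\bigl[2(P-1)+(P-1)^2\bigr]+\bigl[2(Q-1)+(Q-1)^2\bigr]+2(P-1)(Q-1).
\]
Then $(P-1)^2=\sum_{A,A'\ge1}(ut)^{A+A'}$ has $t^n$-coefficient $(n-1)u^n$, so the first bracket contributes $(n+1)u^n$ for $n\ge1$. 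The second bracket contributes $(n+1)v^n$ by symmetry, and the cross term contributes $2\sum_{A+B=n,\,A,B\ge1}u^Av^B$. Multiplying by $(1-t)^{-1}$ takes partial sums up to $K$, which recovers $b_K$ exactly.

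Combining the two computations,
\[
\mathcal A\mathcal B=\frac1{(1-t)^2}\left(\frac1{1-wt}-\frac1{1-uvt}\right)\bigl(P+Q-1\bigr)^2,
\]
and extracting $[t^N]$ gives the formula of Lemma~\ref{c2}. The only delicate point is the index bookkeeping: checking the boundary conventions ($a_0=0$, and that $b_0=1$ is consistent with the empty sums in Lemma~\ref{c1}) and the coefficient identity $(n-1)+2=n+1$. Everything else is mechanical manipulation of geometric series.
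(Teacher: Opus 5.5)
Your proposal is correct and follows the paper's proof essentially verbatim. You read Lemma~\ref{c1} as the convolution $\sum_{S=0}^{N} D_S E_{N-S}$ (using $D_0=0$), identify the first series as $\frac{1}{1-t}\bigl(\frac{1}{1-wt}-\frac{1}{1-uvt}\bigr)$, and show that the second is $\frac{1}{1-t}(P+Q-1)^2$; the only cosmetic difference is that you check this last identity coefficientwise via the expansion in $P-1$ and $Q-1$, whereas the paper quotes the generating functions of the double sum and of $\sum_A (A+1)z^A$ directly, which is the same computation since $P^2-1=2(P-1)+(P-1)^2$.
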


\begin{proof}
Write
\begin{align*}
D_S
&:=
\sum_{R=1}^{S}
\bigl(z_{h_1-h_2}^{R}-z_{h_1}^{R}z_{-h_2}^{R}\bigr),\\
E_T
&:=
2\sum_{\substack{A,B\ge1\\ A+B\le T}}
z_{h_1}^{A}z_{-h_2}^{B}
+\sum_{A=1}^{T}(A+1)\bigl(z_{h_1}^{A}+z_{-h_2}^{A}\bigr)
+1.
\end{align*}
By Lemma~\ref{c1}, we obtain
\[
C_N(h_1,h_2)=\sum_{S=1}^{N}D_S E_{N-S}=\sum_{S=0}^{N}D_S E_{N-S},
\]
where the last equality uses \(D_0=0\). Thus the sequence \((C_N)\) is the convolution of the sequences \((D_S) \) and \((E_T) \). 
Consequently, the generating function of \((C_N)\) is the product of the generating functions of \((D_S)\) and \((E_T)\):
\begin{equation*}
\sum_{N=0}^{\infty}C_N(h_1,h_2)t^N
=
 \sum_{S=0}^{\infty}D_S t^S \cdot 
 \sum_{T=0}^{\infty}E_T t^T.
\end{equation*}
We now compute the two generating functions on the right-hand side.
Substituting the definition of \(D_S\) and interchanging the order of summation, we obtain
\[
\begin{aligned}
\sum_{S=0}^{\infty}D_S t^S
&=
\sum_{S=0}^{\infty}\sum_{R=1}^{S}
\bigl(z_{h_1-h_2}^{R}-z_{h_1}^{R}z_{-h_2}^{R}\bigr)t^S=
\sum_{R\ge1}
\bigl(z_{h_1-h_2}^{R}-z_{h_1}^{R}z_{-h_2}^{R}\bigr)
\sum_{S\ge R}t^S\\
&=
\sum_{R\ge1}
\bigl(z_{h_1-h_2}^{R}-z_{h_1}^{R}z_{-h_2}^{R}\bigr)\frac{t^R}{1-t}
=
\frac{1}{1-t}
\left(
\frac{1}{1-z_{h_1-h_2}t}
-
\frac{1}{1-z_{h_1}z_{-h_2}t}
\right).
\end{aligned}
\]
Using the identities
\begin{align*}
2\sum_{T\ge0}\sum_{\substack{A,B\ge1\\ A+B\le T}}
z_{1}^{A}z_{2}^{B}t^T
&=
\frac{2z_{1}z_{2}t^2}{(1-t)(1-z_{1}t)(1-z_{2}t)},\\
\sum_{T\ge0}\sum_{A=1}^{T}(A+1)z^{A}t^T
&=\frac{1}{1-t}\biggl(
\frac{1}{(1-zt)^2}-1\biggr),
\end{align*}
we directly compute the second generating function and obtain
\[
\sum_{T\ge0}E_T t^T
=
\frac{1}{1-t}
\left(
\frac{1}{1-z_{h_1}t}
+
\frac{1}{1-z_{-h_2}t}
-
1
\right)^2.
\]
Combining these completes the proof.
\end{proof}

Let \(\mathbb{D}=\{z\in\mathbb{C}:|z|<1\}\) and \(\overline{\mathbb{D}}=\{z\in\mathbb{C}:|z|\le1\}\) denote the open and closed unit discs, respectively.

\begin{lemma}\label{XY}
Let \(X, Y \in \overline{\mathbb{D}}\). Then for   \(N\in\mathbb{N}\), the following statements hold.

\begin{enumerate}
\item[{(i)}] If \(X = Y\), then
\[
\left|
[t^N] \frac{1}{(1-t)^2(1-Xt)^2}
\right|
\ll
\frac{N}{(|1-X| + N^{-1})^2}.
\]

\item[(ii)]  If \(X\neq Y\), then, for some \(\xi\) lying on the segment \([X,Y]\),
\begin{align*}
&\left|
[t^N]\frac{1}{(1-t)^2(1-Xt)(1-Yt)}
\right|\\[6pt]
\ll~&
\min\left\{
\frac{N}{(|1-\xi|+N^{-1})^2},~
\frac{\frac{N}{|1-X|+N^{-1}}+\frac{N}{|1-Y|+N^{-1}}}{|X-Y|}
\right\}
+\frac{N}{|1-Y|+N^{-1}}.
\end{align*}

\item[(iii)] If 
 \(
|1-X| > N^{-r}\) and  \(|1-Y| > N^{-r} 
\) for \(r>0\),
then
\[
\left|
[t^N] \frac{1}{(1-t)^2(1-Xt)(1-Yt)}
\right|
\ll N^{1+5r}.
\]

\end{enumerate}
\end{lemma}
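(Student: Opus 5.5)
The idea is to write every coefficient as an explicit finite convolution and bound it using only $|X|,|Y|\le 1$. We have
\[
[t^N]\frac{1}{(1-t)^2}\,G(t)=\sum_{k=0}^{N}(N-k+1)\,g_k ,
\]
where $G(t)=\sum_k g_k t^k$. Since $N-k+1\le N+1$, each of (i)--(iii) reduces to bounding partial sums of the coefficients $g_k$. The key partial-sum bounds are
\[
\Bigl|\sum_{k\le K}X^k\Bigr|\le \min\{K+1,\,2/|1-X|\}\ll \frac{1}{|1-X|+N^{-1}} \qquad (K\le N).
\]
The same bound holds for the weighted sums $\sum_{k\le K}(k+1)X^k$ after multiplying by an extra factor $\min\{N,|1-X|^{-1}\}$. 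This is exactly the Fejér-type estimate of Lemma~\ref{FTK} applied one level up. To turn estimates on partial sums into estimates on sums weighted by $N-k+1$, I will use Abel summation. It costs nothing because the weights $N-k+1$ are monotone.

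For (i), the coefficients are $g_k=(k+1)X^k$. I will write
\[
\sum_k (N-k+1)(k+1)X^k=\sum_{j\le N}\ \sum_{k\le j}(k+1)X^k .
\]
Then I bound the inner sum by $\min\{N^2,\,N/|1-X|,\,|1-X|^{-2}\}$ (plus lower-order terms), using summation by parts on $\sum (k+1)X^k$ via the closed form $\bigl(1-(K+2)X^{K+1}+(K+1)X^{K+2}\bigr)/(1-X)^2$. Summing over $j$ then gives $N(|1-X|+N^{-1})^{-2}$. The $\min$ with $N^2$ handles the regime $|1-X|\le N^{-1}$, and the closed form handles the complementary regime.

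For (ii), the coefficients are $g_k=\sum_{a+b=k}X^aY^b=(X^{k+1}-Y^{k+1})/(X-Y)$. There are two representations, giving the two terms of the minimum.

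1. Mean value form: $g_k=\frac{1}{X-Y}\int_{[Y,X]}(k+1)w^k\,dw$. The total expression is then an average over $w\in[X,Y]$ of the quantity from (i) with $X$ replaced by $w$. Passing from the average to a single point $\xi$ on the segment would give the first term. This step is the delicate one, because the mean value theorem fails for complex-valued functions. Instead, I will bound the integrand in absolute value, choose $\xi$ to maximize $(|1-w|+N^{-1})^{-2}$ over the segment, and accept the loss. The additive term $N/(|1-Y|+N^{-1})$ absorbs the error from the endpoint correction, namely the shift by one in $k+1$ together with the $Y$-contribution.
2. Partial-fraction form: $\frac{1}{(1-Xt)(1-Yt)}=\frac{1}{X-Y}\Bigl(\frac{X}{1-Xt}-\frac{Y}{1-Yt}\Bigr)$. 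Each piece is a $(1-t)^{-2}(1-Xt)^{-1}$ coefficient, which equals $\sum_{j\le N}\sum_{k\le j}X^k\ll N/(|1-X|+N^{-1})$ by the partial-sum bound above. This gives the second term of the minimum.

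For (iii), I simply combine the cases. If $|X-Y|\ge N^{-2r}$, say, then the partial-fraction bound gives $N\cdot N^{r}\cdot N^{2r}$. Otherwise the mean-value bound applies with $|1-\xi|\ge |1-X|-|X-Y|\gg N^{-r}$ (for $N$ large), giving $N^{1+2r}$; alternatively the trivial $N^3$ bound can be used in the worst case. Either way the result is at most $N^{1+5r}$ for $N$ large, and the crude exponent $5r$ leaves ample room. I expect the main obstacle to be making the choice of $\xi$ in (ii) rigorous. My fallback is to state the bound with $\sup_{\xi\in[X,Y]}$, which implies the existence of such a $\xi$ by compactness.
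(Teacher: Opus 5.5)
Your argument for (i) fails as described. Because the first term of the minimum in (ii) and the small-$|X-Y|$ case of (iii) both come from averaging (i) over the segment, the gap carries over to those places too. You bound each inner sum $\sum_{k\le j}(k+1)X^k$ by $\min\{N^2,|1-X|^{-2}\}$ up to ``lower-order terms'', and then multiply by the $N+1$ values of $j$. This is equivalent to claiming that the extra factor for the weighted partial sums is $\min\{N,|1-X|^{-1}\}$. But
\[
\sum_{k\le j}(k+1)X^k=\frac{1-X^{j+1}}{(1-X)^2}-\frac{(j+1)X^{j+1}}{1-X}.
\]
For $|X|=1$ the second term has modulus $(j+1)/|1-X|$. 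This dominates $|1-X|^{-2}$ exactly in the regime $|1-X|\ge N^{-1}$ where you invoke the closed form, so it is not lower order. For $X=-1$ the inner sums are $1,-1,2,-2,3,\dots$, and summing their absolute values over $j\le N$ gives $\asymp N^2$. The true coefficient $[t^N](1-t^2)^{-2}$ is at most $N/2+1$. The correct extra factor is $N$. With it, termwise summation only yields $N^2/(|1-X|+N^{-1})$, which misses (i) by a factor of order $N|1-X|$. The cancellation across $j$ is essential.

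There are two ways to repair this. You can sum the oscillating term over $j$ exactly: it is again a weighted geometric sum, contributing $\ll N|1-X|^{-2}+|1-X|^{-3}$. More simply, write $\frac{1}{(1-t)^2(1-Xt)^2}=\bigl(\frac{1}{(1-t)(1-Xt)}\bigr)^2$. Then the coefficient is $\sum_{k=0}^{N}S_k(X)S_{N-k}(X)$ with $S_k(X)=\sum_{j\le k}X^j$, and your own bound $|S_k(X)|\ll(|1-X|+N^{-1})^{-1}$ gives (i) immediately. (The paper instead uses the explicit closed form of $F_{N+1}'(X)$.)

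With (i) in hand, the rest of your plan is correct. The partial-fraction bound via $|F_N(X)|\ll N/(|1-X|+N^{-1})$ is fine. Part (iii) works with your threshold $N^{-2r}$ and even gives $N^{1+3r}$.

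Your representation $g_k=\int_0^1(k+1)w_s^k\,ds$ with $w_s=Y+s(X-Y)\in\overline{\mathbb D}$ is in fact cleaner than the paper's route. It shows the coefficient is exactly the average over $s$ of the quantity in (i) evaluated at $w_s$. Hence the first term of the minimum holds with $\xi$ a maximizer of $(|1-w|+N^{-1})^{-2}$ on $[X,Y]$, with no additive term and no endpoint correction needed. It also avoids the paper's appeal to an exact mean value theorem $F_N(X)-F_N(Y)=F_N'(\xi)(X-Y)$. That equality is not valid for complex-valued functions; only the inequality with $\max_{[X,Y]}|F_N'|$ holds, though that suffices.
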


\begin{proof}
Define
\[
F_N(X):=[t^N]\frac{1}{(1-t)^2(1-Xt)}.
\]
Then
\[
F_N(X)=X^N+2X^{N-1}+\cdots+NX+(N+1)
=\sum_{k=0}^{N}\sum_{j=0}^{N-k}X^j,
\]
which yields the trivial estimate
\(
|F_N(X)|\ll N^2.
\)

If \(X \neq 1\), the closed form
\[
F_N(X)
=
\frac{N+1}{1-X}
-
\frac{X}{(1-X)^2}
+
\frac{X^{N+2}}{(1-X)^2}
\]
yields
\[
|F_N(X)|
\le
\left| \frac{N+1}{1-X} \right|
+
\left| \frac{1 - X^{N+1}}{(1-X)^2} \right|
\ll \frac{N}{|1-X|}.
\]

Combining the two bounds, we obtain, for every \(X \in \mathbb D\),
\begin{equation}\label{X}
|F_N(X)|
\ll \min\left\{ N^2, \frac{N}{|1-X|} \right\}
\ll \frac{N}{|1-X| + N^{-1}}.
\end{equation}

\medskip

(i)  For \(X = Y\),
 an analogous computation to that for \(F_N(X)\) yields
\[
\left|
[t^N] \frac{1}{(1-t)^2(1-Xt)^2}
\right|
=
|F_{N+1}'(X)|=\left|\sum_{k=0}^{N} \sum_{j=1}^{N-k} j X^{j-1}\right|\ll N^{3},
\]
and the closed form of \(F_{N+1}'(X)\) gives
\(
|F_{N+1}'(X)|
\ll
{N}/{|1-X|^2}.
\)
Hence
\[
|F_{N+1}'(X)|
\ll
\frac{N}{(|1-X| + N^{-1})^2}.
\]

\medskip

(ii)  For \(X \neq Y\), by the partial fraction decomposition
\[
\frac{1}{(1-Xt)(1-Yt)}
=
\frac{1}{X-Y}
\left(
\frac{X}{1-Xt} - \frac{Y}{1-Yt}
\right),
\]
we obtain
\begin{equation}\label{XY-1}
\begin{aligned}
&\left|
[t^N] \frac{1}{(1-t)^2(1-Xt)(1-Yt)}
\right|
=
\left|
\frac{X F_N(X) - Y F_N(Y)}{X-Y}
\right|
\\[6pt]
\le~&
\frac{|X| \, |F_N(X) - F_N(Y)|}{|X-Y|}
+
|F_N(Y)|.
\end{aligned}
\end{equation}
By the mean value theorem applied along the line segment \([X,Y]\), there exists  \(\xi\) lying  on the segment \([X,Y]\) such that
\[
F_N(X) - F_N(Y) = F_N'(\xi)(X-Y).
\]
Together with  
\(
|F_N'(Z)| \ll  {N}/{(|1-Z|+N^{-1})^2},
\)
we obtain
\[
|F_N(X) - F_N(Y)|
=
|X-Y| \, |F_N'(\xi)|
\ll
|X-Y|\cdot\frac{N}{(|1-\xi|+N^{-1})^2}.\]
Substituting this into the inequality above yields
\[
\left|
[t^N] \frac{1}{(1-t)^2(1-Xt)(1-Yt)}
\right|
\ll
\frac{N}{(|1-\xi|+N^{-1})^2}
+
\frac{N}{|1-Y|+N^{-1}}.
\]

On the other hand, combining (\ref{X}) and (\ref{XY-1}) gives
\begin{align*}
 &\left|
[t^N] \frac{1}{(1-t)^2(1-Xt)(1-Yt)}
\right|
\ll
\frac{|F_N(x)|+|F_N(Y)|}{|X-Y|}+F_N(Y)\\[6pt]
\ll~ & \frac{\frac{N}{|1-X|+N^{-1}}+\frac{N}{|1-Y|+N^{-1}}}{|X-Y|} +\frac{N}{|1-Y|+N^{-1}}.   
\end{align*}

\medskip

(iii) If \(|X-Y|\ge N^{-4r}\), we infer directly from (ii) that
\[
\left|
[t^N]\frac{1}{(1-t)^2(1-Xt)(1-Yt)}
\right|
\ll N^{1+5r}.
\]
If instead \(|X-Y|<N^{-4r}\), then, for any \(\xi\) on the segment \([X,Y]\), we have
\[
|1-\xi|>N^{-r}-N^{-4r}\gg N^{-r}.
\]
Substituting this into (ii) once more, we reach
\[
\left|
[t^N]\frac{1}{(1-t)^2(1-Xt)(1-Yt)}
\right|
\ll N^{1+5r}.
\]


\end{proof}

\begin{lemma}\label{XYY}
Let \(X,Y\in\overline{\D}\).  Then for \(N\in\mathbb{N}\),  the following statements hold.

\begin{enumerate}

\item[(i)] If \(X = Y\), then
\[
\left|
[t^N] \frac{1}{(1-t)^2(1-Xt)^3}
\right|
\ll
\frac{N}{|1-X|^3}+\frac{N^{2}}{|1-X|^{2}}.
\]

 \item[{(ii)}] If \(X \not=Y\), then
\begin{align*}
&\left|
[t^N] \frac{1}{(1-t)^2(1-Xt)(1-Yt)^{2}}
\right|
\\[6pt]
\ll~&
N \left[
\frac{(|1-X|+N^{-1})^{-1}+(|1-Y|+N^{-1})^{-1}}{|X-Y|^{2}}
+
\frac{(|1-Y|+N^{-1})^{-2}}{|X-Y|}
\right].
\end{align*}
\end{enumerate}

\end{lemma}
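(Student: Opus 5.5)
We follow the scheme of Lemma~\ref{XY}: reduce both coefficients to derivatives and divided differences of the one-variable function
\[
F_N(X)=[t^N]\frac{1}{(1-t)^2(1-Xt)}=\sum_{k=0}^{N}\sum_{j=0}^{N-k}X^j .
\]
Expanding in $t$ gives
\[
[t^N]\frac{1}{(1-t)^2(1-Xt)^{k+1}}=\frac{1}{k!}\frac{d^k}{dX^k}\bigl(X^kF_N(X)\bigr).
\]
For (i) we take $k=2$. This leaves $X^2F_N''+4XF_N'+2F_N$, and it suffices to bound $F_N''(X)$.

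\textbf{Part (i).} Differentiate the closed form
\[
F_N(X)=\frac{N+1}{1-X}-\frac{X}{(1-X)^2}+\frac{X^{N+2}}{(1-X)^2}
\]
twice. The term $(N+1)/(1-X)$ gives $O(N/|1-X|^3)$, and the last two terms combine to $-X(1-X^{N+1})/(1-X)^2$. The key observation is that $(1-X^{N+1})/(1-X)=\sum_{j\le N}X^j$ has size $\ll\min\{N,|1-X|^{-1}\}$. Its derivatives are also controlled: one derivative costs a factor of at most $\min\{N,|1-X|^{-1}\}$. To see this, write them via Cauchy-type formulas, or directly as
\[
\frac{d}{dX}\frac{1-X^{N+1}}{1-X}=\frac{1-X^{N+1}}{(1-X)^2}-\frac{(N+1)X^N}{1-X}.
\]
Carrying this out, every term of the second derivative of $X(1-X^{N+1})/(1-X)^2$ is bounded by $N/|1-X|^3$ or $N^2/|1-X|^2$. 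The worst term is $N^2X^{N-1}/(1-X)^2$, coming from differentiating $X^{N+2}$ twice, and this is exactly why the $N^2/|1-X|^2$ term appears. The lower-order terms $F_N'$ and $F_N$ are bounded by $N/|1-X|^2$ and $N/|1-X|$, both dominated by the same right side since $|1-X|\le 2$.

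\textbf{Part (ii).} Use the partial fraction decomposition
\[
\frac{1}{(1-Xt)(1-Yt)^2}=\frac{X^2}{(X-Y)^2}\frac{1}{1-Xt}-\frac{XY}{(X-Y)^2}\frac{1}{1-Yt}-\frac{Y}{X-Y}\frac{1}{(1-Yt)^2}.
\]
This can be checked by comparing residues, or by noting that it is the divided difference in the $Y$ variable of the case treated in Lemma~\ref{XY}. Taking $[t^N]$ after multiplying by $(1-t)^{-2}$ gives
\[
\frac{X^2F_N(X)-XYF_N(Y)}{(X-Y)^2}-\frac{Y\,G_N(Y)}{X-Y},\qquad G_N(Y)=[t^N]\frac{1}{(1-t)^2(1-Yt)^2}.
\]
By \eqref{X}, $|F_N(\cdot)|\ll N(|1-\cdot|+N^{-1})^{-1}$. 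By Lemma~\ref{XY}(i), $|G_N(Y)|\ll N(|1-Y|+N^{-1})^{-2}$. Bounding each term in absolute value, using $|X|,|Y|\le1$, yields exactly the stated bound. No cancellation is needed, which is why the bound carries $|X-Y|^{-2}$.

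\textbf{Main obstacle.} The only delicate step is (i): checking that after two differentiations no term worse than $N^2/|1-X|^2$ survives. In particular one must handle terms like $N^3X^{N}/(1-X)$ from naive differentiation of $X^{N+2}$, which do not arise. We would verify this by working with the grouped expression $(1-X^{N+1})/(1-X)$ rather than the separated closed form. If needed, we would instead use the trivial bound $|F_N''|\ll N^4$ in the range $|1-X|<N^{-1}$, where it is $\ll N^2/|1-X|^2$.
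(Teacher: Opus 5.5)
Your proposal is correct and follows essentially the paper's route. In part (i) you reduce to $F_N''$, and your identity $\frac12\frac{d^2}{dX^2}(X^2F_N)$ actually supplies the computation the paper only asserts. In part (ii) your three-term partial fraction is exactly the $Y$-derivative of the two-term decomposition that the paper differentiates as $\partial_Y F^*_{N+1}(X,Y)$, and it uses the same inputs, \eqref{X} and Lemma~\ref{XY}(i).

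One correction: the heuristic that each derivative of $\sum_{j\le N}X^j$ costs only a factor $\min\{N,|1-X|^{-1}\}$ is false on $|X|=1$. Your own formula for the first derivative shows it can be of size $N/|1-X|$ when $|1-X|\gg N^{-1}$. You do not need it, though: expanding $\frac{d^2}{dX^2}\frac{X-X^{N+2}}{(1-X)^2}$ and using $|1-X^{N+1}|\le (N+1)|1-X|$ bounds every term directly by $N|1-X|^{-3}$ or $N^2|1-X|^{-2}$.
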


\begin{proof}
We first treat the case \(X\neq Y\). We have
\[
F_N^*(X,Y)
:=
[t^N]\frac{1}{(1-t)^2(1-Xt)(1-Yt)}
=
\frac{XF_N(X)-YF_N(Y)}{X-Y},
\]
where the last equality follows from the argument in the proof of Lemma~\ref{XY}. Hence
\begin{align*}
&\left|
[t^N]\frac{1}{(1-t)^2(1-Xt)(1-Yt)^2}
\right|
=
\left|
\frac{\partial F_{N+1}^*(X,Y)}{\partial Y}
\right|\\[6pt]
\le~&
\frac{|F_{N+1}(Y)|+|Y|\,|F_{N+1}'(Y)|}{|X-Y|}
+
\frac{|X|\,|F_{N+1}(X)|+|Y|\,|F_{N+1}(Y)|}{|X-Y|^2}\\[6pt]
\ll~&
N \left[
\frac{(|1-Y|+N^{-1})^{-1}+(|1-Y|+N^{-1})^{-2}}{|X-Y|}
+
\frac{(|1-X|+N^{-1})^{-1}+(|1-Y|+N^{-1})^{-1}}{|X-Y|^2}
\right]\\[6pt]
\ll~&
N \left[
\frac{(|1-X|+N^{-1})^{-1}+(|1-Y|+N^{-1})^{-1}}{|X-Y|^2}
+
\frac{(|1-Y|+N^{-1})^{-2}}{|X-Y|}
\right].
\end{align*}

Similarly, for \(X=Y\), we have
\[
\left|
[t^N]\frac{1}{(1-t)^2(1-Xt)^3}
\right|
=
\left|
\frac{F_{N+1}''(X)}{2}
\right|
\ll
\frac{N}{|1-X|^3}+\frac{N^{2}}{|1-X|^{2}}.
\]
\end{proof}

\subsection{Bounding the diagonal part}
In this subsection, we estimate the contribution of the diagonal terms \(h_1 = h_2\in \mathbb{Z}\setminus{\{0\}}\): 
\begin{align*}
\operatorname{Var}^{\mathrm{diag}}(R_N(f))
&=\frac{1}{N^4}
\sum_{h_1\in \mathbb Z\setminus\{0\}}
\widehat f\!\left(\frac{h_1}{N}\right)
\widehat f\!\left(\frac{h_1}{N}\right)
\sum_{I, J \in \mathcal I_{N}}
\operatorname{Cov}\bigl(e(h_1 Y_I), e(-h_1 Y_J)\bigr)\\
&=\frac{1}{N^{4}}\sum_{h\in\mathbb{Z}\setminus{\{0\}}}\widehat f^{2}\!\left(\frac{h}{N}\right)C_{N}(h,h).
\end{align*}
By Lemma \ref{c2}, we have
\begin{align*}
C_{N}(h,h)&=[t^{N}]\frac{1-|z_h|^{2}}{1-|z_{h}|^{2}t} \frac{t}{(1-t)^{3}}\left(\frac{1}{1-z_ht}+\frac{1}{1-z_{-h}t}-1\right)^{2}\\
&=[t^{N-1}]\frac{1-|z_h|^{2}}{1-|z_{h}|^{2}t}\frac{1}{(1-t)^{3}}\left(\frac{1}{1-z_ht}+\frac{1}{1-z_{-h}t}-1\right)^{2}.
\end{align*}
If \(|z_h|=1\), then \(C_N(h,h)=0\). Henceforth, we shall consider only those \(h\in\mathbb{Z}\setminus\{0\}\) with \(|z_h|\neq1\), for which \(C_N(h,h)\neq0\).

\begin{lemma}\label{c-d}
We have 
\[\left|C_N(h,h)\right|\ll \frac{N^{2}}{|1-z_h|^{2}}.\]
\end{lemma}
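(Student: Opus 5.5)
I would not expand the coefficient formula directly. Instead I would return to the convolution structure from the proof of Lemma~\ref{c2}: $C_N(h,h)=\sum_{S=1}^N D_S E_{N-S}$, specialised to $h_1=h_2=h$. Write $z=z_h$. Then $z_{-h}=\bar z$ and $z_{h_1-h_2}=z_0=1$, so
\[
D_S=\sum_{R=1}^S\bigl(1-|z|^{2R}\bigr),\qquad 0\le D_S\le S\le N .
\]
For the second factor, set $c_0=1$ and $c_k=z^k+\bar z^k$ for $k\ge1$. The generating function of $(E_T)$ is $\frac{1}{1-t}\bigl(\frac{1}{1-zt}+\frac{1}{1-\bar zt}-1\bigr)^2$, so $E_T=\sum_{n\le T}\sum_{j+k=n}c_jc_k$. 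I would expand the square by partial fractions into three kinds of pieces: the pure terms $\frac{1}{(1-zt)^2}$ and $\frac{1}{(1-\bar zt)^2}$, the cross term $\frac{2}{(1-zt)(1-\bar zt)}$, and lower-order terms. Summing their coefficients up to $T$ then gives closed forms.

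For the pure terms, the closed form
\[
\sum_{n\le T}(n+1)z^n=\frac{1-(T+2)z^{T+1}+(T+1)z^{T+2}}{(1-z)^2}
\]
yields the bound $|1-z|^{-2}+T|z|^{T}/|1-z|$. For the cross term, the partial sum is $\sum_{a+b\le T}z^a\bar z^b=\sum_a z^a\frac{1-\bar z^{T-a+1}}{1-\bar z}$. This splits into a piece of size $\ll|1-z|^{-2}$ and the piece $\bar z^{T+1}\sum_{a\le T}(z/\bar z)^a/(1-\bar z)$. The target intermediate estimate is
\[
E_T\ll \frac{1}{|1-z|^{2}}+\frac{T|z|^{T}}{|1-z|}.
\]
With this, the first part contributes at most $\sum_S D_S\,|1-z|^{-2}\le N^2|1-z|^{-2}$, which is the claimed bound.

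The main obstacle is the second part, $|1-z|^{-1}\sum_S D_S\,(N-S)|z|^{N-S}$. The trivial bound $D_S\le N$ only gives $N\min\{N^2,(1-|z|)^{-2}\}/|1-z|$. That is too weak precisely when $z$ lies near the unit circle but far from $1$, because then $1-|z|\ll|1-z|$. To handle this I would exploit two sources of saving simultaneously. First, $D_S$ is small when $|z|$ is near $1$, since $1-|z|^{2R}\le R(1-|z|^2)$ gives $D_S\le\min\{S,\,S^2(1-|z|^2)\}$. Second, the geometric sums oscillate: instead of bounding $\sum_{a\le T}(z/\bar z)^a$ by $T+1$, I would bound it by $\min\{T,\,|1-z/\bar z|^{-1}\}$, which should be $\asymp|1-z|^{-1}$ up to the real-axis case. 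In that case I would combine the conjugate terms and use the real structure of $c_k=2\,\mathrm{Re}\,z^k$.

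Should the case analysis become unwieldy, my fallback is to work from the displayed formula $C_N(h,h)=[t^{N-1}]\frac{1-|z|^2}{1-|z|^2t}(1-t)^{-3}(\cdots)^2$. There I would split the square into the three rational pieces and apply Lemma~\ref{XY} and Lemma~\ref{XYY} with $X,Y\in\{z,\bar z\}$, letting the factor $1-|z|^2$ absorb one power of the singularity coming from $\frac{1}{1-|z|^2t}$.
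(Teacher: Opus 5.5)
Your reduction is sound: with $z=z_h$ you have $z_{-h}=\bar z$, $C_N(h,h)=\sum_{S}D_SE_{N-S}$ with $0\le D_S\le S$, and $E_T\ll |1-z|^{-2}+T|z|^T/|1-z|$ does hold. The gap is the second part. Any argument that bounds $\sum_S D_S\,|E_{N-S}|$, i.e.\ puts absolute values on $E_{N-S}$, cannot give the lemma, because the large term in $E_T$ is genuinely present and not an artefact of your estimate. The pure pieces $(1-zt)^{-2}$ and $(1-\bar zt)^{-2}$ contribute $-2\,\mathrm{Re}\,\frac{(T+1)z^{T+1}}{1-z}+O(|1-z|^{-2})$ to $E_T$, regardless of $z/\bar z$.

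Concretely, take $z=-(1-\epsilon)$ with $\epsilon=N^{-2/3}$; the values $z_h$ come arbitrarily close to this. Then $E_T=-\frac{4(T+1)z^{T+1}}{1-z}+O(1)$, and $D_S\ge S-\epsilon^{-1}\gg N$ for $S\ge N/2$. Hence
\[
\sum_{S}D_S\,|E_{N-S}|\gg N\sum_{T\le N/2}T(1-\epsilon)^{T}-O(N^2)\gg N\epsilon^{-2}=N^{7/3},
\]
while the claimed bound is $N^2/|1-z|^2\asymp N^2$. Neither of your proposed savings touches this. The bound $D_S\le S^2(1-|z|^2)$ is useless once $S\epsilon\gg1$. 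Here $z/\bar z=1$, so there is no oscillation in the cross term, and that oscillation would not help with the pure pieces anyway. The cancellation you need is \emph{in the $S$-sum}: $z^{N-S}$ oscillates while $D_S$ varies slowly.

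The fix is a single Abel summation. Since $D_S-D_{S-1}=1-|z|^{2S}\in[0,1]$,
\[
C_N(h,h)=\sum_{R=1}^{N}\bigl(1-|z|^{2R}\bigr)\widetilde E_{N-R},\qquad
\widetilde E_U:=\sum_{T\le U}E_T=[t^U]\frac{G(t)^2}{(1-t)^2},\quad G(t)=\frac{1}{1-zt}+\frac{1}{1-\bar zt}-1,
\]
so $|C_N(h,h)|\le N\max_{U\le N}|\widetilde E_U|$. Expand $G^2$ into the six pieces indexed by $\mathscr H$. Each coefficient $[t^U](1-t)^{-2}(1-zt)^{-a}(1-\bar zt)^{-b}$ is $\ll (U+1)|1-z|^{-2}$:
\begin{itemize}
\item for $(2,0)$ and $(0,2)$, by Lemma~\ref{XY}(i);
\item for $a+b\le1$, by \eqref{X} or trivially;
\item for $(1,1)$, write it as $\sum_{j\le U}\bigl(\sum_{i\le j}z^i\bigr)\bigl(\sum_{i\le U-j}\bar z^{\,i}\bigr)$, where each geometric partial sum is at most $2/|1-z|$.
\end{itemize}

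This is the paper's proof with a different grouping of factors. The paper writes $C_N(h,h)=[t^{N-1}]\frac{1-|z|^2}{1-|z|^2t}\cdot\frac{G(t)^2}{(1-t)^3}$. It uses that the first factor has nonnegative coefficients of total mass at most $1$ to reduce to $\max_{k\le N}|[t^k](1-t)^{-3}G^2|$. It then spends one factor $(1-t)^{-1}$ to get $N\max_{k\le N}|[t^k](1-t)^{-2}G^2|$.

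Your fallback starts from this formula, but the mechanism you name is not what makes it work. You propose absorbing a power of the singularity via Lemmas~\ref{XY} and~\ref{XYY} with $X,Y\in\{z,\bar z\}$, but those lemmas contain no factor $(1-|z|^2t)^{-1}$. What is actually used is the positivity and unit total mass of the weights $(1-|z|^2)|z|^{2j}$.
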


\begin{proof}

Set
\[
\mathscr{H}
=
\left\{
(2,0), (0,2), (1,1), (1,0), (0,1), (0,0)
\right\}.
\]
To establish the desired bound, it suffices to bound uniformly over all \((a,b)\in\mathscr{H}\) the quantity \(C_N^*(a,b)\) defined below.  In the following displayed formula, we not only define \(C_N^*(a,b)\)   but also transforms it into a form suitable for the subsequent estimate.
\begin{align*}
C_N^{\ast}(a,b)
&:=\left|[t^N]\frac{1 - |z_h|^2}{1 - |z_h|^2 t}\frac{1}{(1-t)^3 (1 - z_h t)^a (1 - z_{-h} t)^b}\right|\\
&\asymp\sum_{k=0}^{N}
(1 - |z_h|^2) |z_h|^{2(N-k)}\left|
[t^k]
\frac{1}{(1-t)^3 (1 - z_h t)^a (1 - z_{-h} t)^b}
\right|\\
&\le
\max_{0 \le k \le N}
\left|
[t^k]
\frac{1}{(1-t)^3 (1 - z_h t)^a (1 - z_{-h} t)^b}
\right|
\\
&\le \max_{0\le k\le N} \sum_{j=0}^{k}\left|
[t^j] \frac{1}{(1-t)^2 (1 - z_h t)^a(1 - z_{-h} t)^b}
\right|\\
&\le 
\max_{0\le k\le N} \sum_{j=0}^{k}
\max_{0\le j\le N }\left|
[t^j] \frac{1}{(1-t)^2 (1 - z_h t)^a(1 - z_{-h} t)^b}
\right|\\
&\le N\max_{0\le k\le N}
\left|
[t^k] \frac{1}{(1-t)^2 (1 - z_h t)^a(1 - z_{-h} t)^b}
\right|.
\end{align*}

We now proceed case by case.

\medskip
\noindent\textbf{Case 1: \((a,b) = (2,0)\).}
By Lemma~\ref{XY}(i), we obtain
\[
\left|
[t^k] \frac{1}{(1-t)^2 (1 - z_h t)^2}
\right|
\ll
\frac{k}{|1 - z_h|^2},
\]
which yields
\[
C_N^{\ast}(2,0) \ll \frac{N^2}{|1 - z_h|^2}.
\]

\medskip
\noindent\textbf{Case 2: \((a,b) = (0,2)\).}
We argue as in Case 1 to obtain
\[
C_N^{\ast}(0,2) \ll \frac{N^2}{|1 - z_{-h}|^2}=\frac{N^2}{|1 - z_h|^2}.
\]

\medskip
\noindent\textbf{Case 3: \((a,b) = (1,1)\).} We estimate the auxiliary quantities
\[
F_k(\pm h):= [t^k]\frac{1}{(1-t)(1-z_{\pm h} t)}.
\]
Their absolute values satisfy
\[
|F_k(\pm h)|
=
\left|\sum_{j=0}^{k}z_{\pm h}^j\right|
\ll \frac{1}{|1-z_{\pm h}|}
=
\frac{1}{|1-z_h|}.
\]
By the convolution rule for coefficients of a product,  and applying the above bounds, we arrive at
\[\left|
[t^k]\frac{1}{(1-t)^2(1-z_h t)(1-z_{-h} t)}
\right|
\le \left| \sum_{j=0}^{k}  F_j(h) F_{k-j}(-h)\right|
\ll \frac{k}{|1-z_h|^2},
\]
which yields
\[
C_N^{\ast}(1,1)\ll \frac{N^{2}}{|1-z_h|^{2}}.
\]

\medskip
\noindent\textbf{Case 4: \((a,b) = (1,0)\).} By (\ref{X}), we have
\[
\left|
[t^k]\frac{1}{(1-t)^2(1-z_h t)}
\right|
\ll\frac{k}{|1-z_h|}< \frac{k}{|1-z_h|^{2}},\]
which gives 
\[C_N^{\ast}(1,0)\ll \frac{N^{2}}{|1-z_h|^{2}}.\]

\medskip
\noindent\textbf{Case 5: \((a,b) = (0,1)\).}
 We have
\[
C_N^{\ast}(0,1) \ll \frac{N^2}{|1 - z_{-h}|^2}=\frac{N^2}{|1 - z_h|^2}.
\]

\medskip
\noindent\textbf{Case 6: \((a,b) = (0,0)\).}
Taking \(X=0\) in   Lemma~\ref{XY}(i),  we have
\[
\left|
[t^k]\frac{1}{(1-t)^2}
\right|
\ll \frac{k}{|1-z_h|^{2}},\]
and thus 
\[C_N^{\ast}(0,0)\ll \frac{N^{2}}{|1-z_h|^{2}}.\]

\end{proof}

\begin{lemma}\label{diag}
Let \(s \in \mathbb{N}\). For \(N\) sufficiently large that \((1/N)^{1/8}<\eta_{\mathrm{loc}}\), where \(\eta_{\mathrm{loc}}>0\) is the constant from Theorem~\ref{thm:local}, we have
\[\frac{1}{N^2}
\sum_{h \in \mathbb Z \setminus \{0\}}
\left| \widehat f\!\left(\frac{h}{N}\right) \right|^s
\cdot
\frac{1}{|1 - z_h|^2}\ll N^{-\frac{1}{3}}\]
\end{lemma}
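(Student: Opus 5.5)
We split the sum over $h\neq 0$ according to the size of $|1-z_h|$ relative to a threshold $\eta:=N^{-\gamma}$, with $\gamma$ to be optimized (we expect $\gamma$ around $1/8$ or smaller, consistent with the hypothesis $(1/N)^{1/8}<\eta_{\mathrm{loc}}$). Throughout, the rapid decay $|\widehat f(h/N)|^s\ll_M (1+|h|/N)^{-M}$ lets us truncate to $|h|\le N^{1+\varepsilon'}$ at negligible cost, since $1/|1-z_h|^2$ grows at most polynomially in $|h|$ by the Diophantine lower bound combined with Remark~\ref{rem:local}.

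\emph{Regular $h$}, where $|1-z_h|\ge \eta$. Here we bound each term by $\eta^{-2}$ and sum the weights, using $\sum_h|\widehat f(h/N)|^s\ll N$. The resulting contribution is $\ll N^{-2}\cdot N\cdot \eta^{-2}=N^{-1+2\gamma}$, which is at most $N^{-1/3}$ provided $\gamma\le 1/3$.

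\emph{Near-exceptional $h$}, where $|1-z_h|<\eta<\eta_{\mathrm{loc}}$. Theorem~\ref{thm:local} and Remark~\ref{rem:local} give $|1-z_h|\asymp\rho_h(\boldsymbol\theta)\gg |h|^{-\lambda}$ with $\lambda=2/3+\varepsilon$. We decompose dyadically: let $\mathcal H_j=\{h:2^{-j-1}<\rho_h\le 2^{-j}\}$ for $2^{-j}\lesssim\eta$, and bound $\#\{h\in\mathcal H_j,\ |h|\le H\}$. Each $h\in\mathcal H_j$ has $h\boldsymbol\theta$ in an anisotropic box of size $2^{-j}\times 2^{-j/2}$ around $\mathcal K$. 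If $h,h'$ are both in such a box, then $h-h'$ lies in the doubled box around $\mathcal K$ (using that $\mathcal K$ is a group), so $\rho_{h-h'}\ll 2^{-j}$, which forces $|h-h'|\gg 2^{j/\lambda}$. Hence the elements of $\mathcal H_j$ are $2^{j/\lambda}$-separated, and $\#\{h\in\mathcal H_j:|h|\le H\}\ll 1+H2^{-j/\lambda}$. Moreover $\mathcal H_j$ is empty unless $H\gg 2^{j/\lambda}$, so the "$1$" is absorbed. Summing against the weight $(1+|h|/N)^{-M}$ over dyadic ranges $|h|\sim H\ge N$, the contribution is
\[
\ll \frac1{N^2}\sum_{j}2^{2j}\sum_{H\ \mathrm{dyadic}} \min\!\bigl(1,(N/H)^{M}\bigr)\,H\,2^{-j/\lambda}\mathbf 1_{H\gg 2^{j/\lambda}}.
\]
For $2^{j/\lambda}\le N$ this is $\ll N^{-1}2^{j(2-1/\lambda)}$. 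Since $2-1/\lambda=1/2-O(\varepsilon)>0$, the sum is dominated by the largest $j$ with $2^j\le N^{\lambda}$, giving $N^{-1+\lambda(2-1/\lambda)}=N^{-2+2\lambda}=N^{-2/3+2\varepsilon}$. For larger $j$, the factor $(N/H)^M$ with $H\gg 2^{j/\lambda}>N$ decays rapidly and makes those terms negligible. Altogether the near-exceptional part is $\ll N^{-2/3+2\varepsilon}\le N^{-1/3}$.

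The main obstacle is the separation (gap) argument in the near-exceptional case. It must correctly use the anisotropic metric: one needs the quasi-triangle inequality $\rho_{h-h'}\ll\rho_h+\rho_{h'}$ for the form $|x|+y^2$, which holds because $(a+b)^2\le 2a^2+2b^2$. It must also handle both points of $\mathcal K$ through the group property. Once this counting bound is in place, the dyadic summation is routine and leaves ample room below the target exponent $1/3$.
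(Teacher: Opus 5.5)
Your proposal is correct and follows essentially the paper's argument: split at a threshold $N^{-\gamma}$ (the paper takes $\gamma=1/8$) and bound the regular part trivially by $N^{-1+2\gamma}$. In the near-exceptional part you use $|1-z_h|\asymp\rho_h(\boldsymbol\theta)$, a dyadic decomposition in both $|h|$ and $\rho_h$, and the separation $|h-h'|\gg 2^{j/\lambda}$ obtained from $\rho_{h-h'}\ll\rho_h+\rho_{h'}$ together with the Diophantine bound. The only real difference is that you keep the weight $(1+|h|/N)^{-M}$ in the dyadic sum instead of truncating at $|h|\le N^{1+\varepsilon}$, which gives $N^{-2+2\lambda}$ rather than the paper's $N^{-2+2\lambda(1+\varepsilon)}$; note also that the range $2^{j/\lambda}>N$ is not negligible but contributes the same order $N^{-2+2\lambda}$ (as a geometric series once $M>2\lambda$), and that $2-1/\lambda=1/2+O(\varepsilon)$, with a plus sign, though only its positivity matters.
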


\begin{proof}
For $0<\varepsilon <{1}/{18},$  we fix $\lambda= {2}/{3}+\varepsilon$ and  
\(M \ge (2\lambda(1+\varepsilon) + 1/3)/\varepsilon.\) 

As in the expectation estimate in the previous section, we split the relevant sum into two parts, one over the near-exceptional set and the other over the regular set. Here the definitions of these two sets are slightly different from those used before; the near-exceptional set and the regular set are defined respectively by
\[
\Omega_N^{(1)}
:=
\left\{
h \in \mathbb Z \setminus \{0\} : |1 - z_h| < \frac{1}{N^{1/8}}
\right\}~~\text{and } ~\Omega_N^{(2)}
:=
\left\{
h \in \mathbb Z \setminus \{0\} : |1 - z_h| \ge \frac{1}{N^{1/8}}
\right\}.
\]
We then have
\[\frac{1}{N^2}
\sum_{h \in \mathbb Z \setminus \{0\}}
\left| \widehat f\!\left(\frac{h}{N}\right) \right|^2
\cdot
\frac{1}{|1 - z_h|^2}
=
\frac{1}{N^2}
\sum_{h \in \Omega_N^{(1)}}
+
\frac{1}{N^2}
\sum_{h \in \Omega_N^{(2)}}
=:
S_{1,N}
+S_{2,N}.
\]

\paragraph{Estimate of $S_{1,N}$.}  By Theorem \ref{thm:local} and Remark \ref{rem:local},  we obtain 
\[S_{1,N}\asymp\frac{1}{N^2}
\sum_{h \in \Omega_N^{(1)}}
\left| \widehat f\!\left(\frac{h}{N}\right) \right|^s
\cdot
\frac{1}{\rho_h(\boldsymbol{\theta})^2}.\]
Splitting the sum according to \(|h| \le N^{1+\varepsilon}\) and \(|h| > N^{1+\varepsilon}\), we obtain
\begin{align*}
S_{1,N}
&\ll_{\boldsymbol{\theta}}
\frac{1}{N^2}
\sum_{\substack{h \in \Omega_N^{(1)} \\ 0 < |h| \le N^{1+\varepsilon}}}
\left|\widehat f\!\left(\frac{h}{N}\right) \right|^s
\cdot
\frac{1}{\rho_h(\boldsymbol{\theta})^2}
+
\frac{1}{N^2}
\sum_{\substack{h \in \Omega_N^{(1)} \\ |h| > N^{1+\varepsilon}}}
\left(1 + \frac{|h|}{N}\right)^{-sM}
|h|^{2\lambda}\\
&\ll_{\boldsymbol{\theta},M}
\frac{1}{N^2}
\sum_{\substack{h \in \Omega_N^{(1)} \\ 0 < |h| \le N^{1+\varepsilon}}}
\left|\widehat f\!\left(\frac{h}{N}\right) \right|^s
\cdot
\frac{1}{\rho_h(\boldsymbol{\theta})^2}+N^{-sM\varepsilon+2\lambda(1+\varepsilon)}\\
&\le\frac{1}{N^2} \sum_{j=0}^{\left\lceil (1+\varepsilon)\log N \right\rceil} \sum_{2^{j}\le |h|<2^{j+1}}\frac{\mathbf{1}_{\{h\colon \rho_h(\boldsymbol{\theta})\ll \frac{1}{N} \}}}{\rho_h(\boldsymbol{\theta})^2}+N^{-sM\varepsilon+2\lambda(1+\varepsilon)}.\end{align*}
It remains to control the contribution from the first sum.
For $k\in\mathbb{N},$ we define 
\[\mathcal B_{j,k}:=\{h\in\mathbb{Z}\colon 2^{j}\le |h|<2^{j+1}, 2^{-(k+1)}\le\rho_h(\boldsymbol{\theta})<2^{-k} \}.\]
Since $\rho_h(\boldsymbol{\theta})\gg|h|^{-\lambda}$ and  $2^{-j\lambda}\ll2^{-k}\ll (\frac{1}{N})^{1/8}$,  
\begin{align*}
\sum_{2^{j}\le |h|<2^{j+1}}\frac{\mathbf{1}_{\{h\colon \rho_h(\boldsymbol{\theta})\ll \frac{1}{N} \}}}{\rho_h(\boldsymbol{\theta})^2}\asymp\sum_{k: 2^{-j\lambda}\ll2^{-k}\ll (\frac{1}{N})^{1/8}}~\sum_{h\in \mathcal B_{j,k}}2^{2k}
=\sum_{k: 2^{-j\lambda}\ll2^{-k}\ll (\frac{1}{N})^{1/8}} \# \mathcal B_{j,k}\cdot 2^{2k}.
    \end{align*}

A key observation is the following: for  \(h_1, h_2 \in \mathcal B_{j,k}\) with \(h_1 \neq h_2\), 
\[
|h_1 - h_2|^{-\lambda}
\ll \rho_{h_1 - h_2}(\boldsymbol{\theta})
\ll \rho_{h_1}(\boldsymbol{\theta}) + \rho_{h_2}(\boldsymbol{\theta})
< 2^{-(k-1)}.
\]
Indeed, the first inequality follows from the Diophantine condition, the second from the triangle inequality for \(\rho_h\), and the third from the defining property of \(\mathcal B_{j,k}\). 

Consequently,
\[
|h_1 - h_2| \gg 2^{k/\lambda},
\]
which implies that the elements of \(\mathcal B_{j,k}\) are separated by at least a distance of order \(2^{k/\lambda}\). Since \(\mathcal B_{j,k} \subset [-2^{j+1}, 2^{j+1}]\), we obtain
\[
\#  \mathcal B_{j,k}
\ll
\frac{2^{j}}{2^{k/\lambda}}+1.
\]
It follows that 
\begin{align*}
\sum_{2^{j}\le |h|<2^{j+1}}\frac{\mathbf{1}_{\{h\colon \rho_h(\boldsymbol{\theta})\ll \frac{1}{N} \}}}{\rho_h(\boldsymbol{\theta})^2}
\ll \sum_{k: 2^{-j\lambda}\ll2^{-k}\ll (\frac{1}{N})^{1/8}}  2^{2k}(\frac{2^{j}}{2^{k/\lambda}}+1)\asymp2^{2j\lambda} .
    \end{align*}

 Therefore we conclude that
\[S_{1,N}\ll_{\boldsymbol{\theta},M}\frac{1}{N^2} \sum_{j=0}^{\left\lceil (1+\varepsilon)\log N \right\rceil} 2^{2j\lambda}+ N^{-sM\varepsilon+2\lambda(1+\varepsilon)}\ll N^{-2+2\lambda(1+\varepsilon)}+N^{-sM\varepsilon+2\lambda(1+\varepsilon)}\ll N^{-\frac{1}{3}}.\]

\paragraph{Estimate of $S_{2,N}$.} We deduce that 
\begin{align*}
S_{2,N}
\le \frac{1}{N^{2}}\left(\sum_{1\le |h|<N^{1+\varepsilon}} N^{\frac{1}{4}}+\sum_{ |h|\ge N^{1+\varepsilon}} (1+\frac{|h|}{N})^{-2M}N^{\frac{1}{4}}\right)\ll N^{-\frac{1}{3}}.
\end{align*}

\end{proof}

To conclude, combining Lemma~\ref{c-d} and Lemma~\ref{diag}, we obtain
\begin{equation}\label{var:diag}
    \operatorname{Var}^{\mathrm{diag}}(R_N(f)) \ll N^{-1/3}.
\end{equation}

\subsection{Bounding the off-diagonal part}

In this subsection, we estimate the off-diagonal contribution
\[
\operatorname{Var}^{\mathrm{off}}(R_N(f))
=
\frac{1}{N^4}
\sum_{\substack{h_1, h_2 \in \mathbb Z \setminus \{0\} \\ h_1 \neq h_2}}
\widehat f\!\left(\frac{h_1}{N}\right)
\widehat f\!\left(\frac{h_2}{N}\right)
C_N(h_1, h_2).
\]
We decompose the sum into a good part and a bad part.  For \(0 < r < 1\), define the good set
\[
\mathcal G_N
:=
\left\{
(h_1, h_2) \in (\mathbb Z \setminus \{0\})^2 : h_1 \neq h_2,\
|1 - z_{h_1 - h_2}|,\ |1 - z_{h_1} z_{-h_2}|,\ |1 - z_{h_1}|,\ |1 - z_{-h_2}| > N^{-r}
\right\},
\]
and the bad set
\[
\mathcal B_N
:=
\left\{
(h_1, h_2) \in (\mathbb Z \setminus \{0\})^2 : h_1 \neq h_2
\right\}
\setminus
\mathcal G_N.
\]
Accordingly, we write
\[
\operatorname{Var}^{\mathrm{off}}(R_N(f))
=
\operatorname{Var}^{\mathrm{off}}_{\mathcal G}(R_N(f))
+
\operatorname{Var}^{\mathrm{off}}_{\mathcal B}(R_N(f)),
\]
where the two terms correspond to the summation over \(\mathcal G_N\) and \(\mathcal B_N\), respectively.

\medskip
In each part, we estimate the contribution by establishing a bound for \(C_N(h_1,h_2)\). As before, we set
\[
\mathscr{H}
=
\bigl\{
(2,0),\,(0,2),\,(1,1),\,(1,0),\,(0,1),\,(0,0)
\bigr\}.
\]
For \((a,b)\in\mathscr{H}\) and \(\varpi\in\{z_{h_1-h_2},z_{h_1}z_{-h_2}\}\), we define
\[
C_{N}(h_1,h_2;a,b;\varpi)
:=
\left|
[t^{N}]\frac{1}{(1-t)^{2}}
\frac{1}{(1-\varpi t)(1-z_{h_1}t)^{a}(1-z_{-h_2}t)^{b}}
\right|
\]
and
\[
C_{N}(h_1,h_2;a,b)
:=
\sum_{\varpi\in\{z_{h_1-h_2},z_{h_1}z_{-h_2}\}}
C_{N}(h_1,h_2;a,b;\varpi).
\]
It is readily checked that
\begin{align*}
&[t^{N}]\frac{1}{(1-t)^{2}}
\left(
\frac{1}{1-z_{h_1-h_2}t}
-
\frac{1}{1-z_{h_1}z_{-h_2}t}
\right)
\frac{1}{(1-z_{h_1}t)^{a}(1-z_{-h_2}t)^{b}}
\\[6pt]
\le{}& C_{N}(h_1,h_2;a,b),
\end{align*}
and thus, in view of Lemma~\ref{c2}, to establish the desired bound for \(C_N(h_1,h_2)\), it suffices to bound \(C_{N}(h_1,h_2;a,b)\) for every \((a,b)\in\mathscr{H}\).

\subsubsection{Good part}

Our approach consists of two steps: we estimate \(C_{N}(h_1,h_2;a,b)\) separately in the cases \(a+b\le1\) and \(a+b=2\), and combine these estimates to bound the off-diagonal variance over the good set.

\medskip

   We estimate \(C_{N}(h_1,h_2;a,b)\) by considering two cases:

\smallskip

\noindent \underline{\textbf{Case I:}\quad \(a+b\le 1.\)}
If \((a,b)=(0,0)\), then by \eqref{X}, we obtain
\[
C_{N}(h_1,h_2;0,0)
\ll \frac{N}{|1-z_{h_1-h_2}|}+\frac{N}{|1-z_{h_1}z_{-h_2}|}
\le 2N^{1+r}.
\]
If \((a,b)=(1,0)\) or $(0,1)$, then by Lemma \ref{XY}(iii), we have

  \[
  C_{N}(h_1,h_2;1,0)
\ll N^{1+5r}.
  \]

In summary, for \(a+b\le 1\), we have
\[
C_{N}(h_1,h_2;a,b)\ll N^{1+5r}.
\]

\medskip

\noindent \underline{\textbf{Case II:}\quad \(a+b=2.\)}
For \(
\varpi  \in \{z_{h_1-h_2},\, z_{h_1} z_{-h_2}\},
\)
we estimate \(C_{N}(h_1,h_2;a,b; \varpi)\) by dividing it further into two cases: 

\medskip
 
 \underline{\textbf{Case II.1.} } \quad\(|\varpi  - z_{h_1}| \le N^{-4r}\) or \(|\varpi  - z_{-h_2}| \le N^{-4r}\).

 \smallskip

We treat this case via a series of lemmas.

\begin{lemma}\label{X_i}
For  $X=z_{h_1}$ or $z_{-h_2}$, if
\(
|\varpi-X|\le N^{-4r}, 
\)
then 
\[
|\varpi| \le 1 - N^{-4r} ~\text{ and }~  |X| \le 1 - N^{-4r}.
\]
\end{lemma}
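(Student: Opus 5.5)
The plan is to argue by contradiction, using two ingredients: the good-set hypotheses \(|1-\varpi|,\ |1-z_{h_1}|,\ |1-z_{-h_2}|>N^{-r}\), and the factorisation \(z_h=e(h\sigma)\cos(2\pi h\delta)\), so that \(|z_h|=|\cos(2\pi h\delta)|\). Since \(|\varpi-X|\le N^{-4r}\), it is enough to show that \(|X|\le 1-2N^{-4r}\), or more generally that \(|X|\) stays bounded away from \(1\) by a large multiple of \(N^{-4r}\). Then \(|\varpi|\le |X|+N^{-4r}\) gives the other inequality. The implied constants will be absorbed by taking \(N\) large, or by replacing \(N^{-4r}\) with a constant multiple, which is harmless for later use. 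By the symmetry \(h_1\leftrightarrow -h_2\), which swaps \(z_{h_1}\) and \(z_{-h_2}\) and fixes both choices of \(\varpi\), I will assume \(X=z_{h_1}\). There are two cases, according to the two choices of \(\varpi\).

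\emph{Case \(\varpi=z_{h_1}z_{-h_2}\).} Here the argument is direct:
\[
|\varpi-X|=|z_{h_1}|\,|1-z_{-h_2}|>|X|\,N^{-r}.
\]
Combined with \(|\varpi-X|\le N^{-4r}\), this forces \(|X|<N^{-3r}\). Hence \(|\varpi|\le |X|+N^{-4r}<2N^{-3r}\). For large \(N\) both quantities are then far below \(1-N^{-4r}\).

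\emph{Case \(\varpi=z_{h_1-h_2}\).} This is the main step. Suppose for contradiction that \(|X|>1-CN^{-4r}\) for a suitable constant \(C\). Then also \(|\varpi|>1-(C+1)N^{-4r}\). The elementary expansion \(1-|\cos(2\pi y)|\asymp \|2y\|^2\) near the half-integers gives the following:
\[
h_1\delta\ \text{is within } O(N^{-2r}) \text{ of } \tfrac{a}{2},\qquad (h_1-h_2)\delta\ \text{is within } O(N^{-2r}) \text{ of } \tfrac{c}{2},
\]
for some integers \(a,c\). Subtracting, \(h_2\delta\) is within \(O(N^{-2r})\) of \(b/2\) with \(b=a-c\). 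Applying the same expansion again, each of \(\cos(2\pi h_1\delta)\), \(\cos(2\pi h_2\delta)\) and \(\cos(2\pi(h_1-h_2)\delta)\) equals its limiting sign \(\pm1\) up to \(O(N^{-4r})\). Here the quadratic behaviour of cosine is what upgrades \(N^{-2r}\) to \(N^{-4r}\). These signs are \(s_1=(-1)^a\), \(s_2=(-1)^b\) and \(s_1s_2\), because \((-1)^{a-b}=(-1)^{a+b}\). Multiplying by the characters \(e(h_1\sigma)\), \(e(-h_2\sigma)\) and \(e((h_1-h_2)\sigma)\), we obtain
\[
\varpi=z_{h_1}z_{-h_2}+O(N^{-4r}).
\]
We are therefore back in the first case up to an \(O(N^{-4r})\) error:
\[
|\varpi-X|\ge |X|\,|1-z_{-h_2}|-O(N^{-4r})\ge \tfrac12 N^{-r}-O(N^{-4r}),
\]
using \(|X|\ge\tfrac12\). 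For \(N\) large this exceeds \(N^{-4r}\), contradicting the hypothesis. Hence \(|X|\le 1-CN^{-4r}\), and consequently \(|\varpi|\le 1-(C-1)N^{-4r}\).

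The main obstacle is the bookkeeping of constants in the second case. The near-\(\mathcal K\) approximation must be quantified with an error of \(O(N^{-4r})\) rather than \(O(N^{-2r})\). This works only because the modulus deficit \(1-|\cos|\) is quadratic in the distance of the argument to \(\tfrac12\mathbb Z\), while the phase factors \(e(h\sigma)\) are carried exactly through the multiplication. I would first prove a small auxiliary estimate: if \(|\cos(2\pi y)|\ge 1-\eta\), then \(\cos(2\pi y)=\pm1+O(\eta)\) and \(y\) is within \(O(\eta^{1/2})\) of \(\tfrac12\mathbb Z\). The whole argument above then reduces to applying this estimate three times.
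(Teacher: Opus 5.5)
Your proposal is correct and is essentially the paper's proof. In the case $\varpi=z_{h_1-h_2}$ you run the same contradiction argument, using the quadratic behaviour of $\cos$ near $\tfrac12\mathbb Z$ to upgrade $O(N^{-2r})$ to $O(N^{-4r})$; you conclude via $\varpi=z_{h_1}z_{-h_2}+O(N^{-4r})$ where the paper isolates $e(-h_2\sigma)$, but both routes amount to $z_{-h_2}=1+O(N^{-4r})$, contradicting $|1-z_{-h_2}|>N^{-r}$. In the case $\varpi=z_{h_1}z_{-h_2}$ you use the same factorisation $\varpi-X=z_{h_1}(z_{-h_2}-1)$.

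Your direct treatment of that last case is slightly cleaner than the paper's, which appeals to $|z_{h_1}|\le1$ where the lower bound from the contradiction hypothesis is what is needed. Also, taking $C=2$ in your argument gives exactly the stated constant, so no rescaling of $N^{-4r}$ is required.
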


\begin{proof}
We treat the case \(X=z_{h_1}\); the other case is analogous.

Suppose for contradiction that one of \(\varpi\) and \(z_{h_1}\) has modulus \(>1-N^{-4r}\). Then the triangle inequality implies that both \(|\varpi|>1-2N^{-4r}\) and \(|z_{h_1}|>1-2N^{-4r}\).

\medskip
We distinguish the cases according to the value of \(\varpi\).

\medskip
\noindent\textbf{Case 1: \(\varpi=z_{h_1-h_2}\).}\quad Then
\[
|z_{h_1-h_2}-z_{h_1}|\le N^{-4r}
\quad\text{and}\quad
|z_{h_1}|,\ |z_{h_1-h_2}|\ge 1-2N^{-4r}.
\]
In this case, we may write
\[
\cos(2\pi h_1\delta)=\varepsilon_1+O(N^{-4r}),
\qquad
\cos(2\pi(h_1-h_2)\delta)=\varepsilon_2+O(N^{-4r})
\]
for some \(\varepsilon_1,\varepsilon_2=\pm1\).
Since cosine decays quadratically near its extrema,
\[
2\pi h_1\delta=k_1\pi+O(N^{-2r}),
\qquad
2\pi(h_1-h_2)\delta=k_2\pi+O(N^{-2r}),
\]
for some integers \(k_1,k_2\). Subtracting yields
\[
2\pi h_2\delta=(k_1-k_2)\pi+O(N^{-2r}),
\]
hence
\[
\cos(2\pi h_2\delta)=\varepsilon_1\varepsilon_2+O(N^{-4r}).
\]

On the other hand, from \(|z_{h_1-h_2}-z_{h_1}|\le N^{-4r}\), we obtain
\[
|e(-h_2\sigma)\cos(2\pi(h_1-h_2)\delta)-\cos(2\pi h_1\delta)|\le N^{-4r}.
\]
Substituting the cosine estimates,
\[
|e(-h_2\sigma)\varepsilon_2-\varepsilon_1|
\le N^{-4r}+O(N^{-4r})
\ll N^{-4r}.
\]
Thus
\[
e(-h_2\sigma)=\varepsilon_1\varepsilon_2+O(N^{-4r}).
\]
Combining this with the estimate for \(\cos(2\pi h_2\delta)\),
\[
z_{-h_2}
=
e(-h_2\sigma)\cos(2\pi h_2\delta)
=
\bigl(\varepsilon_1\varepsilon_2+O(N^{-4r})\bigr)
\bigl(\varepsilon_1\varepsilon_2+O(N^{-4r})\bigr)
=
1+O(N^{-4r}).
\]
When \(N\) is sufficiently large, this contradicts \(|1-z_{-h_2}|\ge N^{-r}\).

\medskip
\noindent\textbf{Case 2: \(\varpi  = z_{h_1} z_{-h_2}\).} Then
\[
|z_{h_1}z_{-h_2}-z_{h_1}|\le N^{-4r},
\]
and since \(|z_{h_1}|\le1\),
\[
|1-z_{-h_2}|\le N^{-4r},
\]
which contradicts \(|1-z_{-h_2}|>N^{-r}\) for \(N\) sufficiently large.
\end{proof}

\begin{lemma}\label{HX}
Suppose that, for $i=1,2,\ldots, s$,
\[
\quad |X_i|\le 1-N^{-4r}.
\]
Let $\{m_i: 1\le i\le  s\}\subset \mathbb{N}$ and
\(H(t)=\sum_{n= 0}^{\infty}h_n t^n\). We have 
\[
\left|
[t^N]H(t)\prod_{i=1}^{s}(1-X_it)^{-m_i}
\right|
\ll
 N^{4r(m_1+\cdots+m_s)}
\sup_{0\le n\le N}|h_n|.
\]
\end{lemma}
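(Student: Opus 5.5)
The plan is to expand each factor \((1-X_it)^{-m_i}\) as a power series and bound its coefficients in \(\ell^1\). The coefficient of \(t^N\) in a product of power series is a convolution, so one factor can be bounded in \(\ell^\infty\) and the remaining factors in \(\ell^1\). Concretely, write
\[
P(t):=\prod_{i=1}^{s}(1-X_it)^{-m_i}=\sum_{n\ge0}p_nt^n .
\]
Then
\[
[t^N]H(t)P(t)=\sum_{n=0}^{N}h_{N-n}p_n,
\]
which gives
\[
\Bigl|[t^N]H(t)P(t)\Bigr|\le \sup_{0\le n\le N}|h_n|\cdot\sum_{n\ge0}|p_n| .
\]
So it suffices to show \(\sum_{n}|p_n|\ll N^{4r(m_1+\cdots+m_s)}\).

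For the \(\ell^1\) bound, the coefficients of \((1-Xt)^{-m}\) are \(\binom{n+m-1}{m-1}X^n\). Hence their absolute values are the coefficients of \((1-|X|t)^{-m}\), and the \(\ell^1\) norm equals \((1-|X|)^{-m}\). The \(\ell^1\) norm is submultiplicative under Cauchy products: the coefficients of \(P\) are dominated termwise by those of \(\prod_i(1-|X_i|t)^{-m_i}\), and evaluating that series at \(t=1\) gives
\[
\sum_n|p_n|\le\prod_{i=1}^{s}(1-|X_i|)^{-m_i}.
\]
Evaluation at \(t=1\) is legitimate because every \(|X_i|<1\), so the series converges there. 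The hypothesis \(|X_i|\le 1-N^{-4r}\) then gives \((1-|X_i|)^{-m_i}\le N^{4rm_i}\), and multiplying over \(i\) yields exactly \(N^{4r(m_1+\cdots+m_s)}\). The implied constant is in fact \(1\).

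I expect no serious obstacle; the only points to check are the following.
\begin{enumerate}
\item Only coefficients \(h_n\) with \(n\le N\) enter the convolution, which is why the bound involves \(\sup_{0\le n\le N}|h_n|\) and no assumption on \(H\) beyond formal power series is needed.
\item The termwise domination \(|p_n|\le[t^n]\prod_i(1-|X_i|t)^{-m_i}\) follows from the triangle inequality applied inside the multinomial convolution.
\end{enumerate}
In the intended application, \(H(t)\) is \((1-t)^{-2}\) or similar, with \(\sup_{n\le N}|h_n|\ll N\), giving bounds like \(N^{1+12r}\) in Case II.1.
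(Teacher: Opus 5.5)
Your proposal is correct and follows essentially the same route as the paper: both expand each factor as a negative-binomial series, bound the $h$-coefficients by $\sup_{0\le n\le N}|h_n|$, and dominate what remains by $\prod_i(1-|X_i|)^{-m_i}\le N^{4r(m_1+\cdots+m_s)}$. The only cosmetic difference is that the paper enlarges the simplex $n_1+\cdots+n_s\le N$ to the box $0\le n_i\le N$ before factoring, whereas you pass directly to the full $\ell^1$ norm of $P$ by evaluating the dominating series at $t=1$.
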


\begin{proof}
Expand each factor \((1-X_it)^{-m_i}\) into its negative-binomial series:
\[
(1-X_it)^{-m_i}
=\sum_{n_i= 0}^{\infty}\binom{n_i+m_i-1}{m_i-1}X_i^{n_i}t^{n_i}.
\]
Thus, comparing coefficients of \(t^N\), we obtain
\[
[t^N]H(t)\prod_{i=1}^{s}(1-X_it)^{-m_i}
=
\sum_{\substack{n_1,\dots,n_s\ge 0\\ n_1+\cdots+n_s\le N}}
h_{N-n_1-\cdots-n_s}
\prod_{i=1}^{s}\binom{n_i+m_i-1}{m_i-1}X_i^{n_i}.
\]
Taking absolute values and using \(|h_{N-n_1-\cdots-n_s}|\le \sup_{0\le n\le N}|h_n|\), we get
\[
\left|
[t^N]H(t)\prod_{i=1}^{s}(1-X_it)^{-m_i}
\right|
\le
\sup_{0\le n\le N}|h_n|
\sum_{\substack{n_1,\dots,n_s\ge 0\\ n_1+\cdots+n_s\le N}}
\prod_{i=1}^{s}\binom{n_i+m_i-1}{m_i-1}|X_i|^{n_i}.
\]
Enlarging the constraint \(n_1+\cdots+n_s\le N\) to the product ranges \(0\le n_i\le N\) for each \(i\), we obtain
\[
\left|
[t^N]H(t)\prod_{i=1}^{s}(1-X_it)^{-m_i}
\right|
\le
\sup_{0\le n\le N}|h_n|
\prod_{i=1}^{s}
\sum_{n_i=0}^{N}\binom{n_i+m_i-1}{m_i-1}|X_i|^{n_i}.
\]
Since for each fixed \(i\),
\[
\sum_{n_i=0}^{N}\binom{n_i+m_i-1}{m_i-1}|X_i|^{n_i}\le (1-|X_i|)^{-m_i}
\le N^{4rm_i},
\]
we conclude that
\[
\left|
[t^N]H(t)\prod_{i=1}^{s}(1-X_it)^{-m_i}
\right|
\le
N^{4r(m_1+\cdots+m_s)}
\sup_{0\le n\le N}|h_n|,
\]
as required.
\end{proof}

\begin{lemma}
Suppose that 
\[|\varpi  - z_{h_1}| \le N^{-4r}\quad \text{or}\quad |\varpi  - z_{-h_2}| \le N^{-4r}.\]
    Then for  \((a,b) \in \mathscr{H}\) with $a+b=2$, we have
\[
C_{N}(h_1,h_2;a,b;\varpi)\ll N^{1+12r}.
\]
\end{lemma}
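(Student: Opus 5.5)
The plan is to combine Lemma~\ref{X_i} with Lemma~\ref{HX}, applied with a prefactor $H(t)=(1-t)^{-2}$ whose coefficients $h_n=n+1$ satisfy $\sup_{0\le n\le N}|h_n|\ll N$. The point is that in Case II.1 the three potentially singular factors $(1-\varpi t)^{-1}$, $(1-z_{h_1}t)^{-a}$ and $(1-z_{-h_2}t)^{-b}$ can be controlled by extracting radius-of-convergence information. Each factor whose parameter has modulus at most $1-N^{-4r}$ costs at most $N^{4r}$ per power. The remaining factors are only known to stay at distance $>N^{-r}$ from $1$ on the good set, and these must be absorbed differently.

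First, by symmetry assume $|\varpi-z_{h_1}|\le N^{-4r}$; the case with $z_{-h_2}$ is identical. Lemma~\ref{X_i} then gives $|\varpi|\le 1-N^{-4r}$ and $|z_{h_1}|\le 1-N^{-4r}$. I then go through the three cases of $(a,b)$ with $a+b=2$.
\begin{itemize}
\item[(i)] If $(a,b)=(2,0)$, all singular factors are $(1-\varpi t)^{-1}(1-z_{h_1}t)^{-2}$. Lemma~\ref{HX} with $s=2$, $m_1=1$, $m_2=2$ and $H=(1-t)^{-2}$ gives $\ll N^{12r}\cdot N = N^{1+12r}$.
\item[(ii)] If $(a,b)=(1,1)$, I put $H(t)=(1-t)^{-2}(1-z_{-h_2}t)^{-1}$. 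Its coefficients are $h_n=[t^n](1-t)^{-2}(1-z_{-h_2}t)^{-1}=F_n(z_{-h_2})$, which by \eqref{X} satisfy $|h_n|\ll n/|1-z_{-h_2}|\le N^{1+r}$ because $(h_1,h_2)\in\mathcal G_N$. Lemma~\ref{HX} with the two small factors $\varpi$ and $z_{h_1}$ (total multiplicity $2$) then gives $\ll N^{8r}N^{1+r}=N^{1+9r}$.
\item[(iii)] If $(a,b)=(0,2)$, I take $H(t)=(1-t)^{-2}(1-z_{-h_2}t)^{-2}$. By Lemma~\ref{XY}(i), $|h_n|\ll n/(|1-z_{-h_2}|+n^{-1})^2\le N^{1+2r}$. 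Only $\varpi$ is treated by Lemma~\ref{HX}, giving $\ll N^{4r}N^{1+2r}$.
\end{itemize}
All three bounds are $\ll N^{1+12r}$. When the hypothesis is instead $|\varpi-z_{-h_2}|\le N^{-4r}$, the roles of $z_{h_1}$ and $z_{-h_2}$ swap, with the same exponents.

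The main thing to check is that the $H$ chosen in each case really has $\sup_{n\le N}|h_n|$ bounded by the claimed power of $N$ uniformly in $n\le N$. The bounds \eqref{X} and Lemma~\ref{XY}(i) are stated for a generic index, and since they are monotone in $n$, taking $n=N$ suffices. I also need the good-set hypothesis $|1-z_{-h_2}|>N^{-r}$ (respectively $|1-z_{h_1}|>N^{-r}$) for whichever parameter is not made small by Lemma~\ref{X_i}. The worst exponent, $12r$, comes from case (i), where all three singular powers are paid through Lemma~\ref{HX}.
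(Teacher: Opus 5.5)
Your proof is correct. It follows the paper for $(a,b)=(2,0)$ but handles the mixed cases by a different route.

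The paper first splits into two cases:
\begin{itemize}
\item Both $|\varpi-z_{h_1}|\le N^{-4r}$ and $|\varpi-z_{-h_2}|\le N^{-4r}$ hold. Then Lemma~\ref{X_i} contracts all three parameters, and Lemma~\ref{HX} with $H=(1-t)^{-2}$ and total multiplicity $3$ gives $N^{1+12r}$.
\item Exactly one holds, say $|\varpi-z_{h_1}|\le N^{-4r}<|\varpi-z_{-h_2}|$. Only $(2,0)$ goes through Lemma~\ref{HX}. For $(1,1)$ it uses a partial fraction decomposition in $\varpi$ and $z_{-h_2}$, paying $|\varpi-z_{-h_2}|^{-1}<N^{4r}$, together with Lemma~\ref{XY}(iii). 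For $(0,2)$ it uses Lemma~\ref{XYY}(ii). Both give $N^{1+9r}$.
\end{itemize}

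You never split. From a single closeness condition you take only the contraction of $\varpi$ and $z_{h_1}$ supplied by Lemma~\ref{X_i}. You then absorb the uncontracted factor $(1-z_{-h_2}t)^{-b}$ into the prefactor $H$. This uses the fact that Lemma~\ref{HX} holds for an arbitrary power series $H$, whereas the paper only ever applies it with $H=(1-t)^{-2}$. Your coefficient bounds $\sup_{n\le N}|h_n|\ll N^{1+r}$ (from \eqref{X}) and $\ll N^{1+2r}$ (from Lemma~\ref{XY}(i)) are valid: the bounds are increasing in $n$, and in any case are at most $N/|1-z_{-h_2}|^{b}$.

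What your route buys:
\begin{itemize}
\item A single uniform mechanism.
\item It needs no separation $|\varpi-z_{-h_2}|>N^{-4r}$ and no derivative estimate from Lemma~\ref{XYY}(ii).
\item It uses only the good-set condition on the uncontracted parameter, $|1-z_{-h_2}|>N^{-r}$, which Lemma~\ref{X_i} needs anyway; it does not use $|1-\varpi|>N^{-r}$.
\item It gives slightly better exponents, $N^{1+9r}$ and $N^{1+6r}$, in the non-worst cases.
\end{itemize}
The worst case, where all three powers are paid through Lemma~\ref{HX}, gives the same $N^{1+12r}$ as the paper. The paper's version keeps the mixed cases inside the partial-fraction framework it also uses in Case II.2 and the bad part, at the cost of an extra case distinction.
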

\begin{proof}
The proof is divided into two cases:

\medskip
\noindent\textbf{Case 1:} both \(|\varpi  - z_{h_1}| \le N^{-4r}\) and \(|\varpi  - z_{-h_2}| \le N^{-4r}\) hold.

Applying Lemma~\ref{X_i}, we obtain
\[
|\varpi |, \; |z_{h_1}|, \; |z_{-h_2}| \le 1 - N^{-4r}.
\]
Write $H(t)=1/(1-t)^{2}=\sum_{n=0}^{\infty}(n+1)t^{n}.$  Applying Lemma~\ref{HX} with \(m_1 + \cdots + m_s = 3\) we have  for all \(a+b=2\) that 
\[
\left|
[t^N] H(t)
\frac{1}{(1-\varpi  t)(1-z_{h_1}t)^a (1-z_{-h_2}t)^b}
\right|
\ll N^{1+12r}.
\]

\medskip
\noindent\textbf{Case 2:} exactly one of \(|\varpi  - z_{h_1}| \le N^{-4r}\) and \(|\varpi  - z_{-h_2}| \le N^{-4r}\) holds.

It suffices to treat the case
\[
|\varpi  - z_{h_1}| \le N^{-4r} < |\varpi  - z_{-h_2}|;
\]
the other case is symmetric.

If \((a,b) = (1,1)\), the partial fraction decomposition
\[
\frac{1}{(1-\varpi  t)(1-z_{h_1}t)(1-z_{-h_2}t)}
=
\frac{1}{\varpi  - z_{-h_2}}
\left(
\frac{\varpi }{(1-\varpi  t)(1-z_{h_1}t)}
-
\frac{z_{-h_2}}{(1-z_{-h_2}t)(1-z_{h_1}t)}
\right)
\]
yields
\begin{align*}
&\left|
[t^N] \frac{1}{(1-t)^2}
\frac{1}{(1-\varpi  t)(1-z_{h_1}t)(1-z_{-h_2}t)}
\right| \\
\le~&
\frac{1}{|\varpi  - z_{-h_2}|}
\left(
\left|
[t^N] \frac{1}{(1-t)^2}
\frac{\varpi }{(1-\varpi  t)(1-z_{h_1}t)}
\right|
+
\left|
[t^N] \frac{1}{(1-t)^2}
\frac{z_{-h_2}}{(1-z_{-h_2}t)(1-z_{h_1}t)}
\right|
\right) \\
\ll~ & N^{4r} \cdot N^{1+5r} = N^{1+9r},
\end{align*}
where the last inequality follows from Lemma~\ref{XY}(iii) and \(|\varpi  - z_{-h_2}|^{-1} < N^{4r}\).

If \((a,b) = (0,2)\), applying Lemma~\ref{XYY}(ii)  gives
\[
\left|
[t^N] \frac{1}{(1-t)^2}
\frac{1}{(1-\varpi  t) (1-z_{-h_2}t)^2}
\right|
\ll N^{1+9r}.
\]

If \((a,b) = (2,0)\), applying Lemmas \ref{X_i} and \ref{HX} yields 
\[
\left|
[t^N] \frac{1}{(1-t)^{2}}
\frac{1}{(1-\varpi  t)(1-z_{h_1}t)^2}
\right|
\ll N^{1+12r}.
\]

Combining the two cases, we conclude
\[
C_{N}(h_1, h_2; a, b;\varpi) \ll N^{1+12r}.
\]
\end{proof}

\underline{\textbf{Case II.2.} } \quad  
\(|\varpi  - z_{h_1}| > N^{-4r}\) and \(|\varpi  - z_{-h_2}| > N^{-4r}\). 

\smallskip

Using \eqref{X} if $(a,b)=(0,0)$; Lemma \ref{XY}(iii) if $a+b=1$;
and Lemma \ref{XYY} if $a+b=2$, we obtain 
\[
C_{N}(h_1, h_2; a, b; \varpi) \ll N^{1+12r}.
\]

\medskip

Summarizing, for every $(h_1,h_2)\in\mathcal G_N$ and   $(a,b)\in \mathscr{H}$,
\[
C_{N}(h_1,h_2;a,b)\ll N^{1+12r},
\]
and thus
\[
C_N(h_1,h_2)\ll N^{1+12r}.
\]

\bigskip 

We are ready to bound \(\operatorname{Var}^{\mathrm{off}}_{\mathcal G}(R_N(f))=\frac{1}{N^{4}}\sum_{(h_1,h_2)\in \mathcal G_N} \widehat f\!\left(\frac{h_1}{N}\right)
\widehat f\!\left(\frac{h_2}{N}\right)C_N(h_1, h_2).\)
\begin{lemma}\label{var:og}
For sufficiently large $N$, we have
\[\operatorname{Var}^{\mathrm{off}}_{\mathcal G}(R_N(f))\ll N^{-1+12r}.\]
\end{lemma}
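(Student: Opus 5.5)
The plan is to combine the uniform bound just obtained with the rapid decay of $\widehat f$. For every $(h_1,h_2)\in\mathcal G_N$ we have shown $C_{N}(h_1,h_2;a,b)\ll N^{1+12r}$ for each $(a,b)\in\mathscr H$. By Lemma~\ref{c2}, after expanding the square
\[
\Bigl(\tfrac{1}{1-z_{h_1}t}+\tfrac{1}{1-z_{-h_2}t}-1\Bigr)^2
\]
into the six monomials indexed by $\mathscr H$ (with bounded integer coefficients), this gives $|C_N(h_1,h_2)|\ll N^{1+12r}$. The implied constant is absolute: it depends neither on $h_1,h_2$ nor on $N$, once $N$ is large enough for the auxiliary lemmas (Lemma~\ref{X_i} and the $N^{-4r}$ versus $N^{-r}$ comparisons) to apply.

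Taking absolute values inside the definition of $\operatorname{Var}^{\mathrm{off}}_{\mathcal G}(R_N(f))$ then gives
\[
\bigl|\operatorname{Var}^{\mathrm{off}}_{\mathcal G}(R_N(f))\bigr|
\ll \frac{N^{1+12r}}{N^4}\sum_{h_1,h_2\in\mathbb Z}\Bigl|\widehat f\Bigl(\frac{h_1}{N}\Bigr)\Bigr|\,\Bigl|\widehat f\Bigl(\frac{h_2}{N}\Bigr)\Bigr|
=\frac{N^{1+12r}}{N^4}\Bigl(\sum_{h\in\mathbb Z}\Bigl|\widehat f\Bigl(\frac{h}{N}\Bigr)\Bigr|\Bigr)^2 .
\]
Using $|\widehat f(\xi)|\le C_M(1+|\xi|)^{-M}$ with any $M\ge2$, we get $\sum_h|\widehat f(h/N)|\ll_f N$, exactly the elementary bound already used in Lemma~\ref{0}. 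Hence the right-hand side is $\ll N^{1+12r}\cdot N^2/N^4=N^{-1+12r}$, which is the claim.

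The only point needing care is the uniformity of the bound $C_N(h_1,h_2)\ll N^{1+12r}$ over the entire good set, including very large $|h_1|,|h_2|$, since we sum over all of $\mathbb Z^2$ without a truncation. This holds because every estimate in Lemmas~\ref{XY}, \ref{XYY}, \ref{X_i}, and~\ref{HX} depends only on the quantities $|1-z_{h_1}|$, $|1-z_{-h_2}|$, $|1-\varpi|$, $|\varpi-z_{h_1}|$, $|\varpi-z_{-h_2}|$. Membership in $\mathcal G_N$ controls these through thresholds that are powers of $N$ alone, so no Diophantine input is needed here; the arithmetic difficulty is deferred entirely to the bad set $\mathcal B_N$. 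I also note that the $\asymp$ relations used earlier to reduce $R_N$ to sums over $m<n$ only change constants, so they do not affect the exponent.
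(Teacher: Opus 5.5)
Your proposal is correct and matches the paper's proof. Both take absolute values, insert the uniform good-set bound $C_N(h_1,h_2)\ll N^{1+12r}$ (assembled from the six $(a,b)\in\mathscr H$ estimates via Lemma~\ref{c2}), enlarge the sum to all pairs, and use $\sum_h|\widehat f(h/N)|\ll N$ twice to obtain $N^{-4}\cdot N^{1+12r}\cdot N^{2}=N^{-1+12r}$; your remark that this bound is uniform in $h_1,h_2$, because membership in $\mathcal G_N$ is defined through $N$-power thresholds alone, correctly justifies the step the paper leaves implicit.
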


\begin{proof}
We deduce that
\begin{align*}
 \operatorname{Var}^{\mathrm{off}}_{\mathcal G}(R_N(f))&=\frac{1}{N^{4}}\sum_{(h_1,h_2)\in \mathcal G_N} \widehat f\!\left(\frac{h_1}{N}\right)
\widehat f\!\left(\frac{h_2}{N}\right)C_N(h_1, h_2)  \\
& \ll \frac{1}{N^{4}}\sum_{(h_1,h_2)\in (\mathbb{Z}\setminus\{0\})^{2}} \left|\widehat f\!\left(\frac{h_1}{N}\right)
\widehat f\!\left(\frac{h_2}{N}\right)\right|N^{1+12r}\\
&\le N^{-1+12r},
\end{align*}
where the last inequality follows from $\sum_h|\widehat f\!\left(\frac{h}{N}\right)|\ll N.$    
\end{proof}

\subsubsection{Bad part}

In this subsection, we aim to estimate
\[\operatorname{Var}^{\mathrm{off}}_{\mathcal B}(R_N(f))=\frac{1}{N^{4}}\sum_{(h_1,h_2)\in \mathcal B_N} \widehat f\!\left(\frac{h_1}{N}\right)
\widehat f\!\left(\frac{h_2}{N}\right)C_N(h_1, h_2).\]
Recall that the bad set \(\mathcal{B}_N\) consists of all \((h_1,h_2)\in(\mathbb{Z}\setminus\{0\})^2\) with \(h_1\neq h_2\) such that at least one of the following four quantities is \(\le N^{-r}\):
\[
|1-z_{h_1-h_2}|,\quad |1-z_{h_1}z_{-h_2}|,\quad |1-z_{h_1}|,\quad |1-z_{-h_2}|.
\]
We shall treat the contribution of \(\mathcal{B}_N\) in two parts, according to how many of these four quantities are \(\le N^{-r}\).
\medskip

\noindent\underline{\textbf{Part 1:}}\quad Exactly one of the above four quantities is \(\le N^{-r}\).

\medskip
In Part 1, we may assume without loss of generality that the distinguished quantity is \(\le N^{-r}/2\). Indeed, if it lies in \((N^{-r}/2,N^{-r}]\), then since the other three quantities are all \(>N^{-r}\),   the same argument as in the good part applies to show that
\[
C_N(h_1,h_2)\ll N^{1+12r},
\]
and thus the contribution of this part to the variance is \(\ll N^{-1+12r}\).

We then divide the analysis into four subparts.

\medskip
\underline{\textbf{Subpart 1-1:}}\quad  \(|1-z_{h_1}|\le N^{-r}/2.\) 

The triangle inequality gives
\[
|z_{h_1} - z_{h_1-h_2}|\ge |1-z_{h_1-h_2}|-|1-z_{h_1} |>N^{-r} - N^{-r}/2 = N^{-r}/2,
\]
and similarly
\[
|z_{h_1} - z_{h_1} z_{-h_2}| > N^{-r}/2,
\quad 
|z_{h_1}-z_{-h_2}| > N^{-r}/2.
\]

\begin{lemma}\label{P1-1}
For all \((a,b) \in \mathscr{H}\), we have
\[
C_{N}(h_1,h_2;a,b)\ll \frac{N^{1+5r}}{|1 - z_{h_1}|^{2}}.
\]

\end{lemma}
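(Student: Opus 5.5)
The plan is to bound, for each $\varpi\in\{z_{h_1-h_2},z_{h_1}z_{-h_2}\}$ and each $(a,b)\in\mathscr H$,
\[
C_N(h_1,h_2;a,b;\varpi)=\Bigl|[t^N]\frac{1}{(1-t)^2(1-\varpi t)(1-z_{h_1}t)^a(1-z_{-h_2}t)^b}\Bigr|.
\]
Two facts from Subpart 1-1 drive everything. First, the three quantities $|1-\varpi|$, $|1-z_{-h_2}|$ and the remaining one exceed $N^{-r}$. Second, $z_{h_1}$ is separated by more than $N^{-r}/2$ from each of $\varpi$ and $z_{-h_2}$. The singular factor $(1-z_{h_1}t)^{-a}$ will always be isolated by partial fractions in the pole $z_{h_1}$. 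This costs a factor $|z_{h_1}-\cdot|^{-1}\ll N^{r}$ for each separation used, and leaves coefficient extractions with only ``good'' poles. Those are controlled by \eqref{X}, Lemma~\ref{XY}(iii) or Lemma~\ref{XYY}.

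\textbf{Case $a=0$.} The cases $(0,0)$, $(0,1)$ and $(0,2)$ involve only the poles $\varpi$ and $z_{-h_2}$, both at distance $>N^{-r}$ from $1$. For $(0,0)$ I use \eqref{X}, and for $(0,1)$ Lemma~\ref{XY}(iii), which gives $\ll N^{1+5r}$. For $(0,2)$ I argue exactly as in Cases II.1 and II.2 of the good part; the hypotheses needed there (Lemma~\ref{X_i}) only concern $\varpi$ and $z_{-h_2}$. In all three cases the bound is at most a power of $N$ independent of $z_{h_1}$. Since $|1-z_{h_1}|\le 1$, it is harmlessly dominated by $N^{1+5r}|1-z_{h_1}|^{-2}$, provided the exponent works out. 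If the $(0,2)$ case only gives $N^{1+12r}$, I would either accept a weaker exponent or rely on the fact that $|1-z_{h_1}|^{-2}\ge N^{2r}$ in this subpart; this requires checking.

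\textbf{Case $a=1$.} For $(a,b)=(1,0)$ I decompose
\[
\frac{1}{(1-\varpi t)(1-z_{h_1}t)}=\frac{1}{\varpi-z_{h_1}}\Bigl(\frac{\varpi}{1-\varpi t}-\frac{z_{h_1}}{1-z_{h_1}t}\Bigr).
\]
The $\varpi$ term gives $N^{r}\cdot N^{1+r}$ by \eqref{X}. The $z_{h_1}$ term gives $N^{r}\cdot N/(|1-z_{h_1}|+N^{-1})\le N^{1+r}|1-z_{h_1}|^{-1}$. For $(1,1)$ I first split off $z_{h_1}$ against $\varpi$ in the same way. The resulting pieces are then either a two-good-pole term, bounded by Lemma~\ref{XY}(iii) as $\ll N^{1+5r}$, or a term $(1-z_{h_1}t)^{-1}(1-z_{-h_2}t)^{-1}$. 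For the latter I apply Lemma~\ref{XY}(ii) in its second alternative, using $|z_{h_1}-z_{-h_2}|>N^{-r}/2$. Altogether this gives $\ll N^{1+2r}|1-z_{h_1}|^{-1}+N^{1+5r}$.

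\textbf{Case $a=2$ (the main obstacle).} For $(2,0)$ I use Lemma~\ref{XYY}(ii) with $Y=z_{h_1}$, $X=\varpi$. This gives
\[
N\Bigl[\frac{N^{r}+|1-z_{h_1}|^{-1}}{|\varpi-z_{h_1}|^2}+\frac{|1-z_{h_1}|^{-2}}{|\varpi-z_{h_1}|}\Bigr]\ll N^{1+3r}|1-z_{h_1}|^{-1}+N^{1+r}|1-z_{h_1}|^{-2},
\]
which is within the claim. I replace $|1-z_{h_1}|+N^{-1}$ by $|1-z_{h_1}|$, which only weakens the bound. The delicate point is tracking the powers of $N^{r}$ against $|1-z_{h_1}|^{-1}$ so that every term is $\le N^{1+5r}|1-z_{h_1}|^{-2}$; the relevant inequalities are $N^{ar}\le N^{5r}$ and $|1-z_{h_1}|^{-1}\le|1-z_{h_1}|^{-2}$.

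Finally I sum over the two choices of $\varpi$ to get the bound for $C_N(h_1,h_2;a,b)$. In each case the decomposition is chosen so that no denominator involves a difference of two possibly-close good poles. Where such a difference does appear (between $\varpi$ and $z_{-h_2}$), I fall back on the II.1/II.2 dichotomy already established.
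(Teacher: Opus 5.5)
\textbf{The genuine gap is the case $(a,b)=(0,2)$.} This is the hard case of the lemma, not $(2,0)$.

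\textbf{What is fine.} Your treatment of $(0,0)$, $(0,1)$, $(1,0)$ and $(2,0)$ is correct and essentially the paper's: partial fractions in the pole $z_{h_1}$ at cost $|\varpi-z_{h_1}|^{-1}\ll N^{r}$, then \eqref{X}, Lemma~\ref{XY} and Lemma~\ref{XYY}(ii). In $(1,1)$ you dropped the prefactor $N^{r}$ on the two-good-pole piece, so the total is $N^{1+6r}+N^{1+2r}|1-z_{h_1}|^{-1}$. This is still admissible, since $N^{r}\ll|1-z_{h_1}|^{-1}$ here.

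\textbf{Why the $(0,2)$ argument fails.} You assert that Cases II.1/II.2 of the good part carry over because Lemma~\ref{X_i} only involves $\varpi$ and $z_{-h_2}$. That is false.
\begin{itemize}
\item For $X=z_{-h_2}$, the contradiction in Lemma~\ref{X_i} comes from $|1-z_{h_1}|>N^{-r}$. For instance, when $\varpi=z_{h_1}z_{-h_2}$ one has $|\varpi-z_{-h_2}|=|z_{-h_2}|\,|1-z_{h_1}|$. This is exactly the hypothesis that fails in Subpart~1-1.
\item The conclusion of Lemma~\ref{X_i} also fails. If $|1-z_{h_1}|\le N^{-4r}$, then $\varpi=z_{h_1}z_{-h_2}$ lies within $N^{-4r}$ of $z_{-h_2}$. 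Meanwhile $|z_{-h_2}|=|\cos(2\pi h_2\delta)|$ can be as close to $1$ as you like, with $|1-z_{-h_2}|\asymp 1$.
\item So II.1 is unavailable, since it needs $|\varpi|,|z_{-h_2}|\le 1-N^{-4r}$ to invoke Lemma~\ref{HX}. II.2 is unavailable too, since it needs $|\varpi-z_{-h_2}|>N^{-4r}$.
\end{itemize}

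\textbf{No estimate uniform in $z_{h_1}$ can work.} Suppose $|1-z_{h_1}|\le cN^{-1}$ and $1-|z_{-h_2}|\le cN^{-1}$, which is compatible with all the Subpart~1-1 conditions.
\begin{itemize}
\item Then $(1-\varpi t)^{-1}(1-z_{-h_2}t)^{-2}$ behaves like a triple pole on the unit circle away from $t=1$.
\item The coefficient is then of order $N^{2}|1-z_{-h_2}|^{-2}$; this is the $N^2/|1-X|^2$ term of Lemma~\ref{XYY}(i).
\item Hence any bound for this term that does not depend on $z_{h_1}$ is $\gg N^2$. Such a bound is not $\ll N^{1+5r}|1-z_{h_1}|^{-2}$ for pairs with $|1-z_{h_1}|\asymp N^{-r}$.
\end{itemize}
Your fallback does not rescue this. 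Even the hoped-for $N^{1+12r}$ would only yield $N^{1+10r}|1-z_{h_1}|^{-2}$.

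\textbf{The missing idea.} You must measure the coalescence of $\varpi$ and $z_{-h_2}$ in terms of $|1-z_{h_1}|$, which is what the factor $|1-z_{h_1}|^{-2}$ in the target pays for. The paper does this as follows.
\begin{itemize}
\item If $|z_{-h_2}|<1/2$: convolve the $(0,1)$-type bound from Lemma~\ref{XY}(iii) with $\sum_n|z_{-h_2}|^n\ll 1$.
\item If $|z_{-h_2}|\ge 1/2$ and $\varpi=z_{h_1}z_{-h_2}$: then $|\varpi-z_{-h_2}|\asymp|1-z_{h_1}|$. Lemma~\ref{XYY}(ii) with $X=\varpi$, $Y=z_{-h_2}$ then costs exactly $|1-z_{h_1}|^{-2}$.
\item If $|z_{-h_2}|\ge 1/2$ and $\varpi=z_{h_1-h_2}$: use the identity $|z_{h_1-h_2}-z_{h_1}z_{-h_2}|=|\sin(2\pi h_1\delta)\sin(2\pi h_2\delta)|=\sqrt{(1-|z_{h_1}|^2)(1-|z_{-h_2}|^2)}$. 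Two subcases follow.
\begin{itemize}
\item If this quantity is $\le|1-z_{h_1}|/4$, then $|\varpi-z_{-h_2}|\ge|1-z_{h_1}|/4$ and Lemma~\ref{XYY}(ii) applies again.
\item Otherwise $1-|z_{-h_2}|>|1-z_{h_1}|/64$. Bound the coefficient by $\sup_{n\le N}$ of the $(0,1)$-type coefficient, which is $\ll N^{1+5r}$, times $\sum_{n\le N}|z_{-h_2}|^n\ll|1-z_{h_1}|^{-1}$.
\end{itemize}
\end{itemize}
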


\begin{proof} We consider the various cases according to \((a,b)\).

If \((a,b) = (0,0)\), then by \eqref{X},
\[
C_{N}(h_1, h_2; 0, 0)
\ll \frac{N}{|1 - z_{h_1-h_2}|} + \frac{N}{|1 - z_{h_1} z_{-h_2}|}
\le 2N^{1+r}.
\]

If \((a,b) = (0,1)\), then by Lemma~\ref{XY}(iii),
\[
C_{N}(h_1, h_2; 0, 1) \ll N^{1+5r}.
\]

If \((a,b) = (1,0)\), then by Lemma~\ref{XY}(iii),
\[
C_{N}(h_1, h_2; 1, 0)
\ll \frac{N^{1+2r}}{|1 - z_{h_1}|}.
\]

If \((a,b) = (1,1)\), for $\varpi \in \{z_{h_1-h_2},z_{h_1}z_{-h_2}\},$ the partial fraction decomposition
\[
\frac{1}{(1-\varpi  t)(1-z_{h_1}t)(1-z_{-h_2}t)}
=
\frac{1}{\varpi  - z_{h_1}}
\left(
\frac{\varpi }{(1-\varpi  t)(1-z_{-h_2}t)}
-
\frac{z_{h_1}}{(1-z_{-h_2}t)(1-z_{h_1}t)}
\right)
\]
yields
\begin{align*}
&\left|
[t^N] \frac{1}{(1-t)^2}
\frac{1}{(1-\varpi  t)(1-z_{h_1}t)(1-z_{-h_2}t)}
\right| \\[6pt]
\le~&
\frac{1}{|\varpi  - z_{h_1}|}
\left(
\left|
[t^N] \frac{1}{(1-t)^2}
\frac{\varpi }{(1-\varpi  t)(1-z_{-h_2}t)}
\right|
+
\left|
[t^N] \frac{1}{(1-t)^2}
\frac{z_{h_1}}{(1-z_{-h_2}t)(1-z_{h_1}t)}
\right|
\right) \\[6pt]
 \ll~& N^{r} \cdot (N^{1+5r}+\frac{N^{1+2r}}{|1-z_{h_1}|}) \ll \frac{N^{1+6r}}{|1-z_{h_1}|},
\end{align*}
where the penultimate inequality follows from Lemma~\ref{XY}(ii)-(iii) and \(|\varpi  - z_{h_1}|^{-1} < 2N^{r}\).

If \((a,b) = (2,0)\), then by Lemma \ref{XYY},  we have 
\[C_{N}(h_1, h_2; 2, 0) \ll \frac{N^{1+3r}}{|1-z_{h_1}|^{2}}\]

If \((a,b) = (0,2)\), we distinguish two cases according to the size of \(|z_{-h_2}|\).

\medskip
\noindent\textbf{Case 1: \(|z_{-h_2}| < 1/2\).}
By Lemma~\ref{XY}(iii), for   \(\varpi  \in \{z_{h_1-h_2},\, z_{h_1} z_{-h_2}\}\),
\[
\left|
[t^n] \frac{1}{(1-t)^2}
\frac{1}{(1-\varpi  t)(1-z_{-h_2}t)}
\right|
\ll n^{1+5r}.
\]
Since \(|z_{-h_2}| < 1/2\), we have \(\sum_{n=0}^{N} |z_{-h_2}|^n \ll 1\), and hence by convolution,
\[
\left|
[t^N] \frac{1}{(1-t)^2}
\frac{1}{(1-\varpi  t)(1-z_{-h_2}t)^2}
\right|
\ll N^{1+5r}.
\]

\medskip
\noindent\textbf{Case 2: \(|z_{-h_2}| \ge 1/2\).}

For \(\varpi  = z_{h_1} z_{-h_2}\), Theorem~\ref{thm:local} gives
\[
|\varpi  - z_{-h_2}|
= |z_{-h_2}| \cdot |1 - z_{h_1}|
\asymp |1 - z_{h_1}|.
\]
Thus Lemma~\ref{XYY}(ii) with \(X = \varpi \), \(Y = z_{-h_2}\) yields
\[
\left|
[t^N] \frac{1}{(1-t)^2}
\frac{1}{(1-\varpi  t)(1-z_{-h_2}t)^2}
\right|
\ll
\frac{N^{1+2r}}{|1 - z_{h_1}|^{2}}.
\]

For \(\varpi  = z_{h_1-h_2}\), we distinguish two subcases.

\medskip
 \textit{Subcase 2a: \(|\varpi  - z_{h_1} z_{-h_2}| \le |1 - z_{h_1}|/4\).}
The triangle inequality gives
\[
|\varpi  - z_{-h_2}|
\ge |z_{h_1} z_{-h_2} - z_{-h_2}| - |\varpi  - z_{h_1} z_{-h_2}|
\ge \frac{|1 - z_{h_1}|}{4}.
\]
Thus Lemma~\ref{XYY}(ii) with \(X = \varpi \), \(Y = z_{-h_2}\) yields
\[
\left|
[t^N] \frac{1}{(1-t)^2}
\frac{1}{(1-\varpi  t)(1-z_{-h_2}t)^2}
\right|
\ll
\frac{N^{1+2r}}{|1 - z_{h_1}|^{2}}.
\]

\medskip
 \textit{Subcase 2b: \(|\varpi  - z_{h_1} z_{-h_2}| > |1 - z_{h_1}|/4\).}
We use the identity
\[
|\varpi  - z_{h_1} z_{-h_2}|
= |\sin(2\pi h_1\delta)\sin(2\pi h_2\delta)|
= \sqrt{(1 - |z_{h_1}|^2)(1 - |z_{-h_2}|^2)}.
\]
The hypothesis \(|\varpi  - z_{h_1} z_{-h_2}| > |1 - z_{h_1}|/4\) then gives
\[
1 - |z_{-h_2}| > \frac{|1 - z_{h_1}|}{64}.
\]
Consequently,
\begin{align*}
&\left|
[t^N] \frac{1}{(1-t)^2(1-\varpi  t)(1-z_{-h_2}t)^2}
\right|
\\
 \le~& 
\sup_{0 \le n \le N}
\left|
[t^n] \frac{1}{(1-t)^2(1-\varpi  t)(1-z_{-h_2}t)}
\right|
\cdot
\sum_{n=0}^{N} |z_{-h_2}|^n
\ll \frac{N^{1+5r}}{|1 - z_{h_1}|}.
\end{align*}

In summary, for all \((a,b) \in \mathscr{H}\),
\[
C_{N}(h_1, h_2; a, b) \ll \frac{N^{1+5r}}{|1 - z_{h_1}|^{2}}.
\]
\end{proof}

We now estimate the contribution to  the off-diagonal   variance from those \((h_1, h_2) \in \mathcal B_N\) satisfying \(|1 - z_{h_1}| \le N^{-r}/2\). By Lemma~\ref{P1-1},
\[
C_N(h_1, h_2) \ll \frac{N^{1+5r}}{|1 - z_{h_1}|^{2}}.
\]
Therefore,
\begin{align*}
\operatorname{Var}^{\mathrm{off},1-1}_{\mathcal B}(R_N(f))&:=\frac{1}{N^4}
\sum_{(h_1, h_2) \in \mathcal B_N}
\widehat f\!\left(\frac{h_1}{N}\right)
\widehat f\!\left(\frac{h_2}{N}\right)
\mathbf{1}_{\{|1 - z_{h_1}| \le N^{-r}/2\}}(h_1)
C_N(h_1, h_2)
\\
&\ll
\frac{N^{1+5r}}{N^4}
\sum_{h_1 \in \mathbb{Z} \setminus \{0\}}
\left|\widehat f\!\left(\frac{h_1}{N}\right)\right|
\frac{\mathbf{1}_{\{|1 - z_{h_1}| \le N^{-r}/2\}}(h_1)}{|1 - z_{h_1}|^{2}}
\sum_{h_2 \in \mathbb{Z} \setminus \{0\}}
\left|\widehat f\!\left(\frac{h_2}{N}\right)\right|
\\
& \ll
N^{-2+5r}
\sum_{h_1 \in \mathbb{Z} \setminus \{0\}}
\left|\widehat f\!\left(\frac{h_1}{N}\right)\right|
\frac{1}{|1 - z_{h_1}|^{2}}
\\
& \ll N^{-1/3 + 5r},
\end{align*}
where the penultimate inequality uses
\(
\sum_{h \in \mathbb{Z}} \left|\widehat f\!\left(\frac{h}{N}\right)\right| \ll N,
\)
and the last inequality follows from Lemma~\ref{diag} with $s=1$.

\medskip
\underline{\textbf{Subpart 1-2:}}\quad  \(|1 - z_{-h_2}| \le N^{-r}/2\).

This case is analogous to Subpart 1-1, with the roles of \(h_1\) and \(-h_2\) interchanged. The same argument yields
\[
\operatorname{Var}^{\mathrm{off},1-2}_{\mathcal B}(R_N(f))
\ll N^{-1/3 + 5r}.
\]

\medskip
\underline{\textbf{Subpart 1-3:}}\quad  \(|1 - z_{h_1-h_2}| \le N^{-r}/2\).

The triangle inequality gives
\[
|z_{h_1} - z_{h_1-h_2}|
\ge |1 - z_{h_1-h_2}| - |1 - z_{h_1}|
> N^{-r} - N^{-r}/2
= N^{-r}/2,
\]
and similarly
\[
|z_{h_1-h_2} - z_{-h_2}| > N^{-r}/2.
\]

\begin{lemma}\label{P1-3}
For  \((a,b) \in \mathscr{H}\),
\[
C_{N}(h_1, h_2; a, b)
\ll \frac{N^{1+12r}}{|1 - z_{h_1-h_2}|+N^{-1}}.
\]
\end{lemma}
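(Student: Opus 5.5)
We argue exactly as in Lemma~\ref{P1-1}, now with the small quantity being $|1-z_{h_1-h_2}|$. We are in Subpart 1-3, so $|1-z_{h_1-h_2}|\le N^{-r}/2$, while $|1-z_{h_1}|$, $|1-z_{-h_2}|$ and $|1-z_{h_1}z_{-h_2}|$ are all $>N^{-r}$.

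The plan is to split $C_N(h_1,h_2;a,b)$ into its two pieces, $\varpi=z_{h_1}z_{-h_2}$ and $\varpi=z_{h_1-h_2}$.

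\textbf{The piece $\varpi=z_{h_1}z_{-h_2}$.} Every pole satisfies $|1-\cdot|>N^{-r}$, so this piece is handled as in the good part. If $|\varpi-z_{h_1}|$ or $|\varpi-z_{-h_2}|$ is $\le N^{-4r}$, we use Case II.1. This needs Lemma~\ref{X_i}, whose hypothesis only involves $|1-z_{-h_2}|$ and $|1-z_{h_1}|$, so it still applies. Otherwise we use Case II.2 via \eqref{X}, Lemma~\ref{XY}(iii) and Lemma~\ref{XYY}. Either way this piece is $\ll N^{1+12r}$. Since $|1-z_{h_1-h_2}|+N^{-1}\le 1$, this is already dominated by the claimed bound.

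\textbf{The piece $\varpi=z_{h_1-h_2}$, general strategy.} Here $\varpi$ is the only pole near $1$. By the triangle inequalities recorded before the lemma, $|\varpi-z_{h_1}|>N^{-r}/2$ and $|\varpi-z_{-h_2}|>N^{-r}/2$. We use partial fractions in $\varpi$ to isolate the factor $1/(1-\varpi t)$. After that, every remaining coefficient involves only the well-separated poles $z_{h_1}$ and $z_{-h_2}$, together with at most one copy of $\varpi$.

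\textbf{Cases $a+b\le 1$.}
\begin{itemize}
\item For $(a,b)=(0,0)$, \eqref{X} gives the bound $N/(|1-\varpi|+N^{-1})$.
\item For $(1,0)$ and $(0,1)$, we apply Lemma~\ref{XY}(ii) with $X=\varpi$ and $Y=z_{h_1}$ (resp. $z_{-h_2}$), using the second alternative inside the minimum. This gives
\[
\frac{N^{1+r}}{|1-\varpi|+N^{-1}}+N^{1+2r}.
\]
\end{itemize}

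\textbf{Case $a+b=2$, subcase $(a,b)=(1,1)$.} We apply the partial fraction identity
\[
\frac{1}{(1-\varpi t)(1-z_{h_1}t)}=\frac{1}{\varpi-z_{h_1}}\left(\frac{\varpi}{1-\varpi t}-\frac{z_{h_1}}{1-z_{h_1}t}\right),
\]
which costs a factor $N^{r}$. This reduces the term to one of the following.
\begin{itemize}
\item The piece $[t^N](1-t)^{-2}(1-\varpi t)^{-1}(1-z_{-h_2}t)^{-1}$. Lemma~\ref{XY}(ii) bounds it by $N^{1+r}/(|1-\varpi|+N^{-1})+N^{1+r}$.
\item The piece with poles $z_{h_1}$ and $z_{-h_2}$. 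Lemma~\ref{XY}(iii) bounds it by $N^{1+5r}$.
\end{itemize}

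\textbf{Case $a+b=2$, subcases $(2,0)$ and $(0,2)$.} We apply Lemma~\ref{XYY}(ii) with $X=\varpi$ and $Y=z_{h_1}$ (resp. $z_{-h_2}$). Its first term is
\[
N\,\frac{(|1-\varpi|+N^{-1})^{-1}+N^{r}}{|X-Y|^2}\ll \frac{N^{1+2r}}{|1-\varpi|+N^{-1}}+N^{1+3r}.
\]
Its second term is $N\,(|1-Y|+N^{-1})^{-2}|X-Y|^{-1}\ll N^{1+3r}$.

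Collecting all cases, every contribution is at most $N^{1+12r}(|1-\varpi|+N^{-1})^{-1}$.

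\textbf{Main obstacle.} The delicate point is that Lemma~\ref{XY}(ii) and Lemma~\ref{XYY}(ii) give the factor $(|1-X|+N^{-1})^{-1}$ only to the first power, and we must keep it that way. We therefore always choose $X=\varpi$ and take $Y$ to be the well-behaved pole, never vice versa. The separations $|\varpi-z_{h_1}|$ and $|\varpi-z_{-h_2}|\gg N^{-r}$ are what allow this, so that the denominators $|X-Y|$ cost only powers $N^{r}$ or $N^{2r}$.

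There is one possible gap: if $z_{h_1}$ and $z_{-h_2}$ happen to be nearly equal, the $(1,1)$ reduction lands on a two-pole coefficient with close poles. Lemma~\ref{XY}(iii) handles this, since both poles are $N^{-r}$-far from $1$, so no further case split should be needed.
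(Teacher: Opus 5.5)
Your proposal is correct and follows the paper's proof essentially step for step. You treat the $\varpi=z_{h_1}z_{-h_2}$ piece by the good-part Case~II argument, which is dominated by the claim since $|1-z_{h_1-h_2}|+N^{-1}<1$, and the $\varpi=z_{h_1-h_2}$ piece via \eqref{X}, Lemma~\ref{XY}(ii), the same partial fraction in $(\varpi,z_{h_1})$ for $(1,1)$, and Lemma~\ref{XYY}(ii) with $X=\varpi$, using $|\varpi-z_{h_1}|,|\varpi-z_{-h_2}|>N^{-r}/2$. The only slip is harmless: in the $(1,1)$ reduction the remainder from Lemma~\ref{XY}(ii) is $N^{1+2r}$, not $N^{1+r}$, and it is absorbed into the final bound.
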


\begin{proof}
We treat the cases according to \(a+b\).

\medskip
\noindent\textbf{Case 0: \(a+b=0\).}\quad
By \eqref{X},
\[
C_{N}(h_1, h_2; 0, 0)
\ll \frac{N}{|1 - z_{h_1-h_2}|+N^{-1}}.
\]

\medskip
\noindent\textbf{Case 1: \(a+b=1\).}\quad
By Lemma~\ref{XY}(ii),
\[
\left|
[t^N] \frac{1}{(1-t)^2(1 - z_{h_1-h_2}t)(1 - z_{h_1}t)^a (1 - z_{-h_2}t)^b}
\right|
\ll \frac{N^{1+2r}}{|1 - z_{h_1-h_2}|+N^{-1}},
\]
and by Lemma~\ref{XY}(iii),
\[
\left|
[t^N] \frac{1}{(1-t)^2(1 - z_{h_1} z_{-h_2}t)(1 - z_{h_1}t)^a (1 - z_{-h_2}t)^b}
\right|
\ll N^{1+5r}.
\]
Combining the two bounds and using \(|1 - z_{h_1-h_2}|+N^{-1} \le N^{-r}/2 +N^{-1}< 1\) gives
\[
C_{N}(h_1, h_2; a, b)
\ll \frac{N^{1+5r}}{|1 - z_{h_1-h_2}|+N^{-1}}.
\]

\medskip
\noindent\textbf{Case 2: \(a+b=2\).}\quad
For \((a,b) = (1,1)\), the partial fraction decomposition
\begin{align*}
&\frac{1}{(1 - z_{h_1-h_2} t)(1 - z_{h_1} t)(1 - z_{-h_2} t)}
\\
=~&
\frac{1}{z_{h_1-h_2} - z_{h_1}}
\left(
\frac{z_{h_1-h_2}}{(1 - z_{h_1-h_2} t)(1 - z_{-h_2} t)}
-
\frac{z_{h_1}}{(1 - z_{-h_2} t)(1 - z_{h_1} t)}
\right).
\end{align*}
yields
\begin{align*}
&\left|
[t^N] \frac{1}{(1-t)^2}
\frac{1}{(1 - z_{h_1-h_2}t)(1 - z_{h_1}t)(1 - z_{-h_2}t)}
\right| \\
\le~&
\frac{1}{|z_{h_1-h_2} - z_{h_1}|}
\left(
\left|
[t^N] \frac{1}{(1-t)^2}
\frac{z_{h_1-h_2}}{(1 - z_{h_1-h_2}t)(1 - z_{-h_2}t)}
\right|
+
\left|
[t^N] \frac{1}{(1-t)^2}
\frac{z_{h_1}}{(1 - z_{-h_2}t)(1 - z_{h_1}t)}
\right|
\right) \\
\ll~& N^{r} \left( \frac{N^{1+2r}}{|1 - z_{h_1-h_2}|+N^{-1}} + N^{1+5r} \right)
\ll \frac{N^{1+6r}}{|1 - z_{h_1-h_2}|+N^{-1}},
\end{align*}
where the second inequality uses Lemma~\ref{XY}(ii).

For \((a,b) = (2,0)\) or \((0,2)\), Lemma~\ref{XYY}(ii) gives
\[
\left|
[t^N] \frac{1}{(1-t)^2}
\frac{1}{(1 - z_{h_1-h_2}t)(1 - z_{h_1}t)^a (1 - z_{-h_2}t)^b}
\right|
\ll \frac{N^{1+3r}}{|1 - z_{h_1-h_2}|+N^{-1}}.
\]
On the other hand, applying the \textbf{Case II} estimate for the \textbf{Good part} with \(\varpi  = z_{h_1-h_2}\) yields
\[
\left|
[t^N] \frac{1}{(1-t)^2}
\frac{1}{(1 - z_{h_1}z_{-h_2}t)(1 - z_{h_1}t)^a (1 - z_{-h_2}t)^b}
\right|
\ll N^{1+12r}.
\]
Hence, for \(a+b=2\),
\[
C_{N}(h_1, h_2; a, b)
\ll \frac{N^{1+12r}}{|1 - z_{h_1-h_2}|+N^{-1}}.
\]

\medskip
Summarizing the three cases, we conclude that for  \((a,b) \in \mathscr{H}\),
\[
C_{N}(h_1, h_2; a, b)
\ll \frac{N^{1+12r}}{|1 - z_{h_1-h_2}|+N^{-1}}.
\]
\end{proof}

We now estimate the contribution to the off-diagonal variance from those \((h_1, h_2) \in \mathcal B_N\) satisfying \(|1 - z_{h_1-h_2}| \le N^{-r}/2\). By Lemma~\ref{P1-3},
\[
C_N(h_1, h_2) \ll\frac{N^{1+12r}}{|1 - z_{h_1-h_2}|}\le  \frac{N^{1+12r}}{|1 - z_{h_1-h_2}|^{2}}.
\]
Therefore,
\begin{align*}
\operatorname{Var}^{\mathrm{off},1-3}_{\mathcal B}(R_N(f))
&:=
\frac{1}{N^4}
\sum_{(h_1, h_2) \in \mathcal B_N}
\widehat f\!\left(\frac{h_1}{N}\right)
\widehat f\!\left(\frac{h_2}{N}\right)
\mathbf{1}_{\{|1 - z_{h_1-h_2}| \le N^{-r}/2\}}(h_1, h_2)
C_N(h_1, h_2)
\\
&\ll
N^{-3+5r}
\sum_{(h_1, h_2) \in \mathcal B_N}
\left|\widehat f\!\left(\frac{h_1}{N}\right)\right|
\left|\widehat f\!\left(\frac{h_2}{N}\right)\right|
\frac{\mathbf{1}_{\{|1 - z_{h_1-h_2}| \le N^{-r}/2\}}(h_1, h_2)}{|1 - z_{h_1-h_2}|^{2}}
\\
&=
N^{-3+5r}
\sum_{k \in \mathbb{Z} \setminus \{0\}}
\frac{\mathbf{1}_{\{|1 - z_k| \le N^{-r}/2\}}}{|1 - z_k|^{2}}
\sum_{\substack{(h_1, h_2) \in (\mathbb{Z} \setminus \{0\})^{2}\\ h_1 - h_2 = k}}
\left|\widehat f\!\left(\frac{h_1}{N}\right)\right|
\left|\widehat f\!\left(\frac{h_2}{N}\right)\right|.
\end{align*}

It remains to estimate the inner sum. Since \(\widehat f\) decays rapidly, we have
\[
\sum_{\substack{(h_1, h_2) \in \mathcal B_N \\ h_1 - h_2 = k}}
\left|\widehat f\!\left(\frac{h_1}{N}\right)\right|
\left|\widehat f\!\left(\frac{h_2}{N}\right)\right|
\ll
N \left(1+\frac{|k|}{N}\right)^{-M}.
\]
Indeed, writing \(h_1 = h_2 + k\), the inner sum becomes
\[
\sum_{h_2} \left|\widehat f\!\left(\frac{h_2 + k}{N}\right)\right|
\left|\widehat f\!\left(\frac{h_2}{N}\right)\right|.
\]
Using the rapid decay of \(\widehat f\), this is bounded by
\[
\sum_{h_2} \left(1+\frac{|h_2+k|}{N}\right)^{-M-2}
\left(1+\frac{|h_2|}{N}\right)^{-M-2}.
\]
To extract the factor \((1+|k|/N)^{-M}\), we use the elementary inequality
\[
1 + \frac{|k|}{N} \ll \left(1+\frac{|h_2+k|}{N}\right)\left(1+\frac{|h_2|}{N}\right),
\]
which follows from \(|k| \le |h_2+k| + |h_2|\). Hence
\[
\left(1+\frac{|h_2+k|}{N}\right)^{-M}
\left(1+\frac{|h_2|}{N}\right)^{-M}\ll \left(1+\frac{|k|}{N}\right)^{-M},
\]
and therefore
\[
\sum_{h_2} \bigl(1+\tfrac{|h_2+k|}{N}\bigr)^{-M-2}
\bigl(1+\tfrac{|h_2|}{N}\bigr)^{-M-2}
\ll
\bigl(1+\tfrac{|k|}{N}\bigr)^{-M}
\sum_{h_2} \bigl(1+\tfrac{|h_2|}{N}\bigr)^{-2}
\ll
N \bigl(1+\tfrac{|k|}{N}\bigr)^{-M},
\]
where the last step uses \(\sum_{h_2} (1+|h_2|/N)^{-2} \ll N\).

Substituting back,
\[
\operatorname{Var}^{\mathrm{off},1-3}_{\mathcal B}(R_N(f))
\ll
N^{-2+5r}
\sum_{k \in \mathbb{Z} \setminus \{0\}}
\left(1+\frac{|k|}{N}\right)^{-M}
\frac{\mathbf{1}_{\{|1 - z_k| \le N^{-r}/2\}}}{|1 - z_k|^{2}}.
\]
Now Lemma~\ref{diag} applies, provided \(M\) is chosen sufficiently large, say
\[
M \ge \frac{2\lambda(1+\varepsilon) + 1/3}{\varepsilon},
\]
where \(0 < \varepsilon < 1/18\), as in the proof of Lemma~\ref{diag}. Therefore
\[
\frac{1}{N^2}
\sum_{k \in \mathbb{Z} \setminus \{0\}}
\left(1+\frac{|k|}{N}\right)^{-M}
\frac{1}{|1 - z_k|^{2}}
\ll N^{-1/3}.
\]
Hence
\begin{equation}\label{var:1-3}
\operatorname{Var}^{\mathrm{off},1-3}_{\mathcal B}(R_N(f))
\ll N^{-1/3 + 5r}.    
\end{equation}

\medskip
\underline{\textbf{Subpart 1-4:}}\quad  \(|1 - z_{h_1} z_{-h_2}| \le N^{-r}/2\).

By the same argument as in Subpart 1-3, with \(z_{h_1-h_2}\) replaced by \(z_{h_1} z_{-h_2}\), we obtain
\[
C_N(h_1, h_2) \ll \frac{N^{1+12r}}{|1 - z_{h_1} z_{-h_2}|+N^{-1}}.
\]
It remains to estimate
\[
\operatorname{Var}^{\mathrm{off},1-4}_{\mathcal B}(R_N(f))
:=
\frac{1}{N^4}
\sum_{(h_1, h_2) \in \mathcal B_N}
\widehat f\!\left(\frac{h_1}{N}\right)
\widehat f\!\left(\frac{h_2}{N}\right)
\mathbf{1}_{\{|1 - z_{h_1} z_{-h_2}| \le N^{-r}/2\}}(h_1, h_2)
C_N(h_1, h_2).
\]
Substituting the bound on \(C_N(h_1, h_2)\) gives
\begin{align*}
\operatorname{Var}^{\mathrm{off},1-4}_{\mathcal B}(R_N(f))
&\ll
N^{-3+12r}
\sum_{(h_1, h_2) \in \mathcal B_N}
\left|\widehat f\!\left(\frac{h_1}{N}\right)\right|
\left|\widehat f\!\left(\frac{h_2}{N}\right)\right|
\frac{\mathbf{1}_{\{|1 - z_{h_1} z_{-h_2}| \le N^{-r}/2\}}(h_1, h_2)}{|1 - z_{h_1} z_{-h_2}|+N^{-1}}.
\end{align*}

To proceed, we need to understand how the condition \(|1 - z_{h_1} z_{-h_2}| \le N^{-r}/2\) constrains the possible values of \(h_2\) for fixed \(h_1\). This is the content of the following lemma.
\begin{lemma}
\label{df}
Let \(A_0 \in \overline{\mathbb D}\) and \(0 < \eta \le \eta_{\mathrm{loc}}\), where \(\eta_{\mathrm{loc}} > 0\) is the sufficiently small absolute constant from Theorem~\ref{thm:local}. Assume that \(q_1 \neq q_2 \in \mathbb Z \setminus \{0\}\) satisfy
\[
|1 - A_0 z_{-q_i}| \le \eta, \quad i = 1, 2.
\]
Then
\[
\rho_{q_1 - q_2}(\boldsymbol{\theta}) \ll \eta.
\]
The implicit constant is uniform in \(A_0\).
\end{lemma}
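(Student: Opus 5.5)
The plan is to turn each hypothesis $|1-A_0z_{-q_i}|\le\eta$ into two separate pieces of information: a modulus statement controlling $2q_i\delta$, and an argument statement controlling $q_i\sigma$ up to the unknown phase of $A_0$. Subtracting the two argument statements for $i=1,2$ then removes $A_0$, and the result is exactly the quantity measured by $\rho_{q_1-q_2}(\boldsymbol{\theta})$. This is the same mechanism used in Case~1 of the proof of Lemma~\ref{X_i}, carried out with $A_0$ in place of a second $z$-value.

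\emph{Step 1 (moduli).} Since $|A_0|\le1$ and $|z_{-q_i}|=|\cos(2\pi q_i\delta)|\le1$, the hypothesis gives
\[
1-\eta\le|A_0z_{-q_i}|\le\min\{|A_0|,|z_{-q_i}|\}.
\]
Hence $\cos^2(2\pi q_i\delta)\ge(1-\eta)^2$, so $\sin^2(2\pi q_i\delta)\le2\eta$. Let $n_i$ be the integer nearest to $2q_i\delta$. Because $|\sin(\pi x)|\asymp\|x\|$, this yields
\[
|2q_i\delta-n_i|^2\ll\eta .
\]
Since $\eta\le\eta_{\mathrm{loc}}$ is small, $\cos(2\pi q_i\delta)$ is bounded away from $0$, and its sign is $\varepsilon_i:=(-1)^{n_i}$. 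Thus we may write $z_{-q_i}=\varepsilon_i\,|z_{-q_i}|\,e(-q_i\sigma)$.

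\emph{Step 2 (arguments).} Because $A_0z_{-q_i}$ lies within $\eta$ of $1$ and has modulus at least $1-\eta\ge\tfrac12$, its argument is $O(\eta)$ modulo $2\pi$. Writing $A_0=|A_0|e(a)$ and using $\varepsilon_i=e(n_i/2)$, we get
\[
a-q_i\sigma+\frac{n_i}{2}\equiv O(\eta)\pmod 1,\qquad i=1,2.
\]
Subtracting the two congruences cancels $a$, which is why the implied constant is uniform in $A_0$. This gives
\[
\Bigl\|(q_1-q_2)\sigma-\frac{n_1-n_2}{2}\Bigr\|\ll\eta .
\]
On the other coordinate, with $n:=n_1-n_2$,
\[
|2(q_1-q_2)\delta-n|^2\le 2\bigl(|2q_1\delta-n_1|^2+|2q_2\delta-n_2|^2\bigr)\ll\eta .
\]

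\emph{Step 3 (conclusion).} Choose $m$ to be the integer nearest to $(q_1-q_2)\sigma-n/2$. The pair $(m,n)$ is then admissible in the definition of $\rho_{q_1-q_2}(\boldsymbol{\theta})$, and both terms are $\ll\eta$, so $\rho_{q_1-q_2}(\boldsymbol{\theta})\ll\eta$.

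The main obstacle I expect is the sign bookkeeping in Step~1. The argument needs the sign of $\cos(2\pi q_i\delta)$ to equal $(-1)^{n_i}$ with the same $n_i$ used for the quadratic bound, and it needs the argument of $A_0z_{-q_i}$ to be taken near $0$ rather than near $\pi$. Both are secured by requiring $\eta\le\eta_{\mathrm{loc}}$ to be small, for instance $\eta<1/4$. This is also the point where the anisotropic form of $\rho$ matters: $\sigma$ is controlled linearly in $\eta$, while $\delta$ is controlled only at scale $\sqrt{\eta}$, in line with Theorem~\ref{thm:local}.
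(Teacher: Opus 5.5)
Your proof is correct and follows essentially the same route as the paper. The bound $|\cos(2\pi q_i\delta)|\ge 1-\eta$ places $2q_i\delta$ within $O(\sqrt{\eta})$ of an integer $n_i$, the phase of $A_0z_{-q_i}$ places $q_i\sigma$ within $O(\eta)$ of $a+n_i/2$, and subtracting cancels $A_0$. The only difference is bookkeeping: the paper writes $q_i\boldsymbol{\theta}$ as a translate of a point of $\mathcal K$ plus $(u_i,v_i)$ and notes that the difference of the two centres lies in $\mathcal K$, whereas you tie the sign of the cosine to the parity of $n_i$ and write down the admissible pair $(m,n)$ in $\rho_{q_1-q_2}$ directly.
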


\begin{proof}
 By assumption,
\[
|1 - A_0 z_{-q_i}| \le \eta,
\]
so \(|A_0| \ge 1 - \eta\). Since \(\eta \le \eta_{\mathrm{loc}} < 1\), we have \(A_0 \neq 0\), and we may write
\[
A_0 = r e(\phi), \quad 1 - \eta \le r \le 1, \quad \phi \in \mathbb T.
\]

Because \(r \le 1\), the inequality \(|1 - A_0 z_{-q_i}| \le \eta\) implies
\(
|\cos(2\pi q_i\delta)| \ge 1 - \eta.
\)
Since \(|\cos(2\pi x)|\) attains its maximum \(1\) only at \(x \in \{0, 1/2\} \pmod 1\), with quadratic decay near these points, there exist for each \(i\) some \(\varepsilon_i \in \{0, 1\}\) and \(v_i \in \mathbb R\) such that
\[
q_i\delta = \frac{\varepsilon_i}{2} + v_i \pmod 1, \quad |v_i| \le C_1\eta^{1/2},
\]
for an absolute constant \(C_1 > 0\). Hence
\[
\cos(2\pi q_i\delta) = e\!\left(\frac{\varepsilon_i}{2}\right)\cos(2\pi v_i),
\]
and therefore
\[
A_0 z_{-q_i} = r\cos(2\pi v_i) \, e\!\left(\phi - q_i\sigma + \frac{\varepsilon_i}{2}\right).
\]
Since \(r\cos(2\pi v_i) = 1 + O(\eta)\), the inequality \(|1 - A_0 z_{-q_i}| \le \eta\) yields
\[
\left|1 - \widetilde r_i \, e(t_i)\right| \le C_2\eta,
\]
where
\[
\widetilde r_i := r\cos(2\pi v_i), \quad t_i := \phi - q_i\sigma + \frac{\varepsilon_i}{2},
\]
and \(1 - C_2\eta \le \widetilde r_i \le 1\).

We now derive the bound on \(\|t_i\|_{\mathbb T}\) directly. The identity
\[
|1 - \widetilde r_i e(t_i)|^2
=
(1 - \widetilde r_i)^2 + 2\widetilde r_i \bigl(1 - \cos(2\pi t_i)\bigr)
\]
together with \(\widetilde r_i \ge 1/2\) (which holds after decreasing \(\eta_{\mathrm{loc}}\) if necessary) and \(|1 - \widetilde r_i e(t_i)| \le C_2\eta\) gives
\[
1 - \cos(2\pi t_i) \ll_{C_2} \eta^2.
\]
Since \(1 - \cos(2\pi t_i) \asymp \|t_i\|\) uniformly on \(\mathbb T\), we conclude
\[
\|t_i\|\le C_3\eta
\]
for some absolute constant \(C_3 > 0\). Hence there exists \(u_i \in \mathbb R\) such that
\[
q_i\sigma = \phi + \frac{\varepsilon_i}{2} + u_i \pmod 1, \quad |u_i| \le C_3\eta.
\]

Combining the two coordinates,
\[
q_i\boldsymbol{\theta} = \kappa_{\varepsilon_i}(\phi) + (u_i, v_i), \quad i = 1, 2,
\]
where the center \(\kappa_{\varepsilon_i}(\phi)\) is defined by
\[
\kappa_{\varepsilon_i}(\phi)
:=
\left(\phi + \frac{\varepsilon_i}{2}, \frac{\varepsilon_i}{2}\right) \pmod 1.
\]
Thus \(\kappa_0(\phi) = (\phi, 0)\) and \(\kappa_1(\phi) = (\phi + 1/2, 1/2)\), which are precisely the translates of the two exceptional points in
\[
\mathcal K = \left\{(0,0), \left(\tfrac{1}{2}, \tfrac{1}{2}\right)\right\}.
\]

We now compare the two centers. If \(\varepsilon_1 = \varepsilon_2\), then \(\kappa_{\varepsilon_1}(\phi) - \kappa_{\varepsilon_2}(\phi) = (0,0)\). If \(\varepsilon_1 \neq \varepsilon_2\), then
\[
\kappa_{\varepsilon_1}(\phi) - \kappa_{\varepsilon_2}(\phi) = \left(\frac{1}{2}, \frac{1}{2}\right) \pmod 1.
\]
Thus in all cases, \(\kappa_{\varepsilon_1}(\phi) - \kappa_{\varepsilon_2}(\phi) \in \mathcal K\). Since \(\mathcal K\) is a subgroup of \(\mathbb T^2\) and every element of \(\mathcal K\) has zero difference frequency, the two centers contribute nothing to \(\rho_{q_1 - q_2}\). Consequently,
\[
\rho_{q_1 - q_2}(\boldsymbol{\theta})
\ll
\operatorname{dist}_{\mathcal K}\bigl((u_1 - u_2, v_1 - v_2)\bigr)
\ll
|u_1 - u_2| + (v_1 - v_2)^2.
\]
Using \(|u_i| \le C_3\eta\) and \(|v_i| \le C_1\eta^{1/2}\), we obtain
\[
|u_1 - u_2| \le 2C_3\eta \ll \eta,
\quad 
(v_1 - v_2)^2 \le (2C_1\eta^{1/2})^2 = 4C_1^2\eta \ll \eta.
\]
Therefore,
\[
\rho_{q_1 - q_2}(\boldsymbol{\theta}) \ll \eta.
\]
This completes the proof.
\end{proof}

The following lemma records a preliminary observation that plays a crucial role in the estimate of \(\operatorname{Var}^{\mathrm{off},1-4}_{\mathcal B}(R_N(f))\).

\begin{lemma}\label{>0}

For every \((h_1, h_2) \in \mathcal B_N\),
\[
|1 - z_{h_1} z_{-h_2}| > 0.
\]
    
\end{lemma}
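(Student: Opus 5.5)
We argue by contradiction: suppose \((h_1,h_2)\in\mathcal B_N\), so \(h_1,h_2\neq0\) and \(h_1\neq h_2\), yet \(z_{h_1}z_{-h_2}=1\). Since \(|z_h|=|\cos(2\pi h\delta)|\le 1\) for every \(h\), the product can equal \(1\) only if \(|z_{h_1}|=|z_{-h_2}|=1\). This forces \(|\cos(2\pi h_1\delta)|=|\cos(2\pi h_2\delta)|=1\), that is, \(2h_1\delta\in\mathbb Z\) and \(2h_2\delta\in\mathbb Z\).

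From this we obtain a rational relation for \(\delta\). Indeed, since \(h_1\neq0\), the condition \(2h_1\delta\in\mathbb Z\) gives \(\delta\in\frac{1}{2h_1}\mathbb Z\subset\mathbb Q\). Thus \(1\) and \(\delta\) are linearly dependent over \(\mathbb Q\). On the other hand, \(\boldsymbol{\theta}\) is of \(\lambda\)-Diophantine type with \(\lambda=2/3+\varepsilon<1\), so Theorem~\ref{LI} says that \(1,\sigma,\delta\) are linearly independent over \(\mathbb Q\). This is a contradiction, and the statement follows.

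As a backup, one can argue directly through the Diophantine condition, without invoking Theorem~\ref{LI}. With \(n=2h_1\delta\in\mathbb Z\), we have \(|2h_1\delta-n|=0\). The condition \(z_{h_1}z_{-h_2}=1\) also pins down the phases: \(e((h_1-h_2)\sigma)\,\epsilon_1\epsilon_2=1\), where \(\epsilon_i=\cos(2\pi h_i\delta)=\pm1\) and \(\epsilon_i=(-1)^{2h_i\delta}\). Hence \((h_1-h_2)\sigma\) lies in \(\frac{n'}{2}+\mathbb Z\), where \(n'=2(h_1-h_2)\delta\). Therefore \((h_1-h_2)\boldsymbol\theta\in\mathcal K\), and since \(h_1\neq h_2\) this gives \(\rho_{h_1-h_2}(\boldsymbol\theta)=0\). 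That contradicts the lower bound \(\rho_{h}(\boldsymbol\theta)\ge c|h|^{-\lambda-\varepsilon}>0\).

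There is no real obstacle here. The only care needed is to note that a modulus-one product of elements of \(\overline{\mathbb D}\) forces each factor to have modulus one, and to use \(h_1\neq0\) (or \(h_1\neq h_2\) in the backup argument) to make the resulting relation nontrivial.
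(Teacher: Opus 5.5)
Your proof is correct and takes the paper's approach: the modulus condition forces $|\cos(2\pi h_i\delta)|=1$, and the resulting rational relation contradicts the linear independence of $1,\sigma,\delta$ given by Theorem~\ref{LI}. You stop earlier than the paper, since $2h_1\delta\in\mathbb Z$ with $h_1\neq0$ already suffices and the paper's extra step to a relation for $(h_1-h_2)\sigma$ is unnecessary; your backup argument via $\rho_{h_1-h_2}(\boldsymbol\theta)=0$ is also valid, and it tracks the sign $\epsilon_1\epsilon_2=\pm1$ more carefully than the paper, which writes $e((h_1-h_2)\sigma)=1$.
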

\begin{proof}
 Suppose for contradiction that \(|1 - z_{h_1} z_{-h_2}| = 0\), i.e.,
\[
z_{h_1} z_{-h_2} = 1.
\]
By definition,
\[
z_{h_1} z_{-h_2}
=
e(h_1\sigma)\cos(2\pi h_1\delta) \cdot e(-h_2\sigma)\cos(2\pi h_2\delta)
=
e\bigl((h_1-h_2)\sigma\bigr)\cos(2\pi h_1\delta)\cos(2\pi h_2\delta).
\]
Hence
\[
e\bigl((h_1-h_2)\sigma\bigr)\cos(2\pi h_1\delta)\cos(2\pi h_2\delta) = 1.
\]
Taking absolute values gives
\[
|\cos(2\pi h_1\delta)| = |\cos(2\pi h_2\delta)| = 1,
\]
so there exist integers \(k_1, k_2\) such that
\[
h_1\delta = \frac{k_1}{2}, \quad h_2\delta = \frac{k_2}{2}.
\]
Substituting these back into the equation yields
\[
e\bigl((h_1-h_2)\sigma\bigr) = 1,
\]
so there exists an integer \(k_3\neq 0\) such that
\[
(h_1-h_2)\sigma = \frac{k_3}{2}.
\]
Thus there is a nontrivial rational linear relation among \(1,\sigma,\delta\), contradicting the assumed linear independence of \(1,\sigma,\delta\) over \(\mathbb{Q}\). 
\end{proof}

\begin{lemma}\label{var:1-4}
We have
\[
\operatorname{Var}^{\mathrm{off},1-4}_{\mathcal B}(R_N(f)) \ll N^{-2/3+12r}.
\]
\end{lemma}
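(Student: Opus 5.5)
We start from the bound already recorded before the statement,
\[
\operatorname{Var}^{\mathrm{off},1-4}_{\mathcal B}(R_N(f))
\ll
N^{-3+12r}
\sum_{(h_1,h_2)\in\mathcal B_N}
\Bigl|\widehat f\!\Bigl(\tfrac{h_1}{N}\Bigr)\Bigr|
\Bigl|\widehat f\!\Bigl(\tfrac{h_2}{N}\Bigr)\Bigr|
\frac{\mathbf 1_{\{|1-z_{h_1}z_{-h_2}|\le N^{-r}/2\}}}{|1-z_{h_1}z_{-h_2}|+N^{-1}} .
\]
The plan is to fix $h_1$ and treat $A_0:=z_{h_1}\in\overline{\mathbb D}$ as the parameter in Lemma~\ref{df}. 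We then control the inner sum over $h_2$. This is done by a dyadic decomposition in the size of $|1-A_0z_{-h_2}|$, combined with the separation furnished by Lemma~\ref{df} and the $\lambda$-Diophantine condition.

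First we truncate. By the rapid decay of $\widehat f$, the range $|h_1|>N^{1+\varepsilon}$ or $|h_2|>N^{1+\varepsilon}$ contributes $O(N^{-A})$ for any $A$. Here the trivial bound $(|1-z_{h_1}z_{-h_2}|+N^{-1})^{-1}\le N$ suffices. So we may restrict to $|h_1|,|h_2|\le N^{1+\varepsilon}$.

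For fixed $h_1$ and dyadic $\eta=2^{-k}$ with $N^{-1}\ll\eta\le N^{-r}/2$, let $\mathcal Q_{h_1,\eta}$ be the set of $h_2$ with $\eta/2<|1-z_{h_1}z_{-h_2}|\le\eta$, together with one extra class $|1-z_{h_1}z_{-h_2}|\le N^{-1}$. Lemma~\ref{>0} guarantees that no $h_2$ gives an exact zero, but the weight $N^{-1}$ in the denominator makes this irrelevant anyway.

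For any two distinct $q_1,q_2\in\mathcal Q_{h_1,\eta}$, Lemma~\ref{df} gives $\rho_{q_1-q_2}(\boldsymbol\theta)\ll\eta$. The Diophantine condition then gives $|q_1-q_2|\gg\eta^{-1/\lambda}$. Consequently
\[
\#\mathcal Q_{h_1,\eta}\ll N^{1+\varepsilon}\eta^{1/\lambda}+1,
\]
uniformly in $h_1$.

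The contribution of this dyadic class to the inner sum is therefore
\[
\ll \eta^{-1}\bigl(N^{1+\varepsilon}\eta^{1/\lambda}+1\bigr)\ll N^{1+\varepsilon}\eta^{1/\lambda-1}+\eta^{-1}.
\]
Since $1/\lambda>1$, summing over dyadic $\eta$ between $N^{-1}$ and $N^{-r}$ gives at most
\[
N^{1+\varepsilon}+N.
\]
Here the $\eta^{-1}$ terms are dominated by the smallest scale $\eta\approx N^{-1}$, and the bottom class contributes $N\cdot O(1)$.

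The crude version of this estimate only yields $N^{-3+12r}\cdot N\cdot N^{1+\varepsilon}$, which is not small enough. The refinement is to exploit the weight $|\widehat f(h_2/N)|$ as well: we use separation within windows of length $N$ rather than the whole range $N^{1+\varepsilon}$. Each window of length $N$ contains at most $N\eta^{1/\lambda}+1$ points of $\mathcal Q_{h_1,\eta}$. Summing over windows weighted by the decay $(1+j)^{-M}$ of $\widehat f$ gives an inner sum
\[
\ll \sum_\eta \eta^{-1}\bigl(N\eta^{1/\lambda}+1\bigr)\ll N.
\]
The weak point is the $+1$ term at small $\eta$. There is at most one such point per window at scale $\eta\sim N^{-1}$, and its contribution $\eta^{-1}=N$ is genuinely there.

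To save the extra $N^{1/3}$, we swap the roles of $h_1$ and $h_2$ in counting. The condition $|1-z_{h_1}z_{-h_2}|\le\eta$ forces both $|z_{h_1}|$ and $|z_{h_2}|$ to be close to $1$, more precisely $1-|z_{h_i}|\le\eta$. So both $h_1$ and $h_2$ lie in the set where $|\cos(2\pi h\delta)|\ge1-\eta$. We then count pairs by a two-dimensional separation argument, in the spirit of the proof of Lemma~\ref{diag}. For fixed $h_1$, the admissible $h_2$ are $\eta^{-1/\lambda}$-separated. In addition, the $h_1$ admitting any partner at scale $\eta$ are themselves constrained by $\|2h_1\delta\|\ll\eta^{1/2}$, which is a one-dimensional Diophantine condition. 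Combining these two constraints should give a total pair count of
\[
\ll N^{2}\eta^{1/\lambda+1/2}+N\eta^{1/2}\cdots,
\]
and inserting this into $N^{-3+12r}\sum_\eta\eta^{-1}(\cdots)$ should yield the target $N^{-2/3+12r}$. The exponent $2/3=1-1/3$ is consistent with a saving of $N^{1/3}$ from the exponent $1/\lambda>3/2$.

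The main obstacle is precisely this last step: making the count of $h_1$ with $\|2h_1\delta\|$ small rigorous using only the two-dimensional $\rho_h$ condition. The first thing to try is to apply Lemma~\ref{df} with $A_0=1$-type centers to pairs of such $h_1$. Failing that, we would accept the single-window bound and tune $r$.
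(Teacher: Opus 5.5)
Your first half already proves the lemma, and it is essentially the paper's argument. You fix $h_1$ and apply Lemma~\ref{df} with $A_0=z_{h_1}$ to get $\eta^{-1/\lambda}$-separation of the $h_2$ in each dyadic class of $|1-z_{h_1}z_{-h_2}|$. You then count $\ll N^{1+\varepsilon}\eta^{1/\lambda}+1$ such $h_2$ with $|h_2|\le N^{1+\varepsilon}$, and sum over $\eta$ to get an inner sum $\ll N^{1+\varepsilon}$ uniformly in $h_1$. The paper additionally splits $|h_2|$ dyadically, which only costs a harmless $\log N$.

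The error is in how you assess this bound. You have $N^{-3+12r}\cdot N\cdot N^{1+\varepsilon}=N^{-1+\varepsilon+12r}$, and since $\varepsilon<1/3$ this is at most $N^{-2/3+12r}$. That is exactly how the paper concludes. There is no factor $N^{1/3}$ to save: the stated bound is weaker than the one you had already obtained, not stronger.

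Everything from ``To save the extra $N^{1/3}$'' onward should therefore be dropped. As written it is not an argument. The pair count $N^{2}\eta^{1/\lambda+1/2}+\cdots$ is only asserted (``should give''), and you yourself leave the control of the $h_1$ with $\|2h_1\delta\|\ll\eta^{1/2}$ as an unresolved obstacle. So the proposal as submitted ends without a completed proof, even though it contains one.

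Nor is any tuning of $r$ needed. Your windowed refinement is valid: at most $N\eta^{1/\lambda}+1$ points per window of length $N$, weighted by the decay of $\widehat f$. It gives an inner sum $\ll N$ uniformly in $h_1$, and hence the even stronger bound $N^{-1+12r}$, which also implies the lemma for every $0<r<1$.
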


\begin{proof}
Recall that
\begin{align*}
\operatorname{Var}^{\mathrm{off},1-4}_{\mathcal B}(R_N(f))
&\ll
N^{-3+12r}
\sum_{(h_1, h_2) \in \mathcal B_N}
\left|\widehat f\!\left(\frac{h_1}{N}\right)\right|
\left|\widehat f\!\left(\frac{h_2}{N}\right)\right|
\frac{\mathbf{1}_{\{|1 - z_{h_1} z_{-h_2}| \le N^{-r}/2\}}(h_1, h_2)}{|1 - z_{h_1} z_{-h_2}|+N^{-1}}.
\end{align*}
Fix \(h_1\). We estimate the inner sum over \(h_2\),
\[
S(h_1)
:=
\sum_{h_2\colon (h_1, h_2) \in \mathcal B_N}
\left|\widehat f\!\left(\frac{h_2}{N}\right)\right|
\frac{\mathbf{1}_{\{|1 - z_{h_1} z_{-h_2}| \le N^{-r}/2\}}}{|1 - z_{h_1} z_{-h_2}|+N^{-1}}.
\]
For $0<\varepsilon < {1}/{3},$  write $\lambda= {2}/{3}+\varepsilon.$  We split \(S(h_1)\) into two parts, according to whether \(|h_2|\le N^{1+\varepsilon}\) or \(|h_2|>N^{1+\varepsilon}\).

\medskip
\noindent\textbf{Part 1: \(|h_2| \le N^{1+\varepsilon}\).}
By Lemma~\ref{>0}, for  \((h_1, h_2) \in \mathcal B_N\),
\[
 |1 - z_{h_1} z_{-h_2}| > 0.
\]
Hence there exists a unique integer \(k \ge 0\) such that
\[
2^{-(k+1)} \le |1 - z_{h_1} z_{-h_2}| < 2^{-k}<N^{-r}/2 \quad (k \ge r \log_2 N).
\]

For \(j, k \in \mathbb N\), define
\[
\mathcal B_{j,k}(h_1)
:=
\left\{
h_2 \in \mathbb Z \setminus \{0\} :
2^j \le |h_2| < 2^{j+1},\
2^{-(k+1)} \le |1 - z_{h_1} z_{-h_2}|  < 2^{-k}
\right\}.
\]
By Lemma~\ref{df}, for any two distinct \(h_2, h_2' \in \mathcal B_{j,k}(h_1)\),
\[
2^{-j\lambda}\ll|h_2 - h_2'|^{-\lambda}
\ll \rho_{h_2 - h_2'}(\boldsymbol{\theta})
\ll 2^{-k}.
\]
Hence $k\ll j\lambda$ and  \(|h_2 - h_2'| \gg 2^{k/\lambda}\). Since \(\mathcal B_{j,k}(h_1) \subset [-2^{j+1}, 2^{j+1}]\), we obtain
\[
\#  \mathcal B_{j,k}(h_1)
\ll
\frac{2^j}{2^{k/\lambda}} + 1.
\]

We now estimate the inner sum over \(h_2\) with \(|h_2| \le N^{1+\varepsilon}\):
\begin{align*}
S_1(h_1)
&:=
\sum_{\substack{h_2 : (h_1, h_2) \in \mathcal B_N \\ 0 < |h_2| \le N^{1+\varepsilon}}}
\left|\widehat f\!\left(\frac{h_2}{N}\right)\right|
\frac{\mathbf{1}_{\{d(h_1, h_2) \le N^{-r}/2\}}}{d(h_1, h_2) + N^{-1}}\\ 
&\ll
\sum_{j=0}^{\lceil (1+\varepsilon)\log N \rceil}
\sum_{\log N \ll k \ll j\lambda}
\sum_{h_2 \in \mathcal B_{j,k}(h_1)}
\left|\widehat f\!\left(\frac{h_2}{N}\right)\right| \cdot 2^k \\
&\ll
\sum_{j=0}^{\lceil (1+\varepsilon)\log N \rceil}
\sum_{\log N \ll k \ll j\lambda}
\#  \mathcal B_{j,k}(h_1) \cdot 2^k\\
&\ll \sum_{j=0}^{\lceil (1+\varepsilon)\log N \rceil}
\sum_{\log N \ll k \ll j\lambda}
\left( \frac{2^j}{2^{k/\lambda}} + 1 \right) 2^k\\
&\asymp
 N^{(1+\varepsilon)}+N\log N.
\end{align*}

Therefore, for sufficiently large $N,$
\[
S_1(h_1) \ll N^{(1+\varepsilon)\lambda}.
\]

\medskip
\noindent\textbf{Part 2: \(|h_2| > N^{1+\varepsilon}\).}
By the rapid decay of \(\widehat f\),
\[
\left|\widehat f\!\left(\frac{h_2}{N}\right)\right| \ll \left(1+\frac{|h_2|}{N}\right)^{-M}
\]
for every \(M > 0\). Moreover, the factor
\[
\frac{\mathbf{1}_{\{|1 - z_{h_1} z_{-h_2}| \le N^{-r}/2\}}}{|1 - z_{h_1} z_{-h_2}|+N^{-1}}
\]
is bounded by \(N\). Hence
\[
S_2(h_1)
:=
\sum_{\substack{h_2 : (h_1, h_2) \in \mathcal B_N \\ |h_2| > N^{1+\varepsilon}}}
\left|\widehat f\!\left(\frac{h_2}{N}\right)\right|
\frac{\mathbf{1}_{\{|1 - z_{h_1} z_{-h_2}| \le N^{-r}/2\}}}{|1 - z_{h_1} z_{-h_2}|+N^{-1}}
\ll
N \sum_{|h_2| > N^{1+\varepsilon}} \left(1+\frac{|h_2|}{N}\right)^{-M}.
\]
Taking \(M\) sufficiently large (say \(M > 2/\varepsilon\)), the sum is \(\ll_{M} N^{-M\varepsilon+2} \ll 1\).

\medskip
Combining the two parts,
\[
S(h_1) \ll N^{1+\varepsilon}.
\]
Substituting back,
\[
\operatorname{Var}^{\mathrm{off},1-4}_{\mathcal B}(R_N(f))
\ll
N^{-2+\varepsilon+12r}
\sum_{h_1 \in \mathbb Z \setminus \{0\}}
\left|\widehat f\!\left(\frac{h_1}{N}\right)\right|.
\]
By the rapid decay of \(\widehat f\),
\[
\sum_{h_1 \in \mathbb Z \setminus \{0\}}
\left|\widehat f\!\left(\frac{h_1}{N}\right)\right| \ll N.
\]
Therefore,
\begin{equation*}
 \operatorname{Var}^{\mathrm{off},1-4}_{\mathcal B}(R_N(f))
\ll
N^{-1+\varepsilon+12r}\le N^{-2/3+12r}.   
\end{equation*}
\end{proof}

\medskip

\noindent\underline{\textbf{Part 2:}}\quad At least two of the following  are \(\le N^{-r}\):
\[
|1-z_{h_1-h_2}|,\quad |1-z_{h_1}z_{-h_2}|,\quad |1-z_{h_1}|,\quad |1-z_{-h_2}|.
\]

\medskip
\underline{\textbf{Subpart 2-1:}}\quad  \(|1 - z_{h_1} z_{-h_2}|, |1 - z_{h_1-h_2}| \le N^{-r}\), and \(|1 - z_{h_1}|, |1 - z_{-h_2}| \ge N^{-r}\).

In this subpart, the two ``bad'' conditions hold simultaneously, while \(z_{h_1}\) and \(z_{-h_2}\) are both bounded away from \(1\).
Recall that
\[
C_N(h_1,h_2)
=
[t^N]\frac{1}{(1-t)^2}
\left(
\frac{1}{1-z_{h_1-h_2}t}
-
\frac{1}{1-z_{h_1}z_{-h_2}t}
\right)
\left(
\frac{1}{1-z_{h_1}t}
+
\frac{1}{1-z_{-h_2}t}
-
1
\right)^2.
\]
For \(\varpi\in\{z_{h_1-h_2},z_{h_1}z_{-h_2}\}\), we define
\[
C_N(h_1,h_2;\varpi)
:=
\left|
[t^N]\frac{1}{(1-t)^2}
\frac{1}{1-\varpi t}
\left(
\frac{1}{1-z_{h_1}t}
+
\frac{1}{1-z_{-h_2}t}
-
1
\right)^2
\right|.
\]
Hence
\[
|C_N(h_1,h_2)|
\le
C_N(h_1,h_2;z_{h_1-h_2})
+
C_N(h_1,h_2;z_{h_1}z_{-h_2}).
\]
It suffices to bound the two terms on the right-hand side of the last inequality. To this end, we treat these two terms as in Subparts 1-3 and 1-4, respectively, and then apply \eqref{var:1-3} and Lemma~\ref{var:1-4} to obtain
\[
\operatorname{Var}^{\mathrm{off},2-1}_{\mathcal{B}}(R_N(f))
\ll
N^{-1/3+5r} + N^{-2/3+12r}
\ll
N^{-1/3+12r}.
\]

\medskip
\underline{\textbf{Subpart 2-2:}}\quad  Remaining cases.

To treat the remaining cases of Part 2 in a unified manner, we need to understand how the following four quantities interact:
\[
|1-z_{h_1-h_2}|,\quad
|1-z_{h_1}z_{-h_2}|,\quad
|1-z_{h_1}|,\quad
|1-z_{-h_2}|.
\]
The next lemma shows that, apart from the configuration treated in Subpart 2-1, if any two of these quantities are small, then so are the other two.
\begin{lemma}\label{4b}
Let \(h_1,h_2\in\mathbb Z\setminus\{0\}\) with \(h_1\neq h_2\). Define
\[
d_1:=|1-z_{h_1-h_2}|,\quad
d_2:=|1-z_{h_1}z_{-h_2}|,\quad
d_3:=|1-z_{h_1}|,\quad
d_4:=|1-z_{-h_2}|.
\]
Suppose at least two of the \(d_i\) are \(\le N^{-r}\). Then all four are \(\ll N^{-r}\), except in the case where \(d_1,d_2\le N^{-r}\) and \(d_3,d_4\ge N^{-r}\).
\end{lemma}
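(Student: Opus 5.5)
The plan is to reduce everything to two elementary inequalities. One is multiplicative, on the unit disc; the other is additive, for $\rho$ on $\mathbb T^2$. Throughout, $N$ is taken large enough that $N^{-r}\le \eta_{\mathrm{loc}}$ (after shrinking constants), so that Theorem~\ref{thm:local} and Remark~\ref{rem:local} apply and give $d_1\asymp\rho_{h_1-h_2}(\boldsymbol\theta)$, $d_3\asymp\rho_{h_1}(\boldsymbol\theta)$ and $d_4\asymp\rho_{-h_2}(\boldsymbol\theta)=\rho_{h_2}(\boldsymbol\theta)$ whenever the relevant quantity is small. The converse direction also holds: if $\rho_h(\boldsymbol\theta)$ is small then $h\boldsymbol\theta$ lies in a small ball around $\mathcal K$, hence $|1-Z(h\boldsymbol\theta)|\asymp\rho_h(\boldsymbol\theta)$.

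\emph{Multiplicative inequality.} For $a,b\in\overline{\D}$ we have
\[
|1-ab|\le|1-a|+|1-b|,\qquad |1-b|\le|1-ab|+|1-a|,
\]
since $1-ab=(1-a)+a(1-b)$ and $1-b=(1-ab)+b(a-1)$, with $|a|,|b|\le1$. Taking $a=z_{h_1}$ and $b=z_{-h_2}$, each of $d_2,d_3,d_4$ is bounded by the sum of the other two.

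\emph{Additive inequality.} I claim
\[
\rho_{h+h'}(\boldsymbol\theta)\le 2\bigl(\rho_h(\boldsymbol\theta)+\rho_{h'}(\boldsymbol\theta)\bigr).
\]
To see this, choose minimizers $(m,n)$ for $h$ and $(m',n')$ for $h'$, and use $(m+m',n+n')$ for $h+h'$. The first coordinate satisfies the triangle inequality, and the second gives $|y+y'|^2\le2(y^2+y'^2)$. This works because the centres $(n/2,n)$ add correctly; this is exactly the group property $\mathcal K+\mathcal K=\mathcal K$ already used in Lemma~\ref{df}. Applying this with $h'$ replaced by $-h'$, and using $\rho_{-h}=\rho_h$, shows that among $d_1,d_3,d_4$ each is $\ll$ the sum of the other two, provided the two on the right are $\le N^{-r}$ so that the local comparisons apply.

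Then a case check over pairs of small $d_i$ finishes the proof.
\begin{itemize}
\item If $d_3,d_4\le N^{-r}$, then $d_2\le 2N^{-r}$ by the multiplicative inequality, and $d_1\ll N^{-r}$ by the additive one.
\item If $d_3,d_1\le N^{-r}$, then $\rho_{h_2}\ll\rho_{h_1}+\rho_{h_1-h_2}\ll N^{-r}$, so $d_4\ll N^{-r}$, and then $d_2\ll N^{-r}$. The pair $(d_4,d_1)$ is symmetric.
\item If $d_3,d_2\le N^{-r}$, then $d_4\le d_2+d_3\le 2N^{-r}$, and we reduce to the first case. The pair $(d_4,d_2)$ is symmetric.
\item If $d_1,d_2\le N^{-r}$ and at least one of $d_3,d_4$ is $<N^{-r}$, we are back in one of the previous cases. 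Otherwise we are in the excluded configuration.
\end{itemize}

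The only delicate step is making sure the local comparison is legitimate in both directions. We need $\rho_h\ll N^{-r}$ to force $h\boldsymbol\theta$ into $B(\mathbf k_i,\eta_{\mathrm{loc}})$, which holds because $|x|+y^2$ small forces $(x,y)$ close to a point of $\mathcal K$. We also need consistency of the parity choice $n$ in the definition of $\rho$. This parity issue is where I expect to spend care, and I would handle it exactly as in the centre comparison in Lemma~\ref{df}.
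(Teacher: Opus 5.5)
Your proposal is correct and follows essentially the paper's route: the same pairwise case check, with the paper's appeal to the ``local difference-frequency estimate'' and $\mathcal K\pm\mathcal K=\mathcal K$ made explicit as $\rho_{h+h'}(\boldsymbol\theta)\le 2(\rho_h(\boldsymbol\theta)+\rho_{h'}(\boldsymbol\theta))$. You also replace the paper's use of $|z_{h_1}|\ge 1/2$ in its case (4) by the cleaner identity $1-b=(1-ab)+b(a-1)$, and the parity worry you flag is already absorbed by that additive inequality, while the direction $|1-Z(h\boldsymbol\theta)|\ll\rho_h(\boldsymbol\theta)$ holds for every $h$, since $|1-e(x)\cos(2\pi y)|\le 2\pi|x|+2\pi^2y^2$ and $Z$ is invariant under translation by $\mathbf k_2$.
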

 
\begin{proof} We say that a point \(z\) is bad if \(|1-z|\ll N^{-r}\). The proof of the lemma proceeds by examining the following cases.

\medskip

\noindent\textbf{(1)}\quad \(z_{h_1}\) and \(z_{-h_2}\) bad \(\Longrightarrow\) \(z_{h_1-h_2}\) and \(z_{h_1} z_{-h_2}\) bad.

Since
\[
(h_1 - h_2)\boldsymbol{\theta} = h_1\boldsymbol{\theta} + (-h_2)\boldsymbol{\theta},
\]
and \(\mathcal K + \mathcal K = \mathcal K\), the local difference-frequency estimate gives
\[
|1 - z_{h_1-h_2}| \ll N^{-r}.
\]
Moreover,
\[
1 - z_{h_1} z_{-h_2} = (1 - z_{h_1}) + z_{h_1}(1 - z_{-h_2}),
\]
so
\[
|1 - z_{h_1} z_{-h_2}| \le |1 - z_{h_1}| + |1 - z_{-h_2}| \ll N^{-r}.
\]

\medskip
\noindent\textbf{(2)}\quad  \(z_{h_1}\) and \(z_{h_1-h_2}\) bad \(\Longrightarrow\) \(z_{-h_2}\) and \(z_{h_1} z_{-h_2}\) bad.

Since
\[
-h_2\boldsymbol{\theta} = (h_1 - h_2)\boldsymbol{\theta} - h_1\boldsymbol{\theta},
\]
and \(\mathcal K - \mathcal K = \mathcal K\), the local difference-frequency estimate gives
\[
|1 - z_{-h_2}| \ll N^{-r}.
\]
And \(|1 - z_{h_1} z_{-h_2}| \ll N^{-r}\) follows as in \textbf{(1)}.

\medskip
\noindent\textbf{(3)}\quad  \(z_{-h_2}\) and \(z_{h_1-h_2}\) bad \(\Longrightarrow\) \(z_{h_1}\) and \(z_{h_1} z_{-h_2}\) bad.

This is symmetric to \textbf{(2)}, with \(h_1\) and \(-h_2\) interchanged.

\medskip
\noindent\textbf{(4)}\quad  \(z_{h_1}\) and \(z_{h_1} z_{-h_2}\) bad \(\Longrightarrow\) \(z_{-h_2}\) and \(z_{h_1-h_2}\) bad.

We use the identity
\[
z_{h_1} z_{-h_2} - z_{h_1} = z_{h_1}(z_{-h_2} - 1).
\]
Since \(|1 - z_{h_1}| \ll N^{-r}\), we have \(|z_{h_1}| \ge 1/2\) after decreasing the implicit constant in \(N^{-r}\) if necessary. Hence
\[
|1 - z_{-h_2}|
=
\frac{|z_{h_1} z_{-h_2} - z_{h_1}|}{|z_{h_1}|}
\le
2\bigl(|1 - z_{h_1} z_{-h_2}| + |1 - z_{h_1}|\bigr)
\ll N^{-r}.
\]
Then \textbf{(1)} gives \(|1 - z_{h_1-h_2}| \ll N^{-r}\).

\medskip
\noindent\textbf{(5)}\quad  \(z_{-h_2}\) and \(z_{h_1} z_{-h_2}\) bad \(\Longrightarrow\) \(z_{h_1}\) and \(z_{h_1-h_2}\) bad.

This is symmetric to \textbf{(4),} with \(h_1\) and \(-h_2\) interchanged.
\end{proof}

\medskip
By Lemma~\ref{4b}, the points \(h_1\boldsymbol{\theta}\) and \(-h_2\boldsymbol{\theta}\) both lie in a cusp neighborhood of \(1\). Hence we may write
\begin{equation}\label{lco}
h_1\boldsymbol{\theta}
=
\mathbf{k}_1+\left(u_1(h_1),\frac{v_1(h_1)}{2}\right),
\qquad
-h_2\boldsymbol{\theta}
=
\mathbf{k}_2+\left(u_2(h_2),\frac{v_2(h_2)}{2}\right),
\end{equation}
where the local scale is chosen sufficiently small. For notational simplicity, for \(i=1,2\), we abbreviate \(u_i(h_i)\) and \(v_i(h_i)\) by \(u_i\) and \(v_i\), respectively.

\begin{lemma}\label{lco-1}
In the local coordinates \eqref{lco}, for  \(1 \le n \le N\),
\[
\boxed{
|z_{h_1-h_2}^n - (z_{h_1} z_{-h_2})^n|
\ll
\frac{|v_1v_2|}{s_1 + s_2 + N^{-1}}.
}
\]
\end{lemma}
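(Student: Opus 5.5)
We will compute both bases explicitly in the local coordinates \eqref{lco}, factor out the common phase, and then bound a difference of $n$-th powers of two real numbers. Throughout, $s_i$ denotes the local size parameter attached to $h_1\boldsymbol{\theta}$, $-h_2\boldsymbol{\theta}$ in \eqref{lco}. The argument needs the comparison
\[
1-\cos(\pi v_i)\asymp s_i .
\]
This is automatic for the radial part $v_i^2$. Whether it holds for the full anisotropic size $|u_i|+v_i^2$ is exactly the point flagged in the last paragraph.

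\textbf{Step 1 (explicit form of the two bases).} From $h_1\boldsymbol{\theta}=\mathbf{k}_1+(u_1,v_1/2)$ we get $z_{h_1}=e(u_1)\cos(\pi v_1)$. From $-h_2\boldsymbol{\theta}=\mathbf{k}_2+(u_2,v_2/2)$ we get
\[
z_{-h_2}=e\!\left(\tfrac12+u_2\right)\cos(\pi+\pi v_2)=e(u_2)\cos(\pi v_2).
\]
Since $\mathbf{k}_1+\mathbf{k}_2=\mathbf{k}_2$, the same computation gives $z_{h_1-h_2}=e(u_1+u_2)\cos(\pi(v_1+v_2))$. Hence
\[
z_{h_1-h_2}=e(u_1+u_2)\,c,\qquad z_{h_1}z_{-h_2}=e(u_1+u_2)\,d,
\]
with $c=\cos(\pi(v_1+v_2))$ and $d=\cos(\pi v_1)\cos(\pi v_2)$. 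Both $c$ and $d$ are real and lie in $(0,1]$ once the local scale is small. The common phase has modulus one, so $|z_{h_1-h_2}^n-(z_{h_1}z_{-h_2})^n|=|c^n-d^n|$, and the $u$-coordinates drop out of the modulus entirely.

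\textbf{Step 2 (mean value bound).} The addition formula gives $c-d=-\sin(\pi v_1)\sin(\pi v_2)$, so $|c-d|\asymp|v_1v_2|$. Write $\mu:=\max(c,d)$. Then
\[
|c^n-d^n|\le n\,|c-d|\,\mu^{\,n-1}\ll |v_1v_2|\min\Bigl\{n,\ \sup_{m\ge1}m\mu^{m-1}\Bigr\}\ll |v_1v_2|\min\Bigl\{N,\ \frac{1}{1-\mu}\Bigr\}.
\]
Since $\min\{N,1/x\}\asymp 1/(x+N^{-1})$, it remains to show $1-\mu\gg s_1+s_2$. Taylor expansion gives $1-d\asymp v_1^2+v_2^2$ and $1-c\asymp(v_1+v_2)^2$. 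The latter can degenerate when $v_1\approx -v_2$. In that regime I will instead use $c\le 1$ together with the cruder bound $|c^n-d^n|\le|1-d^n|+|1-c^n|$ and a direct comparison. Note $(v_1+v_2)^2\ll|v_1v_2|$ exactly when $v_1\approx -v_2$, and then $v_1^2+v_2^2\asymp|v_1v_2|$. So in this regime the target right-hand side is $\asymp 1$ as soon as $v_1^2+v_2^2\gg N^{-1}$, and the bound is trivial. When instead $v_1^2+v_2^2\ll N^{-1}$, the factor-$N$ bound $N|c-d|\ll N|v_1v_2|$ suffices.

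\textbf{Main obstacle.} The delicate point is matching the denominator exactly to $s_1+s_2$, i.e. establishing $1-\max(c,d)\gg s_1+s_2$ from the parabolic geometry of \eqref{lco}. Step 1 shows that the modulus of each base sees only the radial deficit $1-|z|$, which is governed by the $v_i$. I will therefore verify that the size parameters $s_i$ chosen in \eqref{lco} are comparable to $1-|z_{h_1}|$ and $1-|z_{-h_2}|$, that is, to $1-\cos(\pi v_i)$. If instead $s_i$ includes the phase part $|u_i|$, the phase must be exploited. Since $e(u_1+u_2)$ is common to both bases it cancels in the modulus, so the lemma must then be combined with the $t$-summation of Lemma~\ref{c2}, where the $u$-parts enter through $|1-z|\asymp s$. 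That adjustment would be the first thing I check.
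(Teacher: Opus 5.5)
Your proof is correct if $s_i$ is read as $v_i^2$, which is what the paper intends: its proof ends with the denominator $v_1^2+v_2^2+N^{-1}$, and Lemma~\ref{cov-bad} uses exactly that form. Your sign $\cos(\pi(v_1+v_2))$ is the right one from \eqref{lco}. The paper's $\cos(\pi(v_1-v_2))$ is a harmless slip, since only $|\sin(\pi v_1)\sin(\pi v_2)|\ll|v_1v_2|$ enters.

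Your route differs from the paper's in one step. You apply the mean value theorem with $\mu=\max(c,d)$, which brings in $1-c\asymp(v_1+v_2)^2$, and this degenerates when $v_1\approx -v_2$. Your case split repairs this validly: in the anti-aligned regime $|v_1v_2|\asymp v_1^2+v_2^2$, so the target is $\asymp\min\{1,N|v_1v_2|\}$, and the trivial bounds $2$ and $n|c-d|$ suffice.

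The paper never meets this degeneracy. In $a^n-b^n=(a-b)\sum_{k=0}^{n-1}a^{n-1-k}b^k$ it bounds $|a|=|z_{h_1-h_2}|\le 1$. It then keeps only $\sum_{k<n}|b|^k\le\min\{n,(1-|b|)^{-1}\}$ with $b=z_{h_1}z_{-h_2}$. The deficit $1-|\cos(\pi v_1)\cos(\pi v_2)|\asymp v_1^2+v_2^2$ is insensitive to the signs of $v_1,v_2$. So the paper's argument is two lines with no cases.

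Your version has one advantage: factoring out the common phase $e(u_1+u_2)$ makes it transparent that the $u_i$ play no role. That also settles your closing hedge. The left side is independent of $u_1,u_2$, so a bound with $|u_i|$ in the denominator could not hold uniformly in the local coordinates. For example, $v_1=v_2=N^{-1/2}$ and $n=N$ give a left side $\asymp 1$, while the right side would tend to $0$ once $|u_i|\gg N^{-1}$. There is therefore nothing to ``exploit'' in the phase, and no need to route through Lemma~\ref{c2}.
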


\begin{proof}
By the difference-of-powers identity,
\[
z_{h_1-h_2}^n - (z_{h_1} z_{-h_2})^n
=
\bigl(z_{h_1-h_2} - z_{h_1} z_{-h_2}\bigr)
\sum_{k=0}^{n-1} z_{h_1-h_2}^{n-1-k} (z_{h_1} z_{-h_2})^k.
\]
By \eqref{lco},
\[
z_{h_1-h_2} = e(u_1 + u_2)\cos(\pi(v_1 - v_2)),
\quad 
z_{h_1} z_{-h_2} = e(u_1 + u_2)\cos(\pi v_1)\cos(\pi v_2).
\]
Hence
\[
z_{h_1-h_2} - z_{h_1} z_{-h_2}
=
e(u_1 + u_2)\bigl[\cos(\pi(v_1 - v_2)) - \cos(\pi v_1)\cos(\pi v_2)\bigr].
\]
Using the identity
\[
\cos(\pi(v_1 - v_2)) - \cos(\pi v_1)\cos(\pi v_2) = \sin(\pi v_1)\sin(\pi v_2),
\]
and \(|\sin(\pi x)| \le \pi|x|\), we obtain
\[
|z_{h_1-h_2} - z_{h_1} z_{-h_2}|
\ll |v_1 v_2|.
\]

If \(v_1 v_2 = 0\), then \(z_{h_1-h_2} = z_{h_1} z_{-h_2}\), which yields the desired result.

If \(v_1 v_2 > 0\), then
\[
|z_{h_1-h_2}^n - (z_{h_1} z_{-h_2})^n|
\ll |v_1 v_2| \sum_{k=0}^{n-1} |z_{h_1} z_{-h_2}|^k
\ll |v_1 v_2| \min\left\{\frac{1}{1 - |z_{h_1} z_{-h_2}|}, n\right\}.
\]
Since
\[
1 - |z_{h_1} z_{-h_2}|
=
1 - |\cos(\pi v_1)\cos(\pi v_2)|
=
1 - |\cos(\pi v_1)| \cdot |\cos(\pi v_2)|,
\]
and \(1 - |\cos(\pi v_i)| \asymp v_i^2 \) for \(i = 1, 2\), we obtain
\[
1 - |z_{h_1} z_{-h_2}| \asymp v_1^{2} + v_2^{2}.
\]
Therefore,
\[
|v_1 v_2|\min\left\{\frac{1}{1 - |z_{h_1} z_{-h_2}|}, n\right\}
\ll
\frac{|v_1 v_2|}{v_1^{2} + v_2^{2} + n^{-1}}
\le
\frac{|v_1 v_2|}{v_1^{2} + v_2^{2} + N^{-1}}.
\]
\end{proof}

Recall that
\[
\mathscr H
=
\{(2,0),(0,2),(1,1),(1,0),(0,1),(0,0)\}.
\]

\begin{lemma}
\label{bad-ab}
For $(a,b)\in\mathscr{H}$, we have
\[\sum_{n=0}^{N}\left|
[t^n]
\frac{1}
{(1-t)^2(1-z_{h_1}t)^a(1-z_{-h_2}t)^b}
\right|
\ll
\frac{N^{2}}{(|1-z_{h_1}|+N^{-1})^{a}(|1-z_{-h_2}|+N^{-1})^{b}}\]
\end{lemma}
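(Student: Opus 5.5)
The plan is to prove the bound one coefficient at a time and then sum trivially over $n\le N$. It suffices to show that for every $0\le n\le N$,
\[
\left|[t^n]\frac{1}{(1-t)^2(1-z_{h_1}t)^a(1-z_{-h_2}t)^b}\right|
\ll \frac{N}{(|1-z_{h_1}|+N^{-1})^{a}(|1-z_{-h_2}|+N^{-1})^{b}}.
\]
Summing $N+1$ such terms then gives the factor $N^2$. Write $X=z_{h_1}$, $Y=z_{-h_2}$, $d_X=|1-X|+N^{-1}$ and $d_Y=|1-Y|+N^{-1}$. Note that $d_X,d_Y\ll 1$ and $d_X,d_Y\ge N^{-1}$, so we may freely weaken bounds by factors $d_X^{-1}$ or $d_Y^{-1}$ when the exponent is smaller than required.

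The six cases go as follows.

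\emph{Case $(0,0)$.} The coefficient is $n+1\le N+1$, which is $\ll N$.

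\emph{Cases $(1,0)$ and $(0,1)$.} The coefficient is $F_n(X)$ (respectively $F_n(Y)$). By \eqref{X} this is $\ll n/(|1-X|+n^{-1})$. Since $n\le N$, this is at most $N/d_X$: indeed $\min\{n^2,n/|1-X|\}\le \min\{N^2,N/|1-X|\}$.

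\emph{Cases $(2,0)$ and $(0,2)$.} Use Lemma~\ref{XY}(i), i.e.\ the bound $|F'_{n+1}(X)|\ll \min\{n^3,n/|1-X|^2\}\le N/d_X^2$ (up to constants), with the same monotonicity in $n$.

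\emph{Case $(1,1)$.} Avoid the partial-fraction bound of Lemma~\ref{XY}(ii), since it involves $|X-Y|$, which may be tiny. Instead use the convolution argument from Case~3 of Lemma~\ref{c-d}. Write
\[
\frac{1}{(1-t)^2(1-Xt)(1-Yt)}=\frac{1}{(1-t)(1-Xt)}\cdot\frac{1}{(1-t)(1-Yt)}.
\]
The coefficients of the first factor are $\sum_{j\le k}X^j$, bounded by $\min\{k+1,2/|1-X|\}\ll 1/d_X$ for $k\le N$; similarly for the second factor with $d_Y$. Convolving gives a coefficient of size $\ll n/(d_Xd_Y)\le N/(d_Xd_Y)$, as required.

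One uniform reformulation also works for all cases: each factor $1/(1-Xt)$ paired with $1/(1-t)$ has partial-sum coefficients $\ll d_X^{-1}$, uniformly for $|X|\le1$. For $a=2$ one instead uses the factorization $\frac{1}{(1-t)^2(1-Xt)^2}=\bigl(\frac{1}{(1-t)(1-Xt)}\bigr)^2$ in the same way, again convolving two sequences bounded by $d_X^{-1}$ over $n\le N$ terms, which yields $N d_X^{-2}$. This is cleaner than Lemma~\ref{XY}(i), so I would use the factorization for $(2,0)$, $(0,2)$ and $(1,1)$ alike. For $(1,0)$ I would factor as $\frac{1}{(1-t)(1-Xt)}\cdot\frac{1}{1-t}$, giving $n\cdot d_X^{-1}$. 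For $(0,0)$ the coefficient is simply $n+1$.

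The only point needing care, and the main (mild) obstacle, is the uniform bound
\[
\Bigl|\sum_{j=0}^{k}X^j\Bigr|\ll \frac{1}{|1-X|+N^{-1}}\qquad (k\le N,\ |X|\le 1).
\]
This follows from $\bigl|\sum_{j\le k}X^j\bigr|\le\min\{k+1,\,2/|1-X|\}$ together with $k+1\le N+1$. Here $|1-X^{k+1}|\le 2$ since $|X|\le1$, and no Diophantine input is needed. After that, summing the coefficient bound over $0\le n\le N$ completes the proof.
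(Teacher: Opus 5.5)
Your proposal is correct and follows essentially the same route as the paper: you bound each coefficient by $\ll N d_X^{-a}d_Y^{-b}$ and sum over $0\le n\le N$, using \eqref{X} for $(1,0)$, Lemma~\ref{XY}(i) for $(2,0)$, and the convolution of partial sums $S_j(z_{h_1})S_{n-j}(z_{-h_2})$ for $(1,1)$. Your optional factorization $\bigl(\frac{1}{(1-t)(1-Xt)}\bigr)^2$ for $(2,0)$, and your explicit check that the coefficient bounds are monotone in $n\le N$, are harmless refinements of steps the paper leaves implicit.
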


\begin{proof}
If $(a,b)=(0,0)$
\[\sum_{n=0}^{N}\left|
[t^n]
\frac{1}
{(1-t)^2}
\right| \ll \sum_{n=0}^{N}  n \asymp N^{2}.\]

If $(a,b)=(1,0),$ (\ref{X}) gives
\[\sum_{n=0}^{N}\left|
[t^n]
\frac{1}
{(1-t)^2(1-z_{h_1}t)}
\right|
\ll \sum_{n=0}^{N}
\frac{N}{(|1-z_{h_1}|+N^{-1})}\asymp \frac{N^{2}}{(|1-z_{h_1}|+N^{-1})}.\]
By symmetry, the same bound holds for \((a,b) = (0,1)\), with \(z_{h_1}\) replaced by \(z_{-h_2}\).

If \((a,b) = (1,1)\), the geometric series expansions give
\[
\frac{1}{(1-t)^2} = \sum_{k \ge 0} (k+1) t^k,
\quad 
\frac{1}{1 - z_{h_1} t} = \sum_{k \ge 0} z_{h_1}^k t^k,
\quad 
\frac{1}{1 - z_{-h_2} t} = \sum_{k \ge 0} z_{-h_2}^k t^k.
\]
Multiplying the three power series and extracting the coefficient of \(t^n\),
\[
[t^n]
\frac{1}{(1-t)^2(1 - z_{h_1} t)(1 - z_{-h_2} t)}
=
\sum_{\substack{j, k, l \ge 0 \\ j + k + l = n}}
(j+1) \, z_{h_1}^k \, z_{-h_2}^l.
\]
Reorganizing the sum in terms of the partial sums
\[
S_j(z_{h_1}) := \sum_{k=0}^{j} z_{h_1}^k,
\quad 
S_j(z_{-h_2}) := \sum_{l=0}^{j} z_{-h_2}^l,
\]
we obtain
\[
[t^n]
\frac{1}{(1-t)^2(1 - z_{h_1} t)(1 - z_{-h_2} t)}
=
\sum_{j=0}^{n} S_j(z_{h_1}) S_{n-j}(z_{-h_2}).
\]
Since the partial sums satisfy
\[
|S_j(z_{h_1})| \ll \frac{1}{|1 - z_{h_1}| + N^{-1}},
\quad 
|S_j(z_{-h_2})| \ll \frac{1}{|1 - z_{-h_2}| + N^{-1}}
\]
for \(0 \le j \le N\), we have that 
\[
\left|
[t^n]
\frac{1}{(1-t)^2(1 - z_{h_1} t)(1 - z_{-h_2} t)}
\right|
\ll
\frac{n}{(|1 - z_{h_1}| + N^{-1})(|1 - z_{-h_2}| + N^{-1})}.
\]
Summing over \(n = 0, \dots, N\) and using \(\sum_{n=0}^{N}(n+1) \asymp N^2\), we conclude
\[
\sum_{n=0}^{N}
\left|
[t^n]
\frac{1}{(1-t)^2(1 - z_{h_1} t)(1 - z_{-h_2} t)}
\right|
\ll
\frac{N^2}{(|1 - z_{h_1}| + N^{-1})(|1 - z_{-h_2}| + N^{-1})}.
\]

If $(a,b)=(2,0),$ Lemma \ref{XY}(i) gives 
\[\sum_{n=0}^{N}
\left|
[t^n]
\frac{1}{(1-t)^2(1 - z_{h_1} t)^{2}}
\right|
\ll\frac{N^{2}}{(|1-z_{h_1}|+N^{-1})^{2}},\]
and, by symmetry, the same bound holds for \((a,b)=(0,2)\) with \(z_{h_1}\) replaced by \(z_{-h_2}\).  
\end{proof}

\begin{lemma}\label{cov-bad}
We have
\begin{multline*}
C_N(h_1, h_2)
\ll
\frac{N^2 |v_1v_2|}{v_1^{2} + v_2^{2} + N^{-1}}
\biggl[
\frac{1}{(|1 - z_{h_1}| + N^{-1})^2}
+
\frac{1}{(|1 - z_{-h_2}| + N^{-1})^2}
\\
+
\frac{1}{(|1 - z_{h_1}| + N^{-1})(|1 - z_{-h_2}| + N^{-1})}
+
\frac{1}{|1 - z_{h_1}| + N^{-1}}
+
\frac{1}{|1 - z_{-h_2}| + N^{-1}}
\biggr].
\end{multline*}
\end{lemma}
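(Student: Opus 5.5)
We will start from the generating-function identity of Lemma~\ref{c2} and rewrite the difference of the two geometric factors as a single power series. Set
\[
\Delta(t):=\frac{1}{1-z_{h_1-h_2}t}-\frac{1}{1-z_{h_1}z_{-h_2}t}=\sum_{n\ge1}\bigl(z_{h_1-h_2}^n-(z_{h_1}z_{-h_2})^n\bigr)t^n,
\]
whose coefficients $d_n$ (with $d_0=0$) satisfy, by Lemma~\ref{lco-1},
\[
|d_n|\ll \frac{|v_1v_2|}{v_1^2+v_2^2+N^{-1}}\qquad (1\le n\le N),
\]
where the $s_1+s_2$ appearing in that lemma is read as $v_1^2+v_2^2$, as in its proof. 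Next expand the square
\[
\Bigl(\frac{1}{1-z_{h_1}t}+\frac{1}{1-z_{-h_2}t}-1\Bigr)^2
\]
into a finite linear combination, with bounded integer coefficients, of the six products $(1-z_{h_1}t)^{-a}(1-z_{-h_2}t)^{-b}$ with $(a,b)\in\mathscr H$. This expresses $C_N(h_1,h_2)$ as a bounded combination of coefficients of $t^N$ in $\Delta(t)\,G_{a,b}(t)$, where
\[
G_{a,b}(t):=\frac{1}{(1-t)^2(1-z_{h_1}t)^a(1-z_{-h_2}t)^b}.
\]

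For each $(a,b)$ apply the convolution rule:
\[
[t^N]\Delta(t)G_{a,b}(t)=\sum_{n=1}^{N}d_n\,[t^{N-n}]G_{a,b}(t),
\]
so that
\[
\bigl|[t^N]\Delta G_{a,b}\bigr|\le \max_{1\le n\le N}|d_n|\cdot\sum_{m=0}^{N}\bigl|[t^m]G_{a,b}(t)\bigr|.
\]
The first factor is controlled by Lemma~\ref{lco-1}, uniformly in $n\le N$. The second is exactly the quantity bounded in Lemma~\ref{bad-ab}, namely
\[
\ll \frac{N^2}{(|1-z_{h_1}|+N^{-1})^{a}(|1-z_{-h_2}|+N^{-1})^{b}}.
\]
Multiplying the two bounds and running over the six pairs $(a,b)$ gives six terms. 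The $(0,0)$ term is $N^2|v_1v_2|/(v_1^2+v_2^2+N^{-1})$. Since $|1-z|+N^{-1}\le 3$, this term is dominated by either of the $(1,0)$ or $(0,1)$ terms, so it is absorbed into the bracket. The remaining five pairs produce exactly the five terms listed in the statement.

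The only point needing care is the uniformity of the Lemma~\ref{lco-1} bound over all $n\le N$, including small $n$. This is what lets the $\max_n|d_n|$ factor out cleanly. It holds because the bound in Lemma~\ref{lco-1} was derived with $\min\{(1-|z_{h_1}z_{-h_2}|)^{-1},n\}\le \min\{\cdot,N\}$, so it is uniform in $n\le N$. The $t$-shift is also harmless: the index of $G_{a,b}$ runs over $0\le N-n\le N-1$, which is inside the range summed in Lemma~\ref{bad-ab}. I expect no real obstacle beyond checking that the local coordinates \eqref{lco} are valid, which Lemma~\ref{4b} supplies in Subpart 2-2.
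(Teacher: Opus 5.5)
Your proof is correct and follows the paper's argument. The paper likewise bounds each $(a,b)$-contribution by $\sup_{1\le n\le N}|d_n|$ (Lemma~\ref{lco-1}) times $\sum_{n\le N}|[t^n]G_{a,b}(t)|$ (Lemma~\ref{bad-ab}). Two points you make explicit are left implicit in the paper's short proof: working with the single difference series $\Delta(t)$ rather than two separate $\varpi$-terms, and absorbing the $(0,0)$ term into the bracket; the paper also misprints $v_1^2+v_2^2$ as $v_1^2v_2^2$ in its final display.
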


\begin{proof}
For $(a,b)\in\mathscr{H}$, by Lemmas \ref{lco-1} and \ref{bad-ab}, we have
\begin{align*}
C_N(h_1, h_2; a, b)
~\le~&
\sup_{1 \le n \le N}
\left|
[t^n]
\left(
\frac{1}{1 - z_{h_1-h_2} t}
-
\frac{1}{1 - z_{h_1} z_{-h_2} t}
\right)
\right|
\\
&\quad 
\cdot
\sum_{n=0}^{N}
\left|
[t^n]
\frac{1}{(1-t)^2(1 - z_{h_1} t)^a (1 - z_{-h_2} t)^b}
\right|
\\
\ll~&
\frac{N^2 |v_1v_2|}{(v_1^{2} v_2^{2}+ N^{-1})(|1 - z_{h_1}| + N^{-1})^a (|1 - z_{-h_2}| + N^{-1})^b}.
\end{align*}
\end{proof}

We are now in a position to estimate the contribution of Subpart 2-2 to the off-diagonal variance:
\[
\operatorname{Var}^{\mathrm{off},2-2}_{\mathcal{B}}(R_N(f))
=
\frac{1}{N^4}
\sideset{}{^*}\sum_{(h_1,h_2)\in\mathcal{B}_N}
\widehat{f}\!\left(\frac{h_1}{N}\right)
\widehat{f}\!\left(\frac{h_2}{N}\right)
C_N(h_1,h_2),
\] 
 here and below, a starred summation indicates that the pair \((h_1,h_2)\) is required to satisfy the additional restrictions of Subpart 2-2. 

First, with Lemma~\ref{cov-bad} at our disposal, we use the coarse bound 
\[
C_N(h_1, h_2) \ll N^4.
\]
Together with the Schwartz decay of \(\widehat f\), this implies that the variance contribution from pairs satisfying
\[
|h_1| > N^{1+\varepsilon}
\quad \text{or}\quad 
|h_2| > N^{1+\varepsilon}
\]
is \(O_{f}(N^{-1})\).   
It remains to estimate
\[
\widetilde{\operatorname{Var}}^{\mathrm{off},2-2}_{\mathcal B}(R_N(f))
=
\frac{1}{N^4}
\sideset{}{^*}\sum_{\substack{(h_1, h_2) \in \mathcal B_N\\
1\le |h_1|,|h_2|\le N^{1+\varepsilon}}}
\widehat f\!\left(\frac{h_1}{N}\right)
\widehat f\!\left(\frac{h_2}{N}\right)
C_N(h_1, h_2).
\]

For \(k, l \in \mathbb N\) and \(i \in \{1, 2\}\), define
\begin{align*}
\Omega_{k,l}^{(i)}
&:=
\bigl\{
h_i \in \mathbb Z \setminus \{0\} :
1 \le |h_i| \le N^{1+\varepsilon},\
|1 - z_{h_i}| < N^{-r},
\\
&\quad \quad \quad \quad 
2^{-(l+1)} \le \rho_{h_i}(\boldsymbol{\theta}) < 2^{-l},\
2^{-(k+1)} \le v_i^2 \le 2^{-k}
\bigr\}.
\end{align*}

The following lemma provides upper bounds for the cardinalities of the these sets.

\begin{lemma}
\label{Omega-b}
For  \(k, l \in \mathbb N\) and \(i \in \{1, 2\}\),
\[
\#  \Omega_{k,l}^{(i)}
\ll
\min\left\{
1 + \frac{N^{1+\varepsilon}}{2^{l/\lambda}},
\left(1 + \frac{2^{-l}}{N^{-\lambda(1+\varepsilon)}}\right)
\left(1 + \frac{2^{-k/2}}{N^{-\lambda(1+\varepsilon)/2}}\right)
\right\}.
\]
\end{lemma}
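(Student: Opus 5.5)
The two bounds in the minimum come from two different mechanisms, and I would prove them separately. Throughout, $\lambda=2/3+\varepsilon$ and we use the $\lambda$-Diophantine lower bound $\rho_h(\boldsymbol{\theta})\gg|h|^{-\lambda}$.

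\emph{First bound (separation in $h$).} This is the argument already used for $\mathcal B_{j,k}$ in Lemma~\ref{diag}. Take distinct $h,h'\in\Omega_{k,l}^{(i)}$. Both $h\boldsymbol{\theta}$ and $h'\boldsymbol{\theta}$ lie within anisotropic distance $2^{-l}$ of $\mathcal K$. Since $\mathcal K-\mathcal K=\mathcal K$ and the quasi-metric $|x|+y^2$ satisfies a triangle inequality up to a constant, we get $\rho_{h-h'}(\boldsymbol{\theta})\ll 2^{-l}$. The Diophantine condition then forces $|h-h'|^{-\lambda}\ll 2^{-l}$, that is, $|h-h'|\gg 2^{l/\lambda}$. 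The set lies in $[-N^{1+\varepsilon},N^{1+\varepsilon}]$, so it contains $\ll 1+N^{1+\varepsilon}/2^{l/\lambda}$ points.

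\emph{Second bound (a packing argument in $\mathbb T^2$).} Here the set $\{h\boldsymbol{\theta}\}$ is viewed as a well-separated point set in the torus and counted inside a box.
\begin{itemize}
\item[(a)] The points lie in a box. For $h\in\Omega_{k,l}^{(i)}$, write $h\boldsymbol{\theta}=\mathbf{k}+(u,v/2)$ with $\mathbf k\in\mathcal K$, as in \eqref{lco}. The condition $\rho_h<2^{-l}$ gives $|u|\ll 2^{-l}$. The condition $v^2\asymp 2^{-k}$ places $v$ in one of two intervals $\pm[2^{-(k+1)/2},2^{-k/2}]$, each of length $\asymp 2^{-k/2}$. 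So, up to the four choices of $\mathbf k$ and sign, all $h\boldsymbol{\theta}$ lie in a box of size $\ll 2^{-l}\times 2^{-k/2}$.
\item[(b)] The points are separated. For distinct $h,h'$ with $|h|,|h'|\le N^{1+\varepsilon}$ we have $|h-h'|\le 2N^{1+\varepsilon}$. The Diophantine condition gives $\rho_{h-h'}\gg N^{-\lambda(1+\varepsilon)}=:\tau$. So $h\boldsymbol{\theta}-h'\boldsymbol{\theta}$ stays at anisotropic distance $\gg\tau$ from $\mathcal K$. If both points lie in the same anisotropic box of size $c\tau\times c\tau^{1/2}$ with $c$ small, their difference would be within $\ll c\tau$ of $\mathcal K$, which is a contradiction.
\item[(c)] Counting. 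Cover the box from (a) by $\ll(1+2^{-l}/\tau)(1+2^{-k/2}/\tau^{1/2})$ anisotropic boxes of size $c\tau\times c\tau^{1/2}$. By (b), each small box contains at most one point. This gives exactly the second bound.
\end{itemize}

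\emph{Expected obstacle: step (b).} The anisotropic quantity $\rho$ must satisfy a quasi-triangle inequality compatible with the coset structure of $\mathcal K$. In particular:
\begin{itemize}
\item When $h$ and $h'$ are near different points of $\mathcal K$, their difference must be near $\mathbf k_2$. This uses $\mathcal K-\mathcal K=\mathcal K$, as in Lemma~\ref{df}.
\item The squared $y$-coordinate obeys $(y-y')^2\le 2(y^2+y'^2)$.
\end{itemize}
Both are elementary, but the bookkeeping of the two centers (and the $v/2$ normalization, with $|1-z_h|<N^{-r}$ guaranteeing we are in the local chart of Theorem~\ref{thm:local}) must be done carefully. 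Finally, the statement's bound is the minimum of the two estimates, since both hold simultaneously.
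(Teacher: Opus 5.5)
Your proposal is correct and follows the paper's proof. The first bound comes from the separation $|h-h'|\gg 2^{l/\lambda}$, which follows from $\rho_{h-h'}\ll\rho_h+\rho_{h'}$. The second comes from packing the points $h\boldsymbol{\theta}$, in the local coordinates $(u,v)$, into anisotropic cells of size $\asymp N^{-\lambda(1+\varepsilon)}\times N^{-\lambda(1+\varepsilon)/2}$.

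Your step (b) is in fact slightly more careful than the paper's. The paper derives its contradiction from $\rho_{p-q}\ge c\,N^{-\lambda}$, even though $|p-q|$ can be as large as $2N^{1+\varepsilon}$. Your argument shrinks the cells by a small constant $c$ and uses the correct bound $\rho_{h-h'}\gg N^{-\lambda(1+\varepsilon)}$; this is what makes the contradiction valid, and it changes the count only by a constant factor.
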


\begin{proof}

For \(p, q \in \Omega_{k,l}^{(i)}\) with \(p \neq q\), the definition of \(\rho_h(\boldsymbol{\theta})\) yields
\[
|p - q|^{-\lambda}
\ll \rho_{p - q}(\boldsymbol{\theta})
\ll \rho_p(\boldsymbol{\theta}) + \rho_q(\boldsymbol{\theta})
< 2^{-(l-1)}.
\]
Hence
\[
|p - q| \gg 2^{l/\lambda}.
\]
It follows that
\[
\#  \Omega_{k,l}^{(i)}
\ll 1 + \frac{N^{1+\varepsilon}}{2^{l/\lambda}}.
\]

Further, by Theorem~\ref{thm:local} and \eqref{loc-asy}, we have
\[
\rho_{h_i}(\boldsymbol{\theta})
\asymp |1 - z_{h_i}|
\asymp |u_i| + |v_i|^2.
\]
This implies that for \(h_i \in \Omega_{k,l}^{(i)}\),
\[
0 \le |u_i| \ll 2^{-l},
\quad 
|v_i| \asymp 2^{-k/2}.
\]
Put \(d_N := N^{-\lambda(1+\varepsilon)}\), and partition the region into half-open cells of side lengths \(d_N\) and \(d_N^{1/2}\) in the \(u\)- and \(v\)-directions, respectively. The number of such cells is at most
\[
\ll
\left(1 + \frac{2^{-l}}{N^{-\lambda(1+\varepsilon)}}\right)
\left(1 + \frac{2^{-k/2}}{N^{-\lambda(1+\varepsilon)/2}}\right).
\]

We now show that each cell contains at most one point from \(  \Omega_{k,l}^{(i)}\). Suppose for contradiction that two distinct integers \(p, q \in \Omega_{k,l}^{(i)}\) occupy the same cell. Then
\[
|u(p) - u(q)| < d_N,
\quad 
|v(p) - v(q)| < d_N^{1/2}.
\]
By the local cusp comparison (Theorem~\ref{thm:local}),
\[
\rho_{p - q}(\boldsymbol{\theta})
\asymp |u(p) - u(q)| + |v(p) - v(q)|^2
\le C d_N
\]
for some absolute constant \(C > 0\). Since \(d_N = N^{-\lambda(1+\varepsilon)}\), for \(N\) sufficiently large we have
\[
C d_N < c_{\boldsymbol{\theta},\varepsilon} N^{-\lambda},
\]
and hence
\[
\rho_{p - q}(\boldsymbol{\theta}) < c_{\boldsymbol{\theta},\varepsilon} N^{-\lambda}.
\]
On the other hand, by definition of \(\rho_{p-q}\),
\[
\rho_{p - q}(\boldsymbol{\theta}) \ge c_{\boldsymbol{\theta},\varepsilon} N^{-\lambda},
\]
a contradiction.

\end{proof}

For \(h_i \in \Omega_{k,l}^{(i)}\), the defining conditions give
\[
|1 - z_{h_i}| < N^{-r},
\quad 
2^{-(l+1)} \le \rho_{h_i}(\boldsymbol{\theta}) < 2^{-l},
\quad 
2^{-(k+1)} \le v_i^2 \le 2^{-k}.
\]
By the local comparison (Theorem~\ref{thm:local}),
\[
|1 - z_{h_i}| \asymp \rho_{h_i}(\boldsymbol{\theta})  \gg N^{-\lambda}.
\]
Combining this with \(\rho_{h_i}(\boldsymbol{\theta}) < 2^{-l}\) and \(|1 - z_{h_i}| < N^{-r}\), we obtain
\[
N^{-\lambda} \ll 2^{-l} \ll N^{-r}.
\]

Similarly, since \(\rho_{h_i}(\boldsymbol{\theta}) \asymp |u_i| + v_i^2 \ge v_i^2\), we have
\[
v_i^2 \ll 2^{-l_i},
\]
and therefore
\[
2^{-k}\ll 2^{-l_i}\ll N^{-r}.
\]

We now estimate
\[
S_N := \frac{1}{N^2}
\sum_{\substack{(h_1, h_2) \in \mathcal B_N \\ 1 \le |h_1|, |h_2| \le N^{1+\varepsilon}}}
\widehat f\!\left(\frac{h_1}{N}\right)
\widehat f\!\left(\frac{h_2}{N}\right)
\frac{ |v_1 v_2|}{v_1^2 + v_2^2 + N^{-1}}
\frac{1}{(|1 - z_{h_1}| + N^{-1})^2}.
\]

\begin{lemma}\label{var:2-2}
We have 
\[S_N\ll N^{-\min\{{r}/{2}-8\varepsilon, \, {2}/{3}-3\varepsilon\}}.\]
\end{lemma}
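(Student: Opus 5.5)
We bound $S_N$ by dyadic decomposition, using the cardinality bounds of Lemma~\ref{Omega-b} for both indices and the local coordinates \eqref{lco}. Take absolute values and drop $\widehat f$, which is $\ll1$ on $|h_i|\le N^{1+\varepsilon}$. In Subpart~2-2, Lemma~\ref{4b} shows that both $h_1$ and $h_2$ have $|1-z_{h_i}|\ll N^{-r}$. Hence, up to harmless constants, $h_i\in\Omega^{(i)}_{k_i,l_i}$ for dyadic indices satisfying
\[
N^{-\lambda}\ll 2^{-l_i}\ll N^{-r},\qquad 2^{-k_i}\ll 2^{-l_i},
\]
with $k_i\le \lambda(1+\varepsilon)\log_2 N+O(1)$ from the Diophantine lower bound. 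On such a block,
\[
|v_i|\asymp 2^{-k_i/2},\qquad |1-z_{h_1}|+N^{-1}\asymp 2^{-l_1}.
\]
The summand is therefore essentially constant, namely
\[
\frac{2^{-(k_1+k_2)/2}}{2^{-k_1}+2^{-k_2}+N^{-1}}\,2^{2l_1}.
\]
This gives
\[
S_N\ll \frac1{N^2}\sum_{k_1,l_1,k_2,l_2}\#\Omega^{(1)}_{k_1,l_1}\,\#\Omega^{(2)}_{k_2,l_2}\,
\frac{2^{-(k_1+k_2)/2}}{2^{-k_1}+2^{-k_2}+N^{-1}}\,2^{2l_1}.
\]
The number of dyadic indices is $O((\log N)^4)$, which is absorbed into $N^{\varepsilon}$-losses.

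Next we estimate the two factors separately. For the $h_2$-factor we need no gain from $l_2$. We use the first bound of Lemma~\ref{Omega-b}, and sum over $l_2$ only the ratio $2^{-k_2/2}/(\cdots)$. By AM--GM,
\[
\frac{2^{-(k_1+k_2)/2}}{2^{-k_1}+2^{-k_2}}\le \min\Bigl\{1,\;2^{-|k_1-k_2|/2}\Bigr\}.
\]
This decays geometrically in $|k_1-k_2|$, so the $k_2$-sum is effectively pinned at $k_2\approx k_1$, at the cost of a logarithm. The total count of admissible $h_2$ with $|1-z_{h_2}|\ll N^{-r}$ is
\[
\sum_{l_2}\Bigl(1+N^{1+\varepsilon}2^{-l_2/\lambda}\Bigr)\ll N^{1+\varepsilon-r/\lambda}.
\]
For the $h_1$-factor, the weight $2^{2l_1}$ is large, so we use the second bound of Lemma~\ref{Omega-b},
\[
\#\Omega^{(1)}_{k_1,l_1}\ll \Bigl(1+2^{-l_1}N^{\lambda(1+\varepsilon)}\Bigr)\Bigl(1+2^{-k_1/2}N^{\lambda(1+\varepsilon)/2}\Bigr).
\]
Taking the minimum with the first bound $1+N^{1+\varepsilon}2^{-l_1/\lambda}$ and multiplying by $2^{2l_1}$ gives at worst $\ll N^{2\lambda(1+\varepsilon)}$ per block, as in Lemma~\ref{diag}.

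The main obstacle is combining these two estimates without losing too much. The crude product gives
\[
N^{-2}\cdot N^{2\lambda(1+\varepsilon)}\cdot N^{1+\varepsilon-r/\lambda},
\]
which is not small. The factor $|v_1v_2|/(v_1^2+v_2^2+N^{-1})$ must therefore be exploited more carefully. We plan to split into two regimes.

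\emph{Regime (a): $\min(2^{-k_1},2^{-k_2})\le N^{-1}$.} Here the ratio is
\[
\ll 2^{-(k_1+k_2)/2}N \le N^{1/2}\,2^{-\max(k_1,k_2)/2}\ll N^{1/2-r/2},
\]
and the smallness comes from $v_i^2\ll N^{-r}$.

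\emph{Regime (b): $v_1^2,v_2^2> N^{-1}$.} Here $k_i<\log_2N$. The second factor of Lemma~\ref{Omega-b} for $h_2$ is then small, so the count of $h_2$ is much smaller than the crude count above, and the interplay of $2^{2l_1}$ with the $v$-cell count yields the exponent $2/3-3\varepsilon$. This uses $\lambda=2/3+\varepsilon$, via $-2+3\lambda/2+\cdots$ type exponents.

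Balancing the two regimes should produce the stated bound $N^{-\min\{r/2-8\varepsilon,\,2/3-3\varepsilon\}}$. The $8\varepsilon$ collects the losses from the logarithmic counts and from the factors $(1+\varepsilon)$ in the exponents. We expect the bookkeeping in regime~(b), namely matching the cell counts for $h_1$ and $h_2$ against $2^{2l_1}$, to be the delicate point. If the exponents do not close, our fallback is to sum $h_2$ first for fixed $h_1$, treating $v_1$ as given and applying a separation argument in the spirit of Lemma~\ref{df} to the $v_2$-coordinate alone.
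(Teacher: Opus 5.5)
\paragraph{There is a genuine gap.} Your reduction to
\[
S_N\ll\frac1{N^2}\sum_{k_1,l_1,k_2,l_2}\#\Omega^{(1)}_{k_1,l_1}\,\#\Omega^{(2)}_{k_2,l_2}\,R(k_1,k_2)\,2^{2l_1},\qquad R(k_1,k_2):=\frac{2^{-(k_1+k_2)/2}}{2^{-k_1}+2^{-k_2}+N^{-1}},
\]
is exactly the paper's starting point. After that, as you observe yourself, the argument does not close, and the regime split does not repair it. The decisive mistake is using the separation bound of Lemma~\ref{Omega-b} for $h_2$. That bound discards all information on $(u_2,v_2)$. So nothing excludes the worst case $|v_2|\approx|v_1|$, $R\asymp 1$, and you are stuck at $N^{-2}\cdot N^{2\lambda}\cdot N^{1-3r/2}\approx N^{1/3-3r/2}$.

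Your regimes add no new input:
\begin{itemize}
\item In regime (a) you bound $R$ by $N^{1/2-r/2}$, which is worse than the trivial bound $R\le 1/2$.
\item In regime (b) the $h_2$-cell count is not smaller in the worst case. For $2^{-l_2}\approx N^{-r}$ and $2^{-k_2}\approx N^{-r}$, the product $(1+2^{-l_2}N^{\lambda(1+\varepsilon)})(1+2^{-k_2/2}N^{\lambda(1+\varepsilon)/2})$ reaches $N^{3(\lambda-r)/2+O(\varepsilon)}\approx N^{1-3r/2}$, the same as the crude count. You give no mechanism linking it to $2^{2l_1}$.
\item The fallback of separating in the $v_2$-coordinate alone fails. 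Close $v_2$'s with $|u_2|\ll 2^{-l_2}$ only give $\rho_{h_2-h_2'}\ll 2^{-l_2}$, which reproduces the separation bound.
\end{itemize}

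\paragraph{The missing idea.} Use the cell bound for $h_2$, and let the coupling factor $R$ transfer the size of $v_2$ to $v_1$. For fixed $k_1$,
\[
\sum_{k_2}2^{-k_2/2}R(k_1,k_2)\ll 2^{-k_1/2}\log N .
\]
Since $v_1^2\le\rho_{h_1}$, i.e.\ $2^{-k_1}\ll 2^{-l_1}$, this lowers the singular weight $2^{2l_1}$ to $2^{3l_1/2}$. The $u_2$-cells give $\sum_{l_2}2^{-l_2}N^{\lambda(1+\varepsilon)}\ll N^{\lambda(1+\varepsilon)-r}$. The worst term then becomes
\[
N^{-2}\cdot N^{3\lambda/2}\cdot N^{-r}\cdot N^{3\lambda/2}\cdot N^{O(\varepsilon)}=N^{-r+O(\varepsilon)}.
\]
This is exactly where $3\lambda=2+3\varepsilon$ is used. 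For $h_1$ the separation bound (or $O(1)$ per block) suffices, giving $\sum_{l_1}(1+N^{1+\varepsilon}2^{-l_1/\lambda})2^{2l_1}\ll N^{2\lambda+\varepsilon}$. That produces the exponent $2/3-3\varepsilon$ in the terms where the $h_2$-cell count is $O(1)$.

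\paragraph{A smaller error.} Your claim $k_i\le\lambda(1+\varepsilon)\log_2N+O(1)$ is wrong. The Diophantine bound controls $|u_i|+v_i^2$, not $v_i$, so the $k_i$ are unbounded. The blocks with $v_i^2<N^{-1}$ must be summed using the geometric decay of $R$ (as in Cases 2--3 of the paper's Sublemma~1), not by counting $O((\log N)^4)$ blocks.
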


\begin{proof}
We deduce that
\begin{align*}
S_N
&\le
\frac{1}{N^2}
\sum_{\substack{(h_1, h_2) \in \mathcal B_N \\ 1 \le |h_1|, |h_2| \le N^{1+\varepsilon}}}
\frac{|v_1 v_2|}{v_1^2 + v_2^2 + N^{-1}}
\frac{1}{|1 - z_{h_1}|^2}
\\
&\ll
\frac{1}{N^2}
\sum_{\substack{(h_1, h_2) \in \mathcal B_N \\ 1 \le |h_1|, |h_2| \le N^{1+\varepsilon}}}
\frac{|v_1 v_2|}{v_1^2 + v_2^2 + N^{-1}}
\frac{1}{\rho_{h_1}(\boldsymbol{\theta})^2},
\end{align*}
where the second step uses \(\rho_{h_1}(\boldsymbol{\theta}) \asymp |1 - z_{h_1}|\) (Theorem~\ref{thm:local}).

We now decompose the sum into dyadic layers according to
\[
\rho_{h_i}(\boldsymbol{\theta}) \asymp 2^{-l_i},
\quad 
v_i^2 \asymp 2^{-k_i},
\quad i = 1, 2,
\]
with the ranges
\[
N^{-\lambda} \ll 2^{-l_i} \ll N^{-r},
\quad 
2^{-k_i} \ll 2^{-l_i}.
\]
Thus
\begin{align*}
S_N
&\ll
\frac{1}{N^2}
\sum_{\substack{l_1, l_2 \\ N^{-\lambda} \ll 2^{-l_i} \ll N^{-r}}}
\sum_{\substack{k_1, k_2 \\ 2^{-k_i} \ll 2^{-l_i}}}
\#  \Omega_{k_1, l_1}^{(1)}
\cdot
\#  \Omega_{k_2, l_2}^{(2)}
\cdot
\frac{(2^{-k_1} 2^{-k_2})^{1/2}}{2^{-k_1} + 2^{-k_2} + N^{-1}}
\cdot
2^{2 l_1}.
\end{align*}

By Lemma~\ref{Omega-b}, the cardinalities satisfy
\[
\#  \Omega_{k_i, l_i}^{(i)}
\ll
\min\left\{
1 + \frac{N^{1+\varepsilon}}{2^{l_i/\lambda}},
\left(1 + \frac{2^{-l_i}}{N^{-\lambda(1+\varepsilon)}}\right)
\left(1 + \frac{2^{-k_i/2}}{N^{-\lambda(1+\varepsilon)/2}}\right)
\right\}.
\]
Substituting these bounds into the above expression yields
\begin{align*}
S_N
&\ll
\frac{1}{N^2}
\sum_{\substack{l_1, l_2 \\ N^{-\lambda} \ll 2^{-l_i} \ll N^{-r}}}
\sum_{\substack{k_1, k_2 \\ 2^{-k_i} \ll N^{-r}}}
\left(1 + \frac{N^{1+\varepsilon}}{2^{l_1/\lambda}}\right)
\\
&\quad \quad 
\times
\left(1 + \frac{2^{-l_2}}{N^{-\lambda(1+\varepsilon)}}\right)
\left(1 + \frac{2^{-k_2/2}}{N^{-\lambda(1+\varepsilon)/2}}\right)
\cdot
\frac{(2^{-k_1} 2^{-k_2})^{1/2}}{2^{-k_1} + 2^{-k_2} + N^{-1}}
\cdot
2^{2 l_1}.
\end{align*}
Expanding the product of the three brackets, we obtain a sum of \(2\times2\times2=8\) terms. We estimate each term separately. We list the eight terms in increasing binary order, where the first bracket corresponds to the least significant bit.
\medskip

\noindent \textbf{Term 1.} We now estimate the first term \(S_{N,1}\): 

\begin{align*}
S_{N,1}
&:=
\frac{1}{N^2}
\sum_{\substack{l_1, l_2 \\ N^{-\lambda} \ll 2^{-l_i} \ll N^{-r}}}
\sum_{\substack{k_1, k_2 \\ 2^{-k_i} \ll 2^{-l_i}}}
\frac{(2^{-k_1} 2^{-k_2})^{1/2}}{2^{-k_1} + 2^{-k_2} + N^{-1}}
\cdot
2^{2 l_1}.
\end{align*}

We first sum over \(l_1\) and \(l_2\). Since the range of each \(l_i\) has length \(\asymp \log N\) and the summand depends only on \(l_1\), we have
\[
\sum_{l_1} 2^{2 l_1} \asymp N^{2\lambda}.
\]
 The sum over \(l_2\) contributes a factor \(\asymp \log N\). Hence
\[
\sum_{\substack{l_1, l_2 \\ N^{-\lambda} \ll 2^{-l_i} \ll N^{-r}}} 2^{2 l_1} \asymp N^{2\lambda} \log N.
\]
Substituting this into \(S_{N,1}\) and extracting the factor \(N^{-2}\), we obtain
\begin{align*}
S_{N,1}
&\asymp
\frac{\log N}{N^{2-2\lambda}}
\sum_{\substack{k_1, k_2 \\ 2^{-k_i} \ll 2^{-l_i}}}
\frac{(2^{-k_1} 2^{-k_2})^{1/2}}{2^{-k_1} + 2^{-k_2} + N^{-1}}.
\end{align*}
It remains to estimate the sum over \(k_1, k_2\), which we shall do below.

\medskip
\noindent\textbf{Sublemma 1.}
\[
\sum_{\substack{k_1, k_2 \\ 2^{-k_i} \ll 2^{-l_i}}}
\frac{(2^{-k_1} 2^{-k_2})^{1/2}}{2^{-k_1} + 2^{-k_2} + N^{-1}}
\ll (\log N)^{2}.
\]

\begin{proof}
We split the sum into three cases according to whether both, exactly one, or neither of \(2^{-k_1}, 2^{-k_2}\) exceeds \(N^{-1}\).

\medskip
\noindent{Case 1:} \(2^{-k_1} > N^{-1}\) and \(2^{-k_2} > N^{-1}\).

In this case, the elementary inequality
\(
(ab)^{1/2} \le a + b
\)
gives
\[
\sum_{\substack{k_1, k_2 \\ N^{-1}<2^{-k_i} \ll 2^{-l_i} }}
\frac{(2^{-k_1} 2^{-k_2})^{1/2}}{2^{-k_1} + 2^{-k_2} + N^{-1}}
\ll
\sum_{k_1 \asymp \log N} \sum_{k_2 \asymp \log N} 1
\asymp (\log N)^{2},
\]
where \(k_2 \asymp \log N\) since
\(N^{-1} < 2^{-k_i} \ll 2^{-l_i} \ll N^{-r}.\)

\medskip
\noindent{Case 2:} exactly one of \(2^{-k_1}, 2^{-k_2}\) is \(> N^{-1}\).

Without loss of generality, assume \(2^{-k_1} > N^{-1}\).
\[\sum_{k_1: N^{-1}<2^{-k_1}\ll 2^{-l_1}}~ \sum_{k_2: 2^{-k_2}<N^{-1}} \frac{(2^{-k_1} 2^{-k_2})^{1/2}}{2^{-k_1} + 2^{-k_2} + N^{-1}} \le \sum_{k_1: N^{-1}<2^{-k_1}\ll N^{-r}}~ \sum_{k_2: 2^{-k_2}<N^{-1}}  2^{k_1/2}2^{-k_2/2}\asymp 1.\]

\medskip
\noindent{Case 3:} \(2^{-k_1}  \le  N^{-1}\) and \(2^{-k_2} \le  N^{-1}\).
\[\sum_{\substack{k_1, k_2 \\ 2^{-k_i} \le N^{-1} }}\frac{(2^{-k_1} 2^{-k_2})^{1/2}}{2^{-k_1} + 2^{-k_2} + N^{-1}} \le N \sum_{\substack{k_1, k_2 \\ 2^{-k_i} \le N^{-1} }} (2^{-k_1} 2^{-k_2})^{1/2} \asymp 1. \]
\end{proof}

Thus, by  {Sublemma 1}, 
\[S_{N,1}\ll \frac{(\log N)^{3}}{N^{2-2\lambda}}.\]

\noindent \textbf{Term 2.} We estimate  
\begin{align*}
 S_{N,2}:=\frac{1}{N^2}
\sum_{\substack{l_1, l_2 \\ N^{-\lambda} \ll 2^{-l_i} \ll N^{-r}}}
\sum_{\substack{k_1, k_2 \\ 2^{-k_i} \ll 2^{-l_{i}}}}
\frac{N^{1+\varepsilon}}{2^{l_1/\lambda}}\cdot\frac{(2^{-k_1} 2^{-k_2})^{1/2}}{2^{-k_1} + 2^{-k_2} + N^{-1}}
\cdot
2^{2 l_1}.
\end{align*}
Since \(\lambda = 2/3 + \varepsilon > 1/2\), we have \(2 - 1/\lambda > 0\), and
\[
\sum_{l_1: N^{-\lambda} \ll 2^{-l_1} \ll N^{-r}} N^{1+\varepsilon} 2^{l_1(2 - 1/\lambda)}
\asymp
N^{1+\varepsilon} \cdot N^{2\lambda - 1}
=
N^{2\lambda + \varepsilon}.
\]
The sum over \(l_2\) contributes a factor \(\asymp \log N\), and by  {Sublemma 1},
\[
S_{N,2}
\ll
\frac{1}{N^2}
\cdot
N^{2\lambda + \varepsilon}
\cdot
\log N
\cdot
(\log N)^{2}
=
\frac{(\log N)^3}{N^{2 - 2\lambda - \varepsilon}}.
\]

\noindent \textbf{Term 3.} By {Sublemma 1}, we have 
\begin{align*}
 S_{N,3}&:=\frac{1}{N^2}
\sum_{\substack{l_1, l_2 \\ N^{-\lambda} \ll 2^{-l_i} \ll N^{-r}}}
\sum_{\substack{k_1, k_2 \\ 2^{-k_i} \ll 2^{-l_{i}}}}
\frac{2^{-l_2}}{N^{-\lambda(1+\varepsilon)}}\cdot\frac{(2^{-k_1} 2^{-k_2})^{1/2}}{2^{-k_1} + 2^{-k_2} + N^{-1}}
\cdot
2^{2 l_1}\\
&\ll \frac{(\log N)^{2}}{N^{2+r-3\lambda-\lambda\varepsilon}}.
\end{align*}

\noindent \textbf{Term 4.} By {Sublemma 1}, we have
\begin{align*}
 S_{N,4}&:=\frac{1}{N^2}
\sum_{\substack{l_1, l_2 \\ N^{-\lambda} \ll 2^{-l_i} \ll N^{-r}}}
\sum_{\substack{k_1, k_2 \\ 2^{-k_i} \ll 2^{-l_{i}}}}
\frac{2^{-k_2/2}}{N^{-\lambda(1+\varepsilon)/2}}\cdot\frac{(2^{-k_1} 2^{-k_2})^{1/2}}{2^{-k_1} + 2^{-k_2} + N^{-1}}
\cdot
2^{2 l_1}\\
&\ll \frac{(\log N)^{3}}{N^{2+\frac{r}{2}-\frac{5\lambda}{2}-\frac{\lambda\varepsilon}{2}}}.
\end{align*}

\noindent \textbf{Term 5.}  {Sublemma 1} yields 
\begin{align*}
 S_{N,5}&:=\frac{1}{N^2}
\sum_{\substack{l_1, l_2 \\ N^{-\lambda} \ll 2^{-l_i} \ll N^{-r}}}
\sum_{\substack{k_1, k_2 \\ 2^{-k_i} \ll 2^{-l_{i}}}}
\frac{N^{(1+\lambda)(1+\varepsilon)}}{2^{l_2}2^{l_1/\lambda}}\cdot\frac{(2^{-k_1} 2^{-k_2})^{1/2}}{2^{-k_1} + 2^{-k_2} + N^{-1}}
\cdot
2^{2 l_1}\\
&\ll \frac{(\log N)^{4}}{N^{2+r-3\lambda-(\lambda+1)\varepsilon}}.
\end{align*}

\noindent \textbf{Term 6.}  {Sublemma 1} gives 
\begin{align*}
 S_{N,6}&:=\frac{1}{N^2}
\sum_{\substack{l_1, l_2 \\ N^{-\lambda} \ll 2^{-l_i} \ll N^{-r}}}
\sum_{\substack{k_1, k_2 \\ 2^{-k_i} \ll 2^{-l_{i}}}}
\frac{N^{(1+\frac{\lambda}{2})(1+\varepsilon)}}{2^{k_2/2}2^{l_1/\lambda}}\cdot\frac{(2^{-k_1} 2^{-k_2})^{1/2}}{2^{-k_1} + 2^{-k_2} + N^{-1}}
\cdot
2^{2 l_1}\\
&\ll \frac{(\log N)^{3}}{N^{2+\frac{r}{2}-\frac{5\lambda}{2}-(1+\frac{\lambda}{2})\varepsilon}}.
\end{align*}

\noindent \textbf{Term 7.} We estimate
\begin{align*}
 S_{N,7}&:=\frac{1}{N^2}
\sum_{\substack{l_1, l_2 \\ N^{-\lambda} \ll 2^{-l_i} \ll N^{-r}}}
\sum_{\substack{k_1, k_2 \\ 2^{-k_i} \ll 2^{-l_{i}}}}
\frac{N^{\frac{3\lambda}{2}(1+\varepsilon)}}{2^{k_2/2}2^{l_2}}\cdot\frac{(2^{-k_1} 2^{-k_2})^{1/2}}{2^{-k_1} + 2^{-k_2} + N^{-1}}\cdot 2^{2l_1}.
\end{align*}

\medskip
\noindent\textbf{Sublemma 2.}
\[
\sum_{\substack{k_1, k_2 \\ 2^{-k_i} \ll 2^{-l_{i}}}}
\frac{2^{-k_1/2} 2^{-k_2}}{2^{-k_1} + 2^{-k_2} + N^{-1}}
\ll \log N(2^{-l_1/2}+N^{-1}).
\]

\begin{proof}

As before, we split the sum into three cases.

\medskip
\noindent{Case 1:} \(2^{-k_1}> N^{-1}\) and \(2^{-k_2} > N^{-1}\).
We deduce 
\begin{align*}
  \sum_{\substack{k_1, k_2 \\ 2^{-k_i} \ll 2^{-l_{i}}}}
\frac{2^{-k_1/2} 2^{-k_2}}{2^{-k_1} + 2^{-k_2} + N^{-1}}
&\le \sum_{\substack{k_1\le k_2 \\ 2^{-k_i} \ll 2^{-l_{i}}}}
\frac{2^{-k_1/2} 2^{-k_2}}{2^{-k_1} + 2^{-k_2} + N^{-1}}+\sum_{\substack{k_1\ge k_2 \\ 2^{-k_i} \ll 2^{-l_{i}}}}
\frac{2^{-k_1/2} 2^{-k_2}}{2^{-k_1} + 2^{-k_2} + N^{-1}}\\
&\ll \sum_{\substack{k_1\le k_2 \\ 2^{-k_i} \ll 2^{-l_{i}}}} 2^{k_1/2}2^{-k_2}+\sum_{\substack{k_1\ge k_2 \\ 2^{-k_i} \ll 2^{-l_{1}}}}2^{-k_1/2}\\
&\asymp\sum_{k_1: 2^{-k_1} \ll 2^{-l_{1}}}2^{-k_1/2}+\log N\sum_{k_1: 2^{-k_1} \ll 2^{-l_{1}}}2^{-k_1/2}\\
&\asymp2^{-l_1/2}\log N.
\end{align*}

\medskip
\noindent{Case 2}: exactly one of \(2^{-k_1}, 2^{-k_2}\) is \(> N^{-1}\).
If \(2^{-k_1} > N^{-1} \ge 2^{-k_2}\), then \(k_1 < k_2\); if \(2^{-k_2} > N^{-1} \ge 2^{-k_1}\), then \(k_1 \ge k_2\). In either subcase, the argument as for Case 1 gives the summation is
\(
\ll 2^{-l_1/2} \log N.
\)

\medskip
\noindent{Case 3:} \(2^{-k_1}\le N^{-1}\) and \(2^{-k_2} \le N^{-1}\).

A direct calculation gives
\[\sum_{\substack{k_1, k_2 \\ 2^{-k_i} \le N^{-1}}}
\frac{2^{-k_1/2} 2^{-k_2}}{2^{-k_1} + 2^{-k_2} + N^{-1}}\le N\sum_{\substack{k_1, k_2 \\ 2^{-k_i} \le N^{-1}}}
2^{-k_1/2} 2^{-k_2}\asymp N^{-1/2}.\]
\end{proof}

Hence, by {Sublemma 2}, we have
\begin{align*}
    S_{N,7}&\ll \frac{N^{\frac{3\lambda}{2}(1+\varepsilon)}\log N}{N^2}\sum_{\substack{l_1, l_2 \\ N^{-\lambda} \ll 2^{-l_i} \ll N^{-r}}}
\frac{(2^{-l_1/2}+N^{-1})2^{2l_1}}{2^{l_2}}\\
& \ll \frac{N^{\frac{3\lambda}{2}(1+\varepsilon)}\log N}{N^2}N^{-r}\cdot (N^{3\lambda/2}+N^{2\lambda-1})\\
&\ll \frac{\log N}{N^{2+r-3\lambda(1+\varepsilon)}}.
\end{align*}

\noindent \textbf{Term 8.}  {Sublemma 2} gives
\begin{align*}
 S_{N,8}&:=\frac{1}{N^2}
\sum_{\substack{l_1, l_2 \\ N^{-\lambda} \ll 2^{-l_i} \ll N^{-r}}}
\sum_{\substack{k_1, k_2 \\ 2^{-k_i} \ll 2^{-l_{i}}}}
\frac{N^{(1+\frac{3\lambda}{2})(1+\varepsilon)}}{2^{l_1/\lambda}2^{k_2/2}2^{l_2}}\cdot\frac{(2^{-k_1} 2^{-k_2})^{1/2}}{2^{-k_1} + 2^{-k_2} + N^{-1}}\cdot 2^{2l_1}\\
&\ll \frac{N^{(1+\frac{3\lambda}{2})(1+\varepsilon)}\log N}{N^2}\sum_{\substack{l_1, l_2 \\ N^{-\lambda} \ll 2^{-l_i} \ll N^{-r}}}\frac{(2^{-l_1/2}+N^{-1})2^{2l_1}}{2^{l_1/\lambda}2^{l_2}}\\
&\ll \frac{N^{(1+\frac{3\lambda}{2})(1+\varepsilon)}\log N}{N^{2+r}}\sum_{l_1: N^{-\lambda} \ll 2^{-l_1} \ll N^{-r}}(2^{l_1(\frac{3}{2}-\frac{1}{\lambda})}+N^{-1}2^{l_1(2-\frac{1}{\lambda})})\\
&\ll \frac{N^{(1+\frac{3\lambda}{2})(1+\varepsilon)}\log N}{N^{2+r}}N^{\frac{3\lambda}{2}-1}=\frac{\log N}{N^{2+r-3\lambda-(1+\frac{3\lambda}{2})\varepsilon}}.
\end{align*}

To conclude, since  $\lambda=2/3+\varepsilon,$ we have
\[S_N=S_{N,1}+\cdots+S_{N,8}\ll N^{-\min\{ {r}/{2}-8\varepsilon, \, {2}/{3}-3\varepsilon\}}.\]

\end{proof}

By Lemma~\ref{var:2-2}, we have
\begin{multline*}
\frac{1}{N^2}
\sum_{\substack{(h_1, h_2) \in \mathcal B_N \\ 1 \le |h_1|, |h_2| \le N^{1+\varepsilon}}}
\widehat f\!\left(\frac{h_1}{N}\right)
\widehat f\!\left(\frac{h_2}{N}\right)
\frac{|v_1 v_2|}{v_1^2 + v_2^2 + N^{-1}}
\\
\times
\frac{1}{(|1 - z_{-h_2}| + N^{-1})^2}
\ll
N^{-\min\{r/2 - 8\varepsilon,\ 2/3 - 3\varepsilon\}}.
\end{multline*}
Applying \(ab \le \frac{1}{2}(a^2 + b^2)\) yields
\begin{multline*}
\frac{1}{N^2}
\sum_{\substack{(h_1, h_2) \in \mathcal B_N \\ 1 \le |h_1|, |h_2| \le N^{1+\varepsilon}}}
\widehat f\!\left(\frac{h_1}{N}\right)
\widehat f\!\left(\frac{h_2}{N}\right)
\frac{|v_1 v_2|}{v_1^2 + v_2^2 + N^{-1}}
\\
\times
\frac{1}{(|1 - z_{h_1}| + N^{-1})(|1 - z_{-h_2}| + N^{-1})}
\ll
N^{-\min\{r/2 - 8\varepsilon,\ 2/3 - 3\varepsilon\}}.
\end{multline*}
Since
\[
\frac{1}{|1 - z_{h_i}| + N^{-1}} \le \frac{1}{(|1 - z_{h_i}| + N^{-1})^2}
\quad (i = 1, 2),
\]
we also have
\begin{multline*}
\frac{1}{N^2}
\sum_{\substack{(h_1, h_2) \in \mathcal B_N \\ 1 \le |h_1|, |h_2| \le N^{1+\varepsilon}}}
\widehat f\!\left(\frac{h_1}{N}\right)
\widehat f\!\left(\frac{h_2}{N}\right)
\frac{|v_1 v_2|}{v_1^2 + v_2^2 + N^{-1}}
\\
\times
\left(
\frac{1}{|1 - z_{h_1}| + N^{-1}}
+
\frac{1}{|1 - z_{-h_2}| + N^{-1}}
\right)
\ll
N^{-\min\{r/2 - 8\varepsilon,\ 2/3 - 3\varepsilon\}}.
\end{multline*}
Combining these estimates with Lemma~\ref{cov-bad}, we obtain
\begin{align*}
\widetilde{\operatorname{Var}}^{\mathrm{off},2-2}_{\mathcal B}(R_N(f))
~=~&
\frac{1}{N^4}
\sideset{}{^*}\sum_{\substack{(h_1, h_2) \in \mathcal B_N \\ 1 \le |h_1|, |h_2| \le N^{1+\varepsilon}}}
\widehat f\!\left(\frac{h_1}{N}\right)
\widehat f\!\left(\frac{h_2}{N}\right)
C_N(h_1, h_2)
\\[6pt]
\ll~&
N^{-\min\{r/2 - 8\varepsilon,\ 2/3 - 3\varepsilon\}}.
\end{align*}
Hence
\[
\operatorname{Var}^{\mathrm{off},2-2}_{\mathcal B}(R_N(f))
\ll
N^{-\min\{r/2 - 8\varepsilon,\ 2/3 - 3\varepsilon\}}.
\]

Finally, we obtain
\begin{align} 
\operatorname{Var}^{\mathrm{off}}_{\mathcal B}(R_N(f))
~\ll~&
\sum_{i=1}^{4} \operatorname{Var}^{\mathrm{off},1-i}_{\mathcal B}(R_N(f))
+
\sum_{i=1}^{2} \operatorname{Var}^{\mathrm{off},2-i}_{\mathcal B}(R_N(f))\nonumber
\\[6pt]
\ll~&
N^{-1/3+12r}
+
N^{-\min\{r/2 - 8\varepsilon,\ 2/3 - 3\varepsilon\}}.
\label{var:ob}   
\end{align}

\section{Proof of Theorem \ref{thm:intro-smooth-moments}}

In Section~\ref{FM}, we prove that
\[
\mathbb{E} R_N(f) \longrightarrow \int_{\mathbb R} f(x) \dd x
\quad (N \to \infty).
\]
In Section~\ref{SM}, we show that for \(0 < \varepsilon < 1/18\),
\begin{align*}
\operatorname{Var}\bigl(R_N(f)\bigr)
&\ll_{f,\boldsymbol{\theta}}
\operatorname{Var}^{\mathrm{diag}}(R_N(f))
+
\operatorname{Var}^{\mathrm{off}}_{\mathcal G}(R_N(f))
+
\operatorname{Var}^{\mathrm{off}}_{\mathcal B}(R_N(f))
\\[6pt]
&\ll
N^{-1/3}
+
N^{-1+12r}
+
\left(
N^{-1/3+12r}
+
N^{-\min\{r/2 - 8\varepsilon,\ 2/3 - 3\varepsilon\}}
\right),
\end{align*}
where the last inequality follows from \eqref{var:diag}, Lemma~\ref{var:og}, and \eqref{var:ob}.
Choosing
\[
r \in \left(18\varepsilon,\ \frac{1/3 - \varepsilon}{12}\right),
\]
we obtain
\[
\operatorname{Var}\bigl(R_N(f)\bigr) \ll_{f,\boldsymbol{\theta}} N^{-\varepsilon}.
\]

\section{Proof of Theorem \ref{thm:ap}}

This section is devoted to the proof of Theorem~\ref{thm:ap}. We shall need a number-theoretic lower bound for products of fractional parts of algebraic numbers, which is a direct consequence of Schmidt's Subspace Theorem. The following lemma provides the key estimate that will be used repeatedly in the variance estimates below.

\begin{lemma}
\label{lem:arithmetic-multiplicative-algebraic}
Let $\alpha_1,\alpha_2$ be real algebraic numbers such that
\[
1,\ \alpha_1,\ \alpha_2
\]
are linearly independent over $\mathbb Q$. Then, for every $\varepsilon > 0$, there exists a constant $c = c(\alpha_1, \alpha_2, \varepsilon) > 0$ such that
\begin{equation*}
\|q\alpha_1\| \, \|q\alpha_2\| \ge c |q|^{-1-\varepsilon}
\quad \text{for all } q \in \mathbb Z \setminus \{0\}.
\end{equation*}
\end{lemma}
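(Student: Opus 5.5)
We derive the lemma from Schmidt's Subspace Theorem in its standard form for linear forms with algebraic coefficients. For \(\mathbf{x}=(x_0,x_1,x_2)\in\mathbb Z^3\) we use the three linear forms
\[
L_1(\mathbf{x})=x_0,\qquad L_2(\mathbf{x})=x_0\alpha_1-x_1,\qquad L_3(\mathbf{x})=x_0\alpha_2-x_2 .
\]
They are linearly independent and have real algebraic coefficients. Given \(q\neq0\), let \(p_1,p_2\) be the integers nearest to \(q\alpha_1,q\alpha_2\), and set \(\mathbf{x}=(q,p_1,p_2)\). Then
\[
|L_1L_2L_3(\mathbf{x})|=|q|\,\|q\alpha_1\|\,\|q\alpha_2\|,\qquad \|\mathbf{x}\|_\infty\asymp|q|,
\]
where the second relation uses \(|p_i|\le|q||\alpha_i|+1\). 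So a failure of the lemma for some \(\varepsilon\), along infinitely many \(q\), yields infinitely many integer points with \(|L_1L_2L_3(\mathbf{x})|<\|\mathbf{x}\|^{-\varepsilon'}\) for some \(\varepsilon'>0\) comparable to \(\varepsilon\). By the Subspace Theorem, all such points lie in finitely many proper rational subspaces of \(\mathbb Q^3\).

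The main step is to rule out each of these subspaces. Suppose infinitely many of our points lie in a plane \(a_0x_0+a_1x_1+a_2x_2=0\) with \((a_0,a_1,a_2)\in\mathbb Z^3\setminus\{0\}\). Substituting \(x_i=q\alpha_i-\eta_i\) with \(|\eta_i|=\|q\alpha_i\|\le1/2\) gives
\[
q\,(a_0+a_1\alpha_1+a_2\alpha_2)=a_1\eta_1+a_2\eta_2 .
\]
The right-hand side is bounded, while \(a_0+a_1\alpha_1+a_2\alpha_2\neq0\) by the linear independence of \(1,\alpha_1,\alpha_2\). Hence \(|q|\) is bounded, so only finitely many \(q\) can occur in each subspace. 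This is exactly where the hypothesis of the lemma is used.

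It follows that the inequality \(\|q\alpha_1\|\|q\alpha_2\|<|q|^{-1-\varepsilon}\) has only finitely many solutions \(q\). For each of these finitely many \(q\), the product \(\|q\alpha_1\|\|q\alpha_2\|\) is nonzero because \(\alpha_1,\alpha_2\) are irrational. Shrinking the constant \(c\) to cover these \(q\) gives the bound for all \(q\neq0\), with \(c=c(\alpha_1,\alpha_2,\varepsilon)\).

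The genuinely hard input is the Subspace Theorem itself, which we quote rather than prove. We expect the only delicate points to be bookkeeping: the passage from \(\varepsilon\) to \(\varepsilon'\) through \(\|\mathbf{x}\|\asymp|q|\), and making sure the reduction covers \(q\) of both signs (the case \(q<0\) follows from \(\|{-q}\alpha_i\|=\|q\alpha_i\|\)).
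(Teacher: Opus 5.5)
Your proposal is correct and follows the paper's proof: it uses the same three linear forms, the same integer points $(q,p_1(q),p_2(q))$, and the Subspace Theorem, and it rules out each rational subspace via the linear independence of $1,\alpha_1,\alpha_2$. Your explicit bound $|q|\le (|a_1|+|a_2|)/(2|a_0+a_1\alpha_1+a_2\alpha_2|)$ is a quantitative version of the paper's divide-by-$q$-and-take-limits step, and you also spell out the non-vanishing of $\|q\alpha_1\|\,\|q\alpha_2\|$ for the finitely many exceptional $q$, which the paper uses tacitly when adjusting the constant.
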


\begin{proof}
We now apply Schmidt's Subspace Theorem \cite{S89} to prove the required lower bound. Recall that the theorem states the following: if $L_0, \dots, L_{n-1}$ are linearly independent linear forms in $n$ variables with algebraic coefficients, then for any $\varepsilon > 0$, the integer solutions $\mathbf{x} \in \mathbb Z^n \setminus \{0\}$ satisfying
\[
|L_0(\mathbf{x}) \cdots L_{n-1}(\mathbf{x})| < |\mathbf{x}|^{-\varepsilon}
\]
lie in a finite union of proper rational subspaces of $\mathbb R^n$.

For \(i=1,2\), choose
\(p_i(q)\in\mathbb Z\) such that
\[
|q\alpha_i-p_i(q)|
=
\|q\alpha_i\|.
\]
Define the linearly independent algebraic linear forms
\[
L_0(x_0, x_1, x_2) = x_0,~
L_1(x_0, x_1, x_2) = \alpha_1 x_0 - x_1, ~ 
L_2(x_0, x_1, x_2) = \alpha_2 x_0 - x_2,
\]
and set
\[
\mathbf{x}_q := (q, p_1(q), p_2(q)) \in \mathbb Z^3.
\]
A direct computation gives
\begin{equation*}
\prod_{i=0}^{2} |L_i(\mathbf{x}_q)| = q \, \|q\alpha_1\| \, \|q\alpha_2\|.
\end{equation*}
Moreover, we claim that
\[
H(\mathbf{x}_q) := \max\{1, q, |p_1(q)|, |p_2(q)|\} \asymp_{\alpha_1, \alpha_2} q.
\]
To prove the claim, we assume for contradiction that for infinitely many $q$,
\[
\prod_{i=0}^{2}|L_i(\mathbf x_q)| < H(\mathbf{x}_q)^{-\varepsilon}
\]
for some $\varepsilon > 0$. Then the Schmidt's Subspace Theorem ($n=3$)
implies that all such vectors $\mathbf{x}_q$ lie in a finite union of proper rational subspaces of $\mathbb R^3$. Hence at least one proper rational subspace contains infinitely many of the $\mathbf{x}_q$.
We   show that this is impossible. 

Suppose there exist integers $A, B, C$, not all zero, such that
\[
A q + B p_1(q) + C p_2(q) = 0
\]
for infinitely many $q$. Dividing by $q$ and taking the limit along this infinite subsequence, we obtain
\[
A + B \lim_{q  } \frac{p_1(q)}{q} + C \lim_{q } \frac{p_2(q)}{q} = 0.
\]
Since $p_i(q)/q \to \alpha_i$, this gives
\[
A + B\alpha_1 + C\alpha_2 = 0,
\]
contradicting the assumed linear independence of $1, \alpha_1, \alpha_2$ over $\mathbb Q$. 

Consequently, for all sufficiently large $q$,
\begin{equation*}
q \, \|q\alpha_1\| \|q\alpha_2\| \gg_{\alpha_1, \alpha_2, \varepsilon} q^{-\varepsilon}.
\end{equation*}
The finitely many remaining small $q$ are handled by adjusting the implicit constant, and the case of negative $q$ follows from the symmetry $\| - q\alpha_i\|= \|q\alpha_i\|$. 
\end{proof}

Apply Lemma~\ref{lem:arithmetic-multiplicative-algebraic} with
\[
\alpha_1=\delta,\qquad \alpha_2=\sigma.
\]
For every \(\varepsilon>0\), we obtain
\begin{equation*}
X_qY_q:=\|q\delta\|\,\|q\sigma\|
\gg_{\sigma,\delta,\varepsilon}|q|^{-1-\varepsilon},
\end{equation*}
for all \(q\in\mathbb{Z}\setminus\{0\}\).

Since \(1,\sigma,\delta\) are linearly independent over \(\mathbb{Q}\), at least one of \(\sigma\) and \(\delta\) must be irrational. Without loss of generality, assume that \(\delta\) is irrational. Then \(X_q>0\) and
\[
Y_q\gg_{\sigma,\delta,\varepsilon}\frac{|q|^{-1-\varepsilon}}{X_q}.
\]

For arbitrary integers \(m,n\in\mathbb{Z}\), we have the elementary estimates
\[
|2h\delta-n|\ge X_{2h},\qquad
\left|h\sigma-\frac{n}{2}-m\right|
=\frac{1}{2}\bigl|2h\sigma-(n+2m)\bigr|
\ge\frac{1}{2}Y_{2h}.
\]
Hence, by the definition of \(\rho_h(\boldsymbol{\theta})\),
\begin{equation}
\rho_h(\boldsymbol{\theta})
\ge X_{2h}^2+\frac{1}{2}Y_{2h}
\gg_{\sigma,\delta,\varepsilon}
X_{2h}^2+\frac{|h|^{-1-\varepsilon}}{2X_{2h}}.
\label{eq:rho-lower-bound-XY}
\end{equation}

Now consider the function
\[
\phi(x)=x^2+\frac{L}{2x},\qquad x>0,
\]
where \(L>0\) is a constant. A simple computation shows that \(\phi\) attains its minimum at \(x^3=L/4\), with
\[
\min_{x>0}\phi(x)=3\cdot 2^{-4/3}L^{2/3}.
\]
Applying this with \(x=X_{2h}\) and \(L=|h|^{-1-\varepsilon}\), we obtain from \eqref{eq:rho-lower-bound-XY} that
\[
\rho_h(\boldsymbol{\theta})
\gg_{\sigma,\delta,\varepsilon}
|h|^{-\frac{2}{3}(1+\varepsilon)}.
\]
Thus \(\boldsymbol{\theta}\) is of \(2/3\)-Diophantine type. This completes the proof of Theorem~\ref{thm:ap}.

\section*{Appendix: Proofs for Subsection   \ref{subsec:DC}}

We now present the proofs  deferred from Subsection \ref{subsec:DC}.

\begin{proof}[Proof of Theorem \ref{thm:local}] 

Near a point at which \(Z(\mathbf{x})\) is close to \(1\), only two possibilities can occur: either both \(e(x)\) and \(\cos(2\pi y)\) are close to \(1\), or both are close to \(-1\). These correspond, respectively, to \(\mathbf{x}=(x,y)\in\mathbb{T}^2\) lying close to \(\mathbf{k}_1\) or to \(\mathbf{k}_2\).

We first consider a neighborhood of the exceptional point \(\mathbf{k}_1=(0,0)\). By the Taylor expansions
\[
e(x)=1+2\pi i x+o(x),\qquad
\cos(2\pi y)=1-2\pi^2 y^2+o(y^2),
\]
we have, for sufficiently small \(\mathbf{x}=(x,y)\),
\begin{equation}\label{loc-asy}
|1-Z(\mathbf{x})|\asymp |x|+y^2<2\eta_{\mathrm{loc}}.
\end{equation}
Choose \(\eta_{\mathrm{loc}}>0\) small enough so that this estimate holds whenever \(\mathbf{x}\) lies in the corresponding \(\eta_{\mathrm{loc}}\)-neighborhood of \(\mathbf{k}_1\); a similar estimate holds near \(\mathbf{k}_2\) by translating the origin to that point.

Now put \(\mathbf{x}=h\boldsymbol{\theta}=(h\sigma,h\delta)\). If \(\mathbf{x}\) is close to \(\mathbf{k}_1\), then
\[
|1-Z(h\boldsymbol{\theta})|
\asymp \min_{m,n\in\mathbb{Z}}\left\{|h\sigma-m|+|h\delta-n|^2\right\}.
\]
Since \(|h\delta-n|^2=\frac14|2h\delta-2n|^2\), and the factor \(1/4\) is absorbed into the comparison constants, this is equivalent to
\[
|1-Z(h\boldsymbol{\theta})|
\asymp \min_{m,n\in\mathbb{Z}}\left\{|h\sigma-m|+|2h\delta-2n|^2\right\}.
\]
Making the change of variables \(m'=m-n\), this becomes
\[
|1-Z(h\boldsymbol{\theta})|
\asymp \min_{m',n\in\mathbb{Z}}\left\{\left|h\sigma-\frac{2n}{2}-m'\right|+|2h\delta-2n|^2\right\}.
\]
Similarly, if \(\mathbf{x}\) is close to \(\mathbf{k}_2\), we have
\[
|1-Z(h\boldsymbol{\theta})|
\asymp \min_{m,n\in\mathbb{Z}}\left\{\left|h\sigma-m-\frac12\right|+\left|h\delta-n-\frac12\right|^2\right\},
\]
which, after replacing the odd integer by \(2n+1\) and shifting \(m\) appropriately, yields
\[
|1-Z(h\boldsymbol{\theta})|
\asymp \min_{m,n\in\mathbb{Z}}\left\{\left|h\sigma-\frac{2n+1}{2}-m\right|+|2h\delta-(2n+1)|^2\right\}.
\]
Combining the even and odd cases for the integer \(n\) yields precisely \(\rho_h(\boldsymbol{\theta})\).

Hence, in a sufficiently small neighborhood of \(\mathcal{K}\),
\[
|1-Z(h\boldsymbol{\theta})|\asymp \rho_h(\boldsymbol{\theta}),
\]
with constants depending only on the choice of \(\eta_{\mathrm{loc}}\). 
\end{proof}

\begin{proof}[Proof of Theorem \ref{LI}]
Suppose, for contradiction, that there exist \(q_1,q_2\in\mathbb{Z}\), not both zero, satisfying
\[
q_1\sigma+q_2\delta\in\mathbb{Z}.
\]
Let \(d=\gcd(|q_1|,|q_2|)\) and write \(q_1=da\), \(q_2=db\) with \(\gcd(a,b)=1\). Then
\[
d(a\sigma+b\delta)\equiv0\pmod1.
\]
Since \((a,b)\) is primitive, this implies that there exists \(t\in\mathbb{T}\) such that
\[
d\boldsymbol{\theta}\equiv(bt,-at)\pmod1.
\]
If \(t\) were rational, then \(kd\boldsymbol{\theta}\equiv\mathbf{0}\pmod1\) for some \(k\ge1\), giving \(\rho_{kd}(\boldsymbol{\theta})=0\), contradicting the Diophantine condition, which requires \(\rho_h(\boldsymbol{\theta})>0\) for all \(h\neq0\). Hence \(t\) is irrational.

By Dirichlet's approximation theorem, there exist infinitely many integer pairs \((k,\ell_k)\) such that
\[
|kt-\ell_k|\le k^{-1}.
\]
Set \(h=dk\). Choose integers \(r,s\) with
\[
d\sigma=bt+r,\qquad d\delta=-at+s.
\]
Then, with \(h=dk\),
\[
h\sigma=b\ell_k+b\varepsilon_k+kr,\qquad
h\delta=-a\ell_k-a\varepsilon_k+ks,
\]
where \(\varepsilon_k:=kt-\ell_k\) satisfies \(|\varepsilon_k|\le k^{-1}\).

In the definition of \(\rho_h\), take
\[
n_k=-2a\ell_k+2ks,\qquad
m_k=(a+b)\ell_k+k(r-s).
\]
A direct substitution yields
\[
2h\delta-n_k=-2a\varepsilon_k,\qquad
h\sigma-\frac{n_k}{2}-m_k=b\varepsilon_k.
\]
Hence
\[
\rho_h(\boldsymbol{\theta})
\le 4a^2\varepsilon_k^2+|b|\,|\varepsilon_k|
\ll_{a,b} k^{-1}
\ll_d h^{-1}.
\]
Since this holds for infinitely many \(h\), the \(\lambda\)-Diophantine condition would imply
\[
c_{\boldsymbol{\theta}}h^{-\lambda}\ll h^{-1},
\]
which is impossible for infinitely many arbitrarily large \(h\) when \(\lambda<1\).
\end{proof}

\begin{proof}[Proof of Theorem \ref{full}]
For $\varepsilon_0 \in \mathbb{Q}^{+}$ (the set of positive rational numbers), define the auxiliary set
\[
D_\lambda^*(\varepsilon_0) := \left\{ \boldsymbol{\theta} \in \mathbb{T}^2 : \rho_h(\boldsymbol{\theta}) \ge |h|^{-(\lambda+\varepsilon_0)} \text{ for all } h \in \mathbb{Z}\setminus\{0\} \right\}.
\]
It is straightforward to verify that
\[
\bigcap_{\varepsilon_0 \in\mathbb{Q}^{+}} D_\lambda^*(\varepsilon_0) \subset D_\lambda.
\]
Hence it suffices to show that $\bigcap_{\varepsilon_0\in\mathbb{Q}^{+}} D_\lambda^*(\varepsilon_0)$ has full Lebesgue measure, or equivalently, that the complement of $D_\lambda^*(\varepsilon_0)$ has measure zero for each $\varepsilon_0 \in\mathbb{Q}^{+}$.

To this end, write the complement as
\begin{align*}
D_\lambda^*(\varepsilon_0)^c
&= \left\{ \boldsymbol{\theta} \in \mathbb{T}^2 : \rho_h(\boldsymbol{\theta}) < |h|^{-(\lambda+\varepsilon_0)} \text{ for infinitely many } h \in \mathbb{Z}\setminus\{0\} \right\} \\
&= \bigcap_{H=1}^{\infty} \bigcup_{\substack{h \in \mathbb{Z} \\ |h| \ge H}} \mathcal{E}_h,
\end{align*}
where
\[
\mathcal{E}_h := \left\{ \boldsymbol{\theta} \in \mathbb{T}^2 : \rho_h(\boldsymbol{\theta}) < |h|^{-(\lambda+\varepsilon_0)} \right\}.
\]
We now estimate the Lebesgue measure of $\mathcal E_h$. By Theorem~\ref{thm:local}, for sufficiently large $|h|$,
\[
\mathcal{E}_h \subset
\left\{ \boldsymbol{\theta} \in \mathbb{T}^2 : \|h\sigma\| + \|h\delta\|^2 \ll |h|^{-(\lambda+\varepsilon_0)} \right\}
\cup
\left\{ \boldsymbol{\theta} \in \mathbb{T}^2 : \left\|h\sigma-\frac12\right\| + \left\|h\delta-\frac12\right\|^2 \ll |h|^{-(\lambda+\varepsilon_0)} \right\}.
\]
Consider the first set in the above union. On the torus, the conditions $\|h\sigma\| < r$ and $\|h\delta\| < r^{1/2}$ define regions of measure $\asymp r$ and $\asymp r^{1/2}$, respectively. Hence its measure is
\[
\ll |h|^{-3(\lambda+\varepsilon_0)/2}.
\]
The second set has the same measure as the first, since it is obtained by a translation on the torus. Therefore,
\[
\operatorname{Leb}(\mathcal{E}_h) \ll |h|^{-3(\lambda+\varepsilon_0)/2}.
\]

Since $\lambda \ge 2/3$, we have
\[
\sum_{h \in \mathbb{Z}\setminus\{0\}} \operatorname{Leb}(\mathcal{E}_h)
\ll \sum_{h \neq 0} |h|^{-3(\lambda+\varepsilon_0)/2}
< \infty.
\]
The first Borel--Cantelli lemma then implies that
\[
\operatorname{Leb}\left( D_\lambda^*(\varepsilon_0)^c \right) = 0.
\]
\vskip -.8cm
\end{proof}

\bigskip
\noindent {\bf  Acknowledgements.}  This work is supported by National Key R$\&$D Program of China (No. 2024YFA1013700) and NSFC (Nos. 12671113 and 12331005), and the Fundamental Research Funds for the Central Universities.

\bigskip

\noindent School of Mathematics and Statistics, Huazhong University of Science and Technology, Wuhan 430074, China.

\noindent\textit{Email address:} \texttt{tanbo@hust.edu.cn}

\bigskip

\noindent School of Mathematics and Statistics, Huazhong University of Science and Technology, Wuhan 430074, China.

\noindent\textit{Email address:} \texttt{zhangzhen\_@hust.edu.cn}
\bigskip

\noindent School of Mathematics and Statistics, Wuhan University of Technology, Wuhan 430070, China.

\noindent\textit{Email address:} \texttt{zhouql@whut.edu.cn}

\end{document}